\numberwithin{equation}{section}
\definecolor{citegreen}{rgb}{0,0.6,0}
\definecolor{refred}{rgb}{0.8,0,0}
\newcommand{\R}{\mathbb{R}}
\newcommand{\N}{\mathbb{N}}
\newcommand{\Sph}{\mathbb{S}}
\newcommand{\pa}{\partial}
\newcommand{\Om}{\Omega}
\newcommand{\ep}{\varepsilon}
\newcommand{\rmd}{{\rm d}}
\newcommand{\umax}{u_{{\rm max}}}
\newcommand{\D}{\nabla}
\newcommand{\DD}{\nabla^2}
\newcommand{\De}{\Delta}
\newcommand{\na}{\nabla}
\newcommand{\nana}{\nabla^2}
\mathchardef\emptyset="001F
\definecolor{vgreen}{rgb}{0.1,0.5,0.2}
\definecolor{viola}{RGB}{85,26,139}
\renewcommand{\theequation}{\thesection.\arabic{equation}}
\newtheorem{theorem}{Theorem}[section]
\newtheorem{remark}{Remark}
\newtheorem{definition}{Definition}
\newtheorem{proposition}[theorem]{Proposition}
\newtheorem{notation}{Notation}
\newtheorem{normalization}{Normalization}
\newtheorem{lemma}[theorem]{Lemma}
\newtheorem{thmx}{Theorem}
\begin{document}

\title[On the Serrin problem for ring-shaped domains]
{On the Serrin problem for ring-shaped domains}

\author[V.~Agostiniani]{Virginia Agostiniani}
\address{V.~Agostiniani, Universit\`a degli Studi di Trento,
via Sommarive 14, 38123 Povo (TN), Italy}
\email{virginia.agostiniani@unitn.it}

\author[S.~Borghini]{Stefano Borghini}
\address{S.~Borghini, Universit\`a degli Studi di Milano-Bicocca, via Roberto Cozzi 55,
20126 Milano (MI), Italy}
\email{stefano.borghini@unimib.it}

\author[L.~Mazzieri]{Lorenzo Mazzieri}
\address{L.~Mazzieri, Universit\`a degli Studi di Trento,
via Sommarive 14, 38123 Povo (TN), Italy}
\email{lorenzo.mazzieri@unitn.it}

\begin{abstract} 
In this paper, we deal with the long standing open problem of characterising rotationally symmetric solutions to $\Delta u = - 2$, when Dirichlet boundary conditions are imposed on a {\em ring-shaped} planar domain. From a physical perspective, the solution represents the velocity of a homogeneous incompressible fluid, flowing in steady parallel streamlines
%laminar flow 
through a {\em hollow} cylindrical pipe and obeying a no-slip condition. In contrast with Serrin's classical result, we show that the simplest possible set of overdetermining conditions, namely the prescription of {\em locally constant} Neumann boundary data, is not sufficient to obtain a complete characterisation of the solutions. A further requirement on the number of maximum points arises in our analysis as a necessary and sufficient condition for the rotational symmetry. 
In fluid-dynamical terms, our results imply that if the wall shear stress is constant on some connected component of the pipe's wall, then the velocity of the fluid must attain its maximal value only at finitely many streamlines, unless the hollow pipe itself consists of a couple of concentric cylindrical round tubes. A major difficulty in the analysis of this problem comes from the lack of monotonicity of the model solutions, which makes the moving plane method ineffective. To remedy this issue, we introduce some new arguments in the spirit of comparison geometry, that we believe of independent interest.
\end{abstract}

\maketitle

%\vspace{5pt}

\noindent\textsc{MSC (2010):
	35N25,
	%overdetermined bvp
	\!35R35,
	% free boundary	
%	\!31B15,
%	%potentials and capacities, extremal length 
	\!35B06,
	% pde - symmetries, invariants of pdes	
	\!53C21.
	%methods of Riem Geom (including pdes method)
}

\noindent{\underline{Keywords}: partially overdetermined problems, free boundary problems, comparison geometry.} 

\vspace{5pt}

%
%{\color{blue}
%INTRODUZIONE ALTERNATIVA
%
%In an outstanding paper, Serrin studied the problem 
%\begin{equation}
%\label{eq:prob_SD}
%\begin{dcases}
%\Delta u=-2 & \quad\mbox{in } \Om,\\
%\!\quad u=0 & \quad\mbox{on }\partial \Om.
%\end{dcases}
%\end{equation}
%showing that, under the hypothesis
%
%
%}

%%%%%%%%%%%%%%%%%%%%%%%%%%%%%%%%%%%
%%%%%%%%%%%%%%%%%%%%%%%%%%%%%%%%%%%
%%%%%%%%%%%%%%%%%%%%%%%%%%%%%%%%%%%

\section{Introduction and statement of the main results}
\label{sez_1}

%%%%%%%%%%%%%%%%%%%%%%%%%%%%%%%%%%%
%%%%%%%%%%%%%%%%%%%%%%%%%%%%%%%%%%%
%%%%%%%%%%%%%%%%%%%%%%%%%%%%%%%%%%%

%\subsection{A famous overdetermined boundary value problem.} 

In this paper, we study pairs $(\Om,u)$, where 
$\Om\subset\R^2$ is an open bounded domain with smooth boundary and $u$ is the unique solution to the  Dirichlet problem 
\begin{equation}
\label{eq:prob_SD}
\begin{dcases}
\Delta u=-2 & \quad\mbox{in } \Om,\\
\!\quad u=0 & \quad\mbox{on }\partial \Om.
\end{dcases}
\end{equation}
A classical result due to Serrin (\cite{Serrin}, see also~\cite{Weinberger}) states that, if a pair $(\Omega,u)$ solves~\eqref{eq:prob_SD}, 
and the normal derivative of $u$ at $\pa\Omega$ is constant, then necessarily $\Omega$ is a ball and $u$ is rotationally symmetric. In this case, up to translations and rescaling, the solution is given by 
\begin{equation}
\label{eq:serrin_ST}
\Om_0=B(0,1),
\qquad\qquad
u_0(x)\,=\,\frac{1-|x|^2}{2}.
\end{equation}
For future convenience, we observe that the function $u_0$ achieves its maximal value  
$(u_0)_{\rm max}=1/2$ at the origin, and that 
\begin{equation*}
|\D u_0|^2 \,\, \equiv \,\, 1,   \qquad \hbox{on} \quad \pa \Om_0 \, .
\end{equation*} 
In the following, the couple $(\Om_0, u_0)$, will be referred to as {\em Serrin's solution} to problem~\eqref{eq:prob_SD}. As pointed out in Serrin's original paper~\cite{Serrin}, problem~\eqref{eq:prob_SD} has a nice and insightful fluid-dynamical interpretation. In fact, $u$ can be thought as the velocity of a homogeneous incompressible fluid, flowing in steady laminar flow through a cylindrical pipe of cross section $\Om$, obeying the no-slip condition $u=0$ at the pipe's wall. The normal derivative of $u$ at $\pa \Om$, is also relevant from this point of view, since it is related to the so called {\em wall shear stress}, here denoted by $\tau$, through the formula
\begin{equation*}
\tau \, = \, \mu \, |\nabla u| \,, \qquad \hbox{on} \quad \pa\Om \, .
\end{equation*}
For the sake of simplicity, the dynamic viscosity $\mu$ of the fluid will be assumed to be constantly equal to $1$ throughout the paper, so that the value of $|\nabla u|$ at $\pa \Om$ will be identified with the wall shear stress (WSS for short) exerted by the fluid on the pipe's wall. In such a framework, Serrin's result says that {\em``If the WSS assumes the same value at every point of the boundary, then the pipe's cross section must be a $2$-dimensional round ball, so that the pipe itself must have the geometry of a round cylinder"}.
%\medskip
%\noindent {\em ``If the WSS assumes the same value at every point of the boundary, then the pipe's cross section must be a round ball"}
%\medskip

To introduce in more details the problem of interest here and state our theorems, let us draw the reader's attention on a couple of remarkable features of Serrin's result. The first one is that the {\em connectedness} of the boundary is definitely not an assumption of the theorem. It is instead a consequence of the overdetermining condition
\begin{equation}
\label{eq:overneumann}
 `` \,\, |\nabla u| \,\, 
 {\hbox{is constant on $\pa \Om$ ".}} 
\end{equation}
In other words, the above requirement is strong enough to impose an extremely stringent prescription for the topology of $\pa \Om$. The second feature that we would like to underline is that condition~\eqref{eq:overneumann} also provides the solution $u$ with an extra-property. Indeed, it turns out that $u$ is not only rotationally symmetric, but also {\em monotonically decreasing} with respect to the radial variable.

\subsection{A multiplicity result.} Since the primary aim of the present work is classifying rotationally symmetric solutions to~\eqref{eq:prob_SD}, both the connectedness of the boundary and the monotonicity of the solution should be regarded as much stronger conclusions than desired. Indeed, it is elementary to observe that, whenever $\Om$ is bounded, rotationally symmetric solutions to~\eqref{eq:prob_SD} are completely described, up to translations and rescaling, by the following family of {\em ring-shaped model solutions}, with $0<R<1$:
\begin{equation}
\label{eq:ST}
\Om_R\,=\, \{ r_i(R) < |x| < 1\} \, ,
%B(0,1)\setminus\overline{B(0,r_i(R))}\,,
\qquad\qquad
%[r_m,1]\times\Sph^{n-1}\,,\qquad 
u_R(x)\,=\,
\frac{1-|x|^2}{2}+R^2\log{|x|}\,,
\end{equation}
where the {\em inner radius} $0<r_i(R)<R$ is the smallest positive 
zero of the function 
\begin{equation*}
(0,+\infty)\ni\rho\longmapsto 1-\rho^2+2R^2\log\rho \, .
\end{equation*}
It is evident (see~\figurename~\ref{fig:profiles}) that the functions $u_R$'s are not monotonically decreasing in $|x|$, and that $\pa\Om_R$ is not connected. The parameter $R$, that we have chosen to describe the above family, will be referred to as the {\em core radius} of the ring-shaped model solution $(\Om_R, u_R)$. It is a natural choice, since the maximum points of $u_R$ -- here denoted by ${\rm MAX}(u_R)$ -- are precisely located at the circle of radius $R$. In particular, we have that
\begin{equation}
\label{eq:umax_ST}
(u_R)_{\rm max}\,=\,
\frac{1-R^2}2+R^2\log R\qquad \hbox{and}\qquad
{\rm MAX}(u_R)=\big\{|x|=R\big\},
\end{equation}
so that
$
\Om_R\setminus{\rm MAX}(u_R)\,=\,\Om_{R,i}\sqcup \Om_{R,o}\,,
$
where 
\begin{equation}
\label{eq:Om+-_ST}
\Om_{R,i}\,=\, \{ r_i(R) < |x| <R \}
%B(0,R)\setminus\overline{B(0,r_i(R))}
\qquad \hbox{and}\qquad
\Om_{R,o}\,=\, \{ R < |x| < 1 \} \, .
%B(0,1)\setminus\overline{B(0,R)}.
%\Om_-=\big[r_m,\sqrt{m}\big)\times \Sph^{1}\,,\qquad %\Om_+=\big(\sqrt{m},1\big]\times \Sph^{1}\,.
\end{equation}
To complete the description of this fundamental family of solutions, let us observe that
\begin{equation}
\label{eq:surfacegravity_ST}
%\begin{array}{ll}
|\nabla u_R|\equiv\displaystyle\frac{R^2- r_i^2(R)}{r_i(R)}, \quad \hbox{on $\Gamma_{R,i}$} \qquad \hbox{and} \qquad |\nabla u_R|\equiv1-R^2, \quad \hbox{on $\Gamma_{R,o}$} \,,
%\displaystyle\Gamma_i&=\,\{|x|=r_i(R)\}
%\,=\,
%\left\{|\nabla u|\equiv\displaystyle\frac{r_i^2(R)-R^2}{r_i(R)}\right\},\\
%\Sigma&=\,\{|x|=R\}
%\,=\,\big\{|\nabla u|\equiv0\big\},\phantom{\displaystyle\Bigg\{}\\
%\displaystyle\Gamma_o&=\,\mathbb S^1
%\,=\,
%\big\{|\nabla u|\equiv1-R^2\big\}.\phantom{\Bigg\{}
%\end{array}
\end{equation}
where $\Gamma_{R,i}=\{|x|=r_i(R)\}$ and $\Gamma_{R,o}=\{|x|= 1\}$ respectively denote the {\em inner} and the {\em outer boundary} of $\Om_R$, so that $\pa \Om_R = \Gamma_{R,i} \sqcup \Gamma_{R,o}$. Notice that Serrin's solution can be recovered as the singular limit of the ring-shaped model solutions, as $R \to 0^+$. Indeed, it can be proven that the functions $u_R$'s converge smoothly to $u_0$ on the compact subsets of $B(0,1) \setminus \{0\}$, so that both the sets ${\rm MAX}(u_R)$ and $\Gamma_{R,i}$ are collapsing onto the origin, as $R \to 0^+$ (see~\figurename~\ref{fig:profiles}).
\begin{figure}
\centering
\includegraphics[scale=0.5]{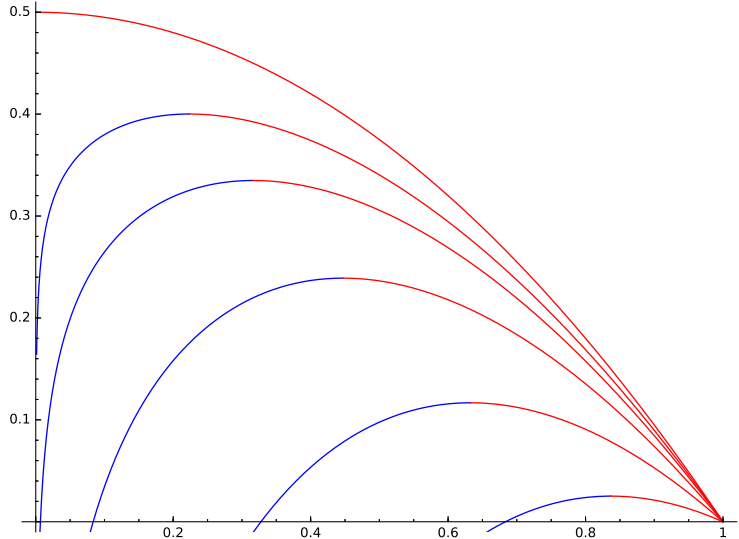}
\caption{The above diagram displays, according to our normalisation, a comparative view of the profiles of the ring-shaped model solutions $u_R$'s, with $0<R<1$, culminating in the profile of Serrin's solution, for $R=0$. The blue and the red parts of the graphs refer to the zones where the profiles are respectively monotonically increasing and decreasing, whereas the parameter $R$ identifies the unique critical point of each profile. It is immediate to notice that for Serrin's solution only the decreasing regime is present, the only critical point being located at the origin.}
\label{fig:profiles}
\end{figure}

Having this picture in mind, it would be tempting to guess that a Serrin-type characterisation of ring-shaped model solutions~\eqref{eq:ST} holds, 
%at least 
provided 
%the given domain $\Om$ has precisely two boundary components (ring-shaped domain) and 
the overdetermining condition~\eqref{eq:overneumann} is replaced by the requirement that
\begin{equation}
\label{eq:loc_overneumann}
 `` \,\, |\nabla u| \,\,
 {\hbox{is locally constant on $\pa \Om$ ".}} 
\end{equation} 
For the sake of exposition, we focus our attention on the case of {\em ring-shaped domains}, i.e., bounded domains whose boundary has precisely two connected components. To fix the notation, we agree that 
\begin{equation}
\label{eq:gammas}
\pa \Om \, = \, \Gamma_i \sqcup \Gamma_o \, ,
\end{equation}
where $\Gamma_i$ and $\Gamma_o$ denote the inner and the outer connected component of the boundary, respectively.
If $(\Om, u)$ is a solution to problem~\eqref{eq:prob_SD}, we further agree that $\{u=0\}= \pa \Om$ and, for future convenience we set 
\begin{equation*}
 \umax \,\, = \,\, \max_\Om u  \qquad \hbox{and} \qquad {\rm MAX}(u)=\{p\in \Om \, : \, u(p)=\umax\} \, .
\end{equation*}
Our first main result states that, even for ring-shaped domains, condition~\eqref{eq:loc_overneumann} is not strong enough to force the rotational symmetry of $(\Om, u)$.
\begin{thmx}
\label{thm:bifurcation}
There exist infinitely many solutions $(\Om,u)$ to problem~\eqref{eq:prob_SD}, defined on a ring-shaped domain $\Om$, that are not rotationally symmetric and such that $|\D u|$ is locally constant on $\pa\Omega$. 
\end{thmx}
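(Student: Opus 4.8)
The plan is to produce these solutions by a bifurcation argument from the family of ring-shaped model solutions $(\Om_R,u_R)$ of \eqref{eq:ST}. Fix $\alpha\in(0,1)$ and an integer $k\ge 2$, and let $C^{m,\alpha}_k(\Sph^1)$ be the space of $C^{m,\alpha}$ functions on $\Sph^1$ that have zero average and are invariant under the dihedral group of order $2k$; equivalently, $C^{m,\alpha}_k(\Sph^1)$ is the closure of the span of the modes $\cos(jk\theta)$, $j\ge 1$. For a pair $\phi=(\phi_i,\phi_o)$ of small elements of $C^{2,\alpha}_k(\Sph^1)$, let $\Om_R^\phi$ be the ring-shaped domain whose inner and outer boundaries are the radial graphs $\{\,|x|=r_i(R)(1+\phi_i(x/|x|))\,\}$ and $\{\,|x|=1+\phi_o(x/|x|)\,\}$, and let $u_R^\phi$ be the unique solution of \eqref{eq:prob_SD} on $\Om_R^\phi$. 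Writing $\tau^\phi_i,\tau^\phi_o$ for the restrictions of $|\D u_R^\phi|$ to the inner and outer boundary, read off on $\Sph^1$ via the radial parametrisation, define
\[
F(R,\phi)\,=\,\big(\,\tau^\phi_i-\overline{\tau^\phi_i}\,,\ \tau^\phi_o-\overline{\tau^\phi_o}\,\big),
\]
where $\overline{\;\cdot\;}$ is the average over $\Sph^1$. By elliptic regularity and Hopf's lemma, $F$ is a well-defined map into $C^{1,\alpha}_k(\Sph^1)\times C^{1,\alpha}_k(\Sph^1)$ depending analytically on $(R,\phi)$ for $\phi$ small, and $F(R,\phi)=0$ precisely when $(\Om_R^\phi,u_R^\phi)$ solves \eqref{eq:prob_SD} with $|\D u|$ locally constant on $\pa\Om$. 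Since each $\Om_R$ is rotationally symmetric, \eqref{eq:surfacegravity_ST} gives $F(R,0)=0$ for every $R\in(0,1)$: this is the \emph{trivial branch}, and the restriction to the dihedral sector with $k\ge 2$ has already removed the degeneracies of the problem coming from the dilation invariance of the equation $\Delta u=-2$ (the zeroth Fourier mode) and from the translations of $\Om_R$ (the first Fourier mode).

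The heart of the argument is the linearised operator $L_R:=D_\phi F(R,0)$. Expanding $u_R^\phi=u_R+\dot u[\phi]+O(\|\phi\|^2)$, the function $\dot u[\phi]$ solves a homogeneous Dirichlet problem $\Delta\dot u=0$ on the fixed annulus $\Om_R$ with boundary data that is linear in $\phi$ and built from the normal displacement of $\pa\Om_R$; consequently the first-order change of $|\D u_R^\phi|$, hence $L_R$, is explicit, and $L_R$ is block-diagonal with respect to the Fourier modes $\cos(jk\theta)$ of $C^{2,\alpha}_k(\Sph^1)$. On the mode $\cos(jk\theta)$ the harmonic functions on $\Om_R$ are spanned by $|x|^{jk}$ and $|x|^{-jk}$, so this block is a $2\times 2$ matrix $\mathcal M_{jk}(R)$ acting on the pair of $jk$-th Fourier coefficients of $(\phi_i,\phi_o)$, with entries that are elementary functions of $k$, $R$, $r_i(R)$ and $u_R$. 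Thus $L_R$ fails to be an isomorphism exactly when $\det\mathcal M_{jk}(R)=0$ for some $j\ge 1$.

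It then suffices to exhibit, for infinitely many $k\ge 2$, a radius $R_k\in(0,1)$ at which $\det\mathcal M_k(R_k)=0$ with $\ker\mathcal M_k(R_k)$ one-dimensional, $\det\mathcal M_{jk}(R_k)\ne 0$ for all $j\ge 2$, and the simple vanishing eigenvalue of $\mathcal M_k$ crossing zero with nonzero $R$-derivative (the \emph{transversality condition}). Granting this, the Crandall--Rabinowitz bifurcation theorem applied to $F$ on $C^{2,\alpha}_k(\Sph^1)\times C^{2,\alpha}_k(\Sph^1)$ with parameter $R$ produces an analytic curve $\epsilon\mapsto(R(\epsilon),\phi(\epsilon))$ of zeros of $F$ with $(R(0),\phi(0))=(R_k,0)$ and $\phi(\epsilon)=\epsilon(\psi_i,\psi_o)\cos(k\theta)+o(\epsilon)$, where $(\psi_i,\psi_o)$ spans $\ker\mathcal M_k(R_k)$. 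For $0<|\epsilon|$ small, $(\Om_{R(\epsilon)}^{\phi(\epsilon)},u_{R(\epsilon)}^{\phi(\epsilon)})$ is a genuine solution of \eqref{eq:prob_SD} on a ring-shaped domain, with $|\D u|$ locally constant on $\pa\Om$ and invariant under the dihedral group of order $2k$ but not under all rotations. Each such curve is a continuum of solutions, and distinct admissible $k$'s yield solutions with different symmetry groups, which proves the theorem.

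The step I expect to be the main obstacle is this last verification, and in particular the existence of $R_k\in(0,1)$ with $\det\mathcal M_k(R_k)=0$: although $\mathcal M_k(R)$ is completely explicit, $\det\mathcal M_k(R)=0$ is a transcendental equation in $R$, made awkward by the fact that $r_i(R)$ is only implicitly defined and by the non-monotonicity of the model solutions, which rules out soft monotonicity arguments. I would handle it by an intermediate-value argument comparing the sign of $\det\mathcal M_k(R)$ as $R\to 0^+$ — where, consistently with Serrin's rigidity for the ball, $\mathcal M_k$ should stay invertible — with its sign as $R\to 1^-$, i.e. in the thin-ring degeneration; alternatively, a large-$k$ asymptotic analysis of $\mathcal M_k(R)$ can both locate $R_k$ and supply infinitely many admissible modes at once. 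Checking that $\ker\mathcal M_k(R_k)$ is one-dimensional, excluding the resonances $\det\mathcal M_{jk}(R_k)=0$ with $j\ge 2$, and verifying transversality are then further explicit but routine computations.
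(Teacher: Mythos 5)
Your setup is the same as the paper's: perturb the two boundary circles of a model annulus by graphs, consider the map sending the perturbation to the deviation of $|\D u|$ from a constant on each component, restrict to a symmetry class that kills the zeroth and first Fourier modes, block-diagonalise the linearisation over the modes into explicit $2\times 2$ matrices built from $|x|^{\pm jk}$, and apply Crandall--Rabinowitz at a radius where one eigenvalue of the $k$-th block crosses zero simply and transversally. (The paper parametrises by $\lambda=r_i(R)$ rather than $R$, subtracts the fixed model constants $c^i_\lambda,c^o_\lambda$ rather than averages, and uses a reflection group with modes $\cos(2i\theta)$ rather than your dihedral sector, but these are inessential variants.) The difficulty is that everything you correctly identify as ``the main obstacle'', plus the steps you label ``routine'', is exactly the content of the paper's proof, and none of it is carried out in your proposal. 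Concretely: (1) existence of the degenerate radius. Your intermediate-value plan is the right one and is what the paper does --- it shows $\mu_1(\lambda,k)\to k-1>0$ as $\lambda\to 0^+$ and $\mu_1(\lambda,k)\to-2<0$ as $\lambda\to 1^-$ --- but this requires computing the trace and determinant $T_{\lambda,k}$, $D_{\lambda,k}$ and their asymptotics through the implicit relation $R^2=(1-\lambda^2)/(-2\log\lambda)$, which you have not done. (2) Transversality is \emph{not} routine here: unlike the Kamburov--Sciaraffia problem you are implicitly following, full monotonicity of $\mu_1$ in the radius parameter does not seem provable, and the paper must instead prove the weaker statement $\pa\mu_1/\pa\lambda<0$ only at the zeros of $\mu_1$ (its Lemma~\ref{le:monotonicity_lambda}), via a chain of algebraic estimates that exploit the relation $\mu_1=0$ itself; this is the genuinely delicate computation of the whole section. (3) Simplicity of the kernel and non-resonance of the higher blocks $\mathcal M_{jk}$, $j\ge 2$, rest on strict monotonicity of the eigenvalues in $k$, which is proved by replacing the (non-symmetric) block matrix with a symmetric one having the same spectrum and showing its $k$-derivative is positive definite (Lemma~\ref{le:monotonicity_k}); you assert this without argument.

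There is also a smaller functional-analytic gap: Crandall--Rabinowitz needs the linearisation to have closed range of codimension one on the H\"older sector, not just the finite-dimensional facts about $\ker\mathcal M_k(R_k)$. The paper obtains this by extending the linearised operator to Sobolev spaces, using the large-$k$ asymptotics $\mu_i(\lambda,k)/k\to$ finite positive limits (Proposition~\ref{pro:main_bifurcation}-$(v)$) to show that the diagonal extension and its right inverse off the kernel are bounded, and then transferring back. Your proposal jumps from the $2\times2$ eigenvalue picture directly to the bifurcation theorem; to make that jump legitimate you would need an analogue of this extension argument, hence in particular the eigenvalue growth estimate, which again requires the explicit trace/determinant computations. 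So the strategy is correct and matches the paper, but the statement is not proved by what you have written: the existence of $R_k$, the transversality at $R_k$, the non-resonance, and the closed-range verification all remain to be established, and at least the transversality step is substantially harder than ``routine''.
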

\noindent 
What in fact we are able to prove (see Section~\ref{Sec:bif}) is that there 
exist (infinite) one-parameter families of solutions to~\eqref{eq:prob_SD}, which bifurcate from the family of ring-shaped model solutions~\eqref{eq:ST}.
The proof of the above theorem essentially relies on  the celebrated Crandall-Rabinowitz Bifurcation Theorem~\cite{Cra_Rab}, and it is inspired by the very recent~\cite{Kam_Sci}, where the same result is proven for solutions to problem~\eqref{pb_Syr}, displaying the same (negative) value of the (exterior) normal derivative at all points of the boundary.

\subsection{A partially overdetermined boundary value problem.}

The question arises whether it is possible to equip problem~\eqref{eq:prob_SD} with an overdetermining condition that is powerful enough to select all and only the rotationally symmetric solutions, avoiding on one hand the overkill caused by condition~\eqref{eq:overneumann}, and the multiplicity results allowed on the other hand by condition~\eqref{eq:loc_overneumann}. Answering this question would then provide 
a complete and satisfactory characterisation of the ring-shaped model solutions~\eqref{eq:ST}: a result that, to the best of the authors' knowledge, is so far missing in the literature.
%to the best of our knowledge and to our surprise, a satisfactory characterisation of the ring-shaped model solutions~\eqref{eq:ST} is so far missing in the literature.
Such a gap might look surprising, if compared with the impressive amount of deep and beautiful works that have been inspired by Serrin's original paper. Dropping any attempt to be complete, we just mention here~\cite{Bar_Mon_Sci,Bra_Nit_Sal_Tro,Bus_Man,Cir_Fig_Ron,
Cir_Vez,Dam_Pac_Ram,Fal_Jar,Gid_Ni_Nir,Ma_Liu,Ser_Zou}. 

As a possible explanation, we observe that the analysis of the case under consideration cannot uniquely rely on the {\em moving plane method},
%. This method, that was firstly introduced by Alexandrov~\cite[CITARE]{} 
%%%%%%%%%%%%%%%%%%%%%%%%%%%%%%%%%%%%%%%%%%%%%%%%%%%%%%%
%
%PRESO DA MAGNANINI POGGESI - SOAP BUBBLE
%
%A. D. Alexandrov, Uniqueness theorem for surfaces in the large, V. Vestnik, Leningrad Univ. 13, 19 (1958), 5-8, Amer. Math. Soc. Transl. 21, Ser. 2, 412?416.
%
%A. D. Alexandrov, A characteristic property of spheres, Ann. Math. Pura Appl. 58 (1962), 303?315.
%
%%%%%%%%%%%%%%%%%%%%%%%%%%%%%%%%%%%%%%%%%%%%%%%%%%%%%%%%
%and then mastered by Serrin himself~\cite{Serrin} together with many other authors~\cite[SFILZA]{}, is commonly considered very powerful; however, it does not apply 
%to our case
%
since, whenever applicable, it has the drawback of providing the solutions with an undesirable monotonicity. More precisely, a solution fitting into that 
%moving plane 
framework would turn out to be nonincreasing in the radial direction, which is false for ring-shaped model solutions~\eqref{eq:ST}, as depicted in~\figurename~\ref{fig:profiles}. 
%this is simply not true for ring-shaped model solutions~\eqref{eq:ST}.
Consequently, only partial classification results can be obtained in the present context through the moving plane method, for example when the model configuration is represented by the sole outer part $(\Om_{R,o}, {u_R}_{|\Om_{R,o}})$ of a ring-shaped model solution $(\Om_R, u_R)$.
%restrictions of the rotationally symmetric solutions $u_R$'s to the outer part $\Om_{R, o} = \{ R < |x| < 1\}$ of their ring-shaped model domains.
This is the content of the following beautiful theorem, firstly proven by Reichel~\cite[Theorem 3]{Rei_1995} and then refined by Syrakov~\cite[Theorem 2]{Syr_2001}, that also represents so far the most advanced result of the literature along the directions indicated by the present paper.
%by the questions raised in the present paper. 

\begin{theorem}[{\cite[Theorem 2]{Syr_2001}, see also~\cite[Theorem 3]{Rei_1995}}]
\label{thm:Syr}
Let $\Om$ be a ring-shaped domain, let $a>0$ be a positive real number and let $(\Om,u)$ be a solution to the problem 
\begin{equation}
\label{pb_Syr}
\begin{dcases}
\Delta u=-2 & \quad\mbox{in } \Om,\\
\,\,\,\,\, u=0 & \quad\mbox{on } \Gamma_o \, ,\\
\,\,\,\,\, u=a & \quad\mbox{on } \Gamma_{i}\, ,\\
\end{dcases}
\end{equation}
where, according to~\eqref{eq:gammas}, we set $\pa \Om = \Gamma_i \sqcup \Gamma_o$.
Suppose that $|\na u|$ is locally constant
on $\pa\Om$ and that 
\begin{equation}
\label{eq:mp_condition}
\frac{\pa u}{\pa\nu} \, \geq \, 0 \, \qquad \hbox{on
$\Gamma_{i}$} \, , 
\end{equation}
where $\nu$ is the outer unit normal to $\Gamma_i$. Then $(\Om,u)$ is rotationally symmetric and, up to translations and rescaling, it corresponds to a portion of $(\Om_{R,o}, {u_R}_{|\Om_{R,o}})$, for some $0<R<1$. In particular $u$ is nonincreasing in the radial direction.
\end{theorem}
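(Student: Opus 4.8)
The plan is to run the moving plane (Alexandrov--Serrin reflection) method, which becomes applicable here precisely because of the sign condition~\eqref{eq:mp_condition}. Heuristically, that hypothesis is what confines us to the ``outer part'' regime, in which the model solution $u_R|_{\Om_{R,o}}$ is radially monotone and the reflection argument has a chance to work, whereas for the full models~\eqref{eq:ST} it cannot (the profiles in \figurename~\ref{fig:profiles} are not monotone).

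First I would establish the a priori structure of $u$. Since $\De u=-2<0$, the strong maximum principle gives $u>0$ in $\Om$ and Hopf's lemma gives $\pa u/\pa\nu<0$ on $\Gamma_o$ (so, by the overdetermining condition, $|\D u|\equiv b_o>0$ there). The genuinely nontrivial point is the upper bound $u\le a$ in $\Om$, which I expect to follow from~\eqref{eq:mp_condition} (possibly combined with the overdetermined condition on $|\D u|$); this in turn localises ${\rm MAX}(u)$ on $\Gamma_i$ and rules out interior critical points, thereby excluding the non-monotone configurations that would make the reflection argument collapse.

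Then, for each direction $e\in\Sph^1$, I would slide the lines $T_\lambda=\{x\cdot e=\lambda\}$ from a support line of $\ove\Om$ downwards. Writing $\sigma_\lambda$ for the reflection across $T_\lambda$, $\Om_\lambda=\Om\cap\{x\cdot e>\lambda\}$, and $\Om_\lambda'=\sigma_\lambda(\Om_\lambda)$, the function $w_\lambda=u-u\circ\sigma_\lambda$ is harmonic on the overlap $\Om_\lambda'\cap\Om$ since $u$ and $u\circ\sigma_\lambda$ both solve $\De\,\cdot=-2$. For $\lambda$ close to $\max_{\ove\Om}(x\cdot e)$ the cap sits near $\Gamma_o$, $\Om_\lambda'\subset\Om$, and $w_\lambda>0$ there by Hopf's lemma at $\Gamma_o$; decreasing $\lambda$, one shows $w_\lambda\ge0$ persists, the delicate boundary contributions being those on the portions of $\pa(\Om_\lambda'\cap\Om)$ lying on $\Gamma_i$ or on $\sigma_\lambda(\Gamma_i)$, where the bound $u\le a$ and condition~\eqref{eq:mp_condition} are exactly what fix the sign. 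Let $\lambda^*(e)$ be the critical value at which the process stops. At $\lambda^*(e)$, either $w_{\lambda^*}\equiv0$ on a connected component of $\Om_{\lambda^*}'\cap\Om$, so that $\Om$ and $u$ are symmetric under reflection across $T_{\lambda^*}(e)$, or the stopping is caused by an internal tangency of $\sigma_{\lambda^*}(\pa\Om)$ with $\pa\Om$, or by $T_{\lambda^*}$ becoming orthogonal to $\pa\Om$; in the latter cases, applying Hopf's lemma (respectively Serrin's corner lemma) to $w_{\lambda^*}$ and using that $|\D u|$ is locally constant on $\pa\Om$ together with~\eqref{eq:mp_condition}, one reaches a contradiction, so $w_{\lambda^*}\equiv0$ after all. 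Since $e$ is arbitrary, the lines $T_{\lambda^*}(e)$ all pass through a single point, whence $(\Om,u)$ is rotationally symmetric; integrating the resulting radial ODE with the boundary data of~\eqref{pb_Syr} and the overdetermined and sign conditions then pins $(\Om,u)$ down, up to translation and rescaling, to a portion of $(\Om_{R,o},u_R|_{\Om_{R,o}})$ for some $0<R<1$, and the nonincreasing radial monotonicity is read off at once (it is also a byproduct of the reflection step).

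The main obstacle is the non-convexity of $\Om$. Unlike in Serrin's original problem, the reflected caps $\Om_\lambda'$ need not remain inside $\ove\Om$ once $T_\lambda$ descends below the level of the hole, so one cannot simply quote the classical argument: the crux is to carry out the a priori localisation of ${\rm MAX}(u)$ and then to keep track of the sign of $w_\lambda$ on the parts of $\pa(\Om_\lambda'\cap\Om)$ near $\Gamma_i$ throughout the sliding. Both are governed by the sign hypothesis~\eqref{eq:mp_condition}; dropping it lands one squarely in the non-monotone regime of~\eqref{eq:ST}, for which this method is powerless and which is precisely the situation the rest of the paper is designed to handle by other means.
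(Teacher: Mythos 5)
First, a point of order: the paper does not prove Theorem~\ref{thm:Syr} at all --- it is imported verbatim from Sirakov~\cite{Syr_2001} and Reichel~\cite{Rei_1995} precisely so that the moving plane analysis need not be reproduced, and it is then invoked only once, in the proof of Theorem~\ref{thm:const_grad}, to handle the outer region. So there is no ``paper's own proof'' to compare with; what can be said is that your strategy (moving planes, made viable by the sign condition~\eqref{eq:mp_condition}) is indeed the strategy of the cited works, and your heuristic explanation of why~\eqref{eq:mp_condition} is the hypothesis that restores monotonicity is consistent with the discussion in the paper.

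As a self-contained proof, however, your outline leaves the two genuinely hard steps unproved. (i) The bound $u\le a$ in $\Om$, which you yourself flag as ``the genuinely nontrivial point'' and merely ``expect'' to follow from~\eqref{eq:mp_condition}: it does not follow from a one-line maximum principle argument ($\Delta u=-2$ pushes the maximum \emph{into} the interior, not onto $\pa\Om$), and in the literature this localisation is obtained either through the reflection process itself or through an auxiliary construction; asserting it is circular, since the sliding step you describe already uses it. (ii) The non-convexity is not just a matter of ``keeping track of signs'': once the line $T_\lambda$ passes the hole, the boundary of $\Om'_\lambda\cap\Om$ acquires pieces lying on $\sigma_\lambda(\Gamma_i)$, where $u\circ\sigma_\lambda=a$, so the comparison you need, $w_\lambda=u-u\circ\sigma_\lambda\ge 0$, would require $u\ge a$ there --- the \emph{opposite} of the bound in (i). The cited proofs circumvent this with an additional device (in essence, extending $u$ by the constant value $a$ across the hole, which the Neumann sign condition~\eqref{eq:mp_condition} turns into a supersolution on the filled, simply connected domain, to which the Serrin reflection together with the corner lemma can then be applied, yielding simultaneously the symmetry of $\Gamma_i$). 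Without this idea the sliding argument as you describe it stalls exactly at the hole, so the proposal is an outline of the correct method rather than a proof; if you only need the statement for the purposes of this paper, citing \cite{Syr_2001,Rei_1995} as the authors do is the honest course.
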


\begin{remark}
Observe that the requirement in~\eqref{eq:mp_condition} is never fulfilled by a solution to~\eqref{eq:prob_SD}, because of the Hopf Lemma.
\end{remark}

In contrast with the above result, we now state our second main theorem, in which a characterisation of the  ring-shaped model solutions~\eqref{eq:ST} on their entire domains is proposed, under the assumption that the set of maximum points of the solution is not finite. A somehow unexpected feature of our theorem is that condition~\eqref{eq:loc_overneumann} is not needed in its full strength, making the problem only partially overdetermined, in the sense of~\cite{Fra_Gaz}.

\begin{thmx}
\label{thm:const_grad}
Let $(\Om,u)$ be a solution to problem~\eqref{eq:prob_SD} such that $\Om$ is a ring-shaped domain and $u$ has infinitely many maximum points. Assume that $|\nabla u|$ is constant on either the inner or the outer boundary component. Then, up to translations and rescaling, $(\Om, u)$ corresponds to a ring-shaped model solution~\eqref{eq:ST}.
%
%VERSIONE ALTERNATIVA
%
%Let $(\Om,u)$ be a solution to problem~\eqref{eq:prob_SD} such that $\Om$ is a ring-shaped domain and $u$ has infinitely many maximum points. Assume that $|\nabla u|$ is constant either on $\Gamma_i$ or on $\Gamma_o$, where, according to~\eqref{eq:gammas}, we set $\pa \Om = \Gamma_i \sqcup \Gamma_o$. Then $(\Om,u)$ is rotationally symmetric and, up to translations and rescaling, it corresponds to $(\Om_R, u_R)$, for some $0<R<1$.
\end{thmx}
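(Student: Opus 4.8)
\emph{Structure of the maximum set.} The first step is to turn the hypothesis on ${\rm MAX}(u)$ into rigid geometric information. Since $u+|x|^2/2$ is harmonic, $u$ is real-analytic in $\Om$; since moreover $u$ vanishes on $\pa\Om$ and is positive inside, ${\rm MAX}(u)$ is a compact subset of $\Om$. At a point $p\in{\rm MAX}(u)$ one has $\D u(p)=0$ and, because $\Delta u=-2$, the Hessian $\DD u(p)$ has eigenvalues $\lambda_1\le\lambda_2\le0$ with $\lambda_1+\lambda_2=-2$. If $\lambda_2<0$ then $p$ is a strict local maximum, hence isolated in ${\rm MAX}(u)$; if $\lambda_2=0$, working in Hessian eigencoordinates (using $\lambda_1=-2\neq0$ and the analytic implicit function theorem) one sees that near $p$ the set ${\rm MAX}(u)$ is either $\{p\}$ or a real-analytic arc along which $\D u\equiv0$. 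Hence ${\rm MAX}(u)$ is the disjoint union of finitely many points and a compact boundaryless real-analytic $1$-manifold, i.e.\ of finitely many disjoint real-analytic circles; so the assumption that it be infinite forces it to contain a real-analytic simple closed curve $\gamma$. Integrating $\Delta u=-2$ over the region enclosed by a contractible such curve, or over the annulus between two nested ones, and using $\pa_\nu u=0$ on ${\rm MAX}(u)$, would force a region of positive area to have vanishing integral of $\Delta u$, which is absurd; hence $\gamma$ is unique, non-contractible in $\Om$, and therefore separates $\Gamma_i$ from $\Gamma_o$. Writing $\Om\setminus\gamma=\Om^{\rm in}\sqcup\Om^{\rm out}$, with $\Gamma_i\subset\pa\Om^{\rm in}$ and $\Gamma_o\subset\pa\Om^{\rm out}$, both pieces are ring-shaped domains, on their common boundary $\gamma$ we have $u=\umax$ and $\D u=0$, and a posteriori ${\rm MAX}(u)=\gamma$.

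\emph{Symmetry of one annular region.} Suppose first that $|\D u|$ is constant on $\Gamma_o$. Then $(\Om^{\rm out},u_{|\Om^{\rm out}})$ solves~\eqref{pb_Syr} with $a=\umax$, the quantity $|\D u|$ is locally constant on $\pa\Om^{\rm out}$ (the prescribed constant on $\Gamma_o$, and $0$ on the inner component $\gamma$), and $\pa_\nu u=0\ge0$ on $\gamma$, so that~\eqref{eq:mp_condition} holds (with equality). This is exactly the hypothesis set of Theorem~\ref{thm:Syr}, which yields that, up to translation and rescaling, $(\Om^{\rm out},u_{|\Om^{\rm out}})$ coincides with $(\Om_{R,o},{u_R}_{|\Om_{R,o}})$ (the whole outer part, since $\pa_\nu u=0$ on $\gamma$ pins the inner radius to $R$) for some $R\in(0,1)$, necessarily the one with $\umax=(1-R^2)/2+R^2\log R$, and $\gamma=\{|x|=R\}$. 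If instead $|\D u|$ is constant on $\Gamma_i$, Theorem~\ref{thm:Syr} is unavailable: as its Remark notes, the sign condition~\eqref{eq:mp_condition} can never hold at $\Gamma_i$, and the model solutions are not radially monotone on $\Om_{R,i}$, so $\Om^{\rm in}$ cannot be symmetrised by reflections. One must then prove the corresponding rigidity for $(\Om^{\rm in},u_{|\Om^{\rm in}})$ by a different route: a ring-shaped configuration carrying Cauchy data $(u,\pa_\nu u)=(\umax,0)$ on its outer boundary $\gamma$ and the overdetermination $|\D u|\equiv\text{const}$ on $\Gamma_i$ must be $(\Om_{R,i},u_R)$. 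Here I would use that $P=|\D u|^2+2u$ is subharmonic (because $|\DD u|^2\ge2$ in dimension two, with equality precisely when $\DD u$ is a negative multiple of the identity) together with a monotonicity property, in the spirit of comparison geometry, of suitable integral quantities along the level-set foliation of $u$ on $\Om^{\rm in}$: the Neumann overdetermination on $\Gamma_i$ pins the relevant quantity at $\{u=0\}$, the forced Hessian degeneracy $\DD u={\rm diag}(-2,0)$ along $\gamma$ pins it at $\{u=\umax\}$, and the two end values are compatible only along the rotationally symmetric profile.

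\emph{Propagation by unique continuation.} Once one of the two regions --- say $\Om^{\rm out}$ --- has been identified with $(\Om_{R,o},{u_R}_{|\Om_{R,o}})$ in suitable coordinates, on $\gamma=\{|x|=R\}$ both $u$ and $u_R$ are real-analytic, solve $\Delta(\cdot)=-2$, and carry the same Cauchy data: $u=u_R=\umax$ and $\D u=\D u_R=0$ (the latter because $\gamma\subset{\rm MAX}(u)$ and $\{|x|=R\}={\rm MAX}(u_R)$). Hence $w=u-u_R$ is harmonic near $\gamma$ inside $\Om^{\rm in}$, with vanishing Cauchy data on the analytic curve $\gamma$, and Holmgren's uniqueness theorem gives $w\equiv0$ in a neighbourhood of $\gamma$. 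Since $u_R$ is defined and positive on the punctured disk $\{0<|x|<1\}$ while $u>0$ on $\Om^{\rm in}$, real-analytic continuation together with a standard connectedness argument yields $u\equiv u_R$ on all of $\Om^{\rm in}$ and $\Om^{\rm in}=\Om_{R,i}$. Therefore $(\Om,u)$ coincides with $(\Om_R,u_R)$ up to translation and rescaling, and the same reasoning --- with the roles of $\Om^{\rm in}$ and $\Om^{\rm out}$ interchanged --- settles the case in which $|\D u|$ is constant on $\Gamma_i$.

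\emph{Main obstacle.} The hard part is the comparison-geometric rigidity in the second step, for the region where the moving plane method is ineffective: because the model solutions fail to be radially monotone, reflections are of no use on $\Om^{\rm in}$, and a genuinely new mechanism has to be set up in which the prescribed Neumann datum on one boundary component and the degeneracy of $\DD u$ along ${\rm MAX}(u)$ replace the two ``boundary conditions'' available in the classical Serrin setting. A subsidiary but essential point is the analytic structure theory of ${\rm MAX}(u)$ in the first step, which is exactly what converts the hypothesis ``$u$ has infinitely many maximum points'' into the usable geometric statement that ${\rm MAX}(u)$ is a single separating circle along which $\D u$ vanishes.
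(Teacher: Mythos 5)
Your reduction of the theorem matches the paper's: the analytic structure of ${\rm MAX}(u)$ forces a single real-analytic separating circle $\Sigma$ (your divergence-theorem argument for uniqueness and non-contractibility is a fine substitute for the paper's appeal to the {\L}ojasiewicz structure theorem and the Strong Maximum Principle), and when $|\D u|$ is constant on $\Gamma_o$ you apply Theorem~\ref{thm:Syr} to the outer annulus exactly as the paper does, then propagate across $\Sigma$ by analyticity/Holmgren. Up to this point the proposal is correct and essentially coincides with the paper's proof.

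The genuine gap is the case in which $|\D u|$ is constant on $\Gamma_i$, which is precisely the new content of the theorem, and your treatment of it is only a declaration of intent. The mechanism you sketch --- subharmonicity of $P=|\D u|^2+2u$ plus an unspecified monotonicity along the level sets, with the claim that the Neumann datum on $\Gamma_i$ and the Hessian degeneracy along $\Sigma$ ``are compatible only along the rotationally symmetric profile'' --- does not close as stated. Subharmonicity of $P$ yields rigidity only when $\max_{\pa\Om\cap\overline N}|\D u|^2\le 2\umax$ (this is the paper's Theorem~\ref{thm:main_ST}, which characterises Serrin's ball), whereas on the inner region the Pohozaev identity (Proposition~\ref{pro:poho_ST}) forces $\overline\tau(\Gamma_i)>\sqrt2$, so the maximum of $P$ sits on $\Gamma_i$ and the $P$-function argument gives no contradiction and no rigidity; moreover the Cauchy data $(u,\pa_\nu u)=(\umax,0)$ on $\Sigma$ are shared by every candidate configuration, so they cannot by themselves ``pin'' anything --- what must be used is quantitative information about $\Sigma$ (its curvature and length). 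The paper's actual route is: the Pohozaev identity to place $\overline\tau(\Gamma_i)$ above the threshold $\sqrt2$ (also excluding the degenerate value $\sqrt2$, which your sketch never addresses); the pointwise gradient comparison $W\le W_R$ with the model of matched expected core radius (Theorem~\ref{thm:min_pr_SD}), proved via a maximum principle for $\beta\,(W-W_R)$ with a specifically chosen multiplier $\beta$ and a Reverse {\L}ojasiewicz estimate near ${\rm MAX}(u)$; the resulting curvature bounds $\kappa\le -1/r_i(R)$ on $\Gamma_i$ and $\kappa\le 1/R$ on $\Sigma$ (Propositions~\ref{pro:curv_bound} and~\ref{pro:curv_bound_Sigma}); and their integrated versions together with the integral identity of Proposition~\ref{pro:area_lower_bound_mon}, giving the chain $2\pi\le|\Sigma|/R\le|\Gamma_i|/r_i(R)\le 2\pi$ whose saturation activates the rigidity statement. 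None of this machinery is reconstructed in your proposal, so the inner-boundary case remains unproved.
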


Let us mention that the above theorem has a natural  fluid-dynamical interpretation. The physical context is the same as for Serrin's result, up to consider a {\em hollow} cylindrical pipe with a ring-shaped $2$-dimensional cross section $\Om$. In particular, our theorem says that {\em ``If the WSS is constant
on some connected component of the pipe's wall, then the velocity of the fluid may attain its maximal value only at finitely many streamlines, unless the hollow pipe itself consists of a couple of concentric cylindrical round tubes"}.

%%%%%%%%%%%%%%%%%%%%%%%%%%%%%%%%%%%%

\subsection{A comparison algorithm.}
\label{sub:compa}

%%%%%%%%%%%%%%%%%%%%%%%%%%%%%%%%%%%

As it is clear from the previous discussion, the proof of Theorem~\ref{thm:const_grad} lies out of the range of applicability of some of the most powerful nowadays existing techniques. This fact has pushed us to develop a new approach, based on the introduction of a comparison algorithm. The comparison is meant to produce a number of sharp and rigid {\em a priori} bounds (e.g., Theorem~\ref{thm:min_pr_SD}, Proposition~\ref{pro:curv_bound}, and Proposition~\ref{pro:area_bound_Gamma}), eventually leading to the desired classification. More concretely, given a solution $(\Om, u)$  to problem~\eqref{eq:prob_SD}, we compare relevant analytic and geometric quantities (such as the gradient of $u$, or the lengths of the boundary components $|\Gamma_i|$ and $|\Gamma_o|$) with their corresponding counterparts on a ring-shaped model solution $(\Om_R, u_R)$ (namely with $|\nabla u_R|$, $|\Gamma_{R,i}| = 2 \pi r_i(R)$, and $|\Gamma_{R,o}| = 2 \pi$, to continue with the previous exemplification), bounding the former in terms of the latter. 

In order to obtain sharp and rigid inequalities, it is  crucial to take care of two important and intimately related aspects. The first one is the choice of an appropriate {\em scale fixing}. Indeed, since our problem is invariant by translations and rescaling (see~\eqref{eq:scaling}) and since we have already chosen a scale to describe the family of ring-shaped model solutions~\eqref{eq:ST}, it is convenient to normalise  in a consistent fashion also the generic solution $(\Om,u)$ that we aim to analyse. 
The second aspect is the selection of a good {\em basis for comparison}. In other words, since in the chosen normalisation each ring-shaped model solution is uniquely determined by the value of its {\em core radius}, we need to find a way to associate our generic solution with a number $0<R<1$. It actually turns out that the resolution of the latter problem also gives an answer to the scale fixing question. Indeed, once a core radius $0<R<1$ is selected, it is sufficient to rescale the solution according to~\eqref{eq:scaling} in such a way that  
\begin{equation*}
u_{\max} \,\, = \,\, (u_R)_{\max} \,.
\end{equation*}
Let us focus then on the problem of finding the most convenient value of the parameter $0<R<1$. The heuristic idea, here, is to use a shooting paradigm to guess the value of the core radius from the slope of the solution at the boundary, i.e., from the measurement of its wall shear stress. A closer look shows that a more refined information is actually needed.
Indeed, if two solutions are related to each other as in~\eqref{eq:scaling}, the value of the expected core radius must coincide. It is then convenient to replace the wall shear stress of a boundary component with its scaling invariant version. Moreover, it is clear that, in the outlined scheme, every single boundary component might in principle give rise to a different guess for the core radius parameter.
Let us take care of these two queries with a couple of definitions.

\begin{figure}
\centering
\includegraphics[scale=0.8]{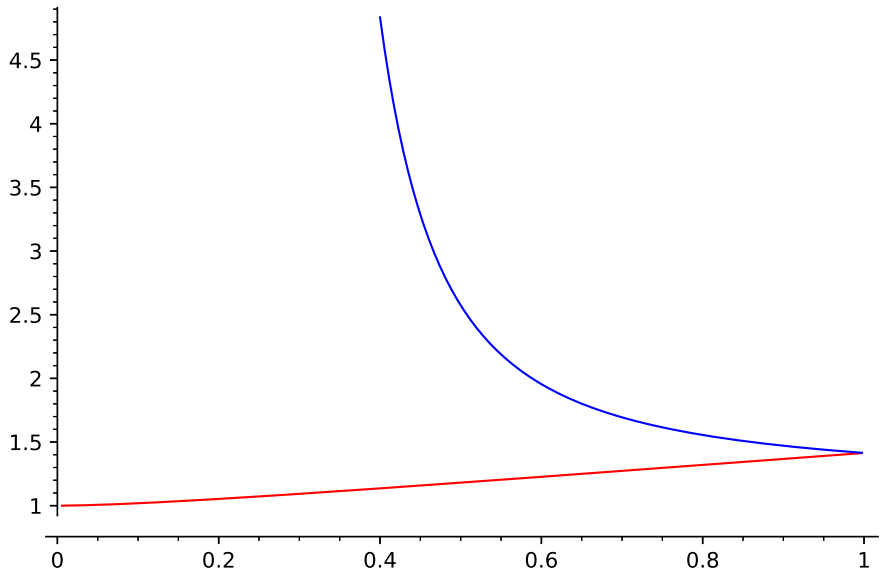}
\caption{The above figure represents the graphs of the normalised wall shear stress of the ring-shaped model solutions, as function of the core radius parameter $0<R<1$. In red we have the graph of the {\em outer NWSS function} $\overline\tau_o$, whereas in blue we have the graph of the {\em inner NWSS function} $\overline\tau_i$.
}
\label{fig:NWSS}
\end{figure}

\begin{definition}[Normalised Wall Shear Stress]
\label{def:NWSS}
Let $(\Om,u)$ be a solution to problem~\eqref{eq:prob_SD} and let $\Gamma \in \pi_0(\pa \Om)$ be a connected component of the boundary of $\Om$. We define the {\em normalised wall shear stress (NWSS)} of $\Gamma$ as
\begin{equation}
\label{eq:NWSS_BDARY}
\overline{\tau}(\Gamma)\,:=\,
\frac{\max_{\Gamma}|\D u|}{\sqrt{2\umax}} \, .
\end{equation}
More in general, if $N$ is a connected component of $\Om\setminus {\rm MAX}(u)$, we define the {\em normalised wall shear stress (NWSS)} of the region $N$ as 
\begin{equation}
\label{eq:NWSS_REG}
\overline{\tau}(N)\,:=\, \max\, \left\{ \, \overline{\tau} (\Gamma) \, : \, \Gamma \in \pi_0(\partial \Om \cap \overline{N}) \,  \right\} \, .
\end{equation}
\end{definition}

To introduce our second definition, it is important to observe that on ring-shaped model solutions the NWSS at either the inner or the outer boundary can be computed as a function of the core radius. More precisely, it is useful to consider the outer NWSS function $\overline\tau_o$ and the inner NWSS function $\overline\tau_i$, whose graphs are plotted in~\figurename~\ref{fig:NWSS} and which are defined as follows.
\begin{itemize}
\item 
The {\em outer NWSS function}  
\begin{equation}
\label{eq:k+}
\overline\tau_o  :  [\, 0, 1) \longrightarrow [\, 1, \sqrt 2\, )
\end{equation}
is defined by
\begin{equation}
\label{tau_o_2}
\hspace{-1.5cm}\overline\tau_o(R)\,:= \, \frac{|\D u_R|}{\sqrt{2(u_R)_{\rm max}}}\phantom{u\!\!}_{\big|_{\Gamma_{R,o}}}\!\!\!\! = \,\, 
\begin{dcases}
1,&\qquad\mbox{if }R=0,\\
\frac{1-R^2}{\sqrt{1-R^2+2R^2\log R}},& \qquad\mbox{if }0<R<1.
\end{dcases}
\end{equation}
%\begin{align*}
%\tau_o(0) \,& = \, 1 \, , \qquad\qquad\qquad\quad\quad\ \ \, \hbox{for $R=0$}\,,  \\
%\tau_o(R) \,&=\,
%\frac{1-R^2}{\sqrt{1-R^2+2R^2\log R}}\,, \qquad \hbox{if $0<R<1$} \, .
%\end{align*}
%Loosely speaking, $\tau_o(R)$ is nothing but the constant value of 
%$|\D u_R|/\sqrt{2(u_R)_{\rm max}}$ at $\{ |x| = 1 \}$,
%where $u_R$ is the model solution defined in~\eqref{eq:ST}.
Observe that $\overline\tau_o$ is continuous, strictly increasing, 
and $\overline\tau_o(R) \to\sqrt2$, as $R\to 1^-$.
\smallskip
\item 
The {\em inner NWSS function}   
\begin{equation}
\label{eq:k-}
\overline\tau_i  :  ( 0, 1 \,] \longrightarrow [\, \sqrt{2}, +\infty \, )
\end{equation}
is defined by
\begin{equation}
\label{tau_i_2}
\,\,\,\overline\tau_i(R)\,:= \, \frac{|\D u_R|}{\sqrt{2(u_R)_{\rm max}}}\phantom{u\!\!}_{\big|_{\Gamma_{R,i}}}\!\!\!\!\!\! = \,\,
\begin{dcases}
\frac{R^2-r_i^2(R)}{r_i(R)\sqrt{1-R^2+2R^2\log R}},& \qquad\mbox{if }0<R<1,\\
\sqrt 2,&\qquad\mbox{if }R=1.
\end{dcases}
\end{equation}
%\begin{align*}
%\tau_-(1) \,& = \, 2 \, ,  \hspace{1.5cm} \hbox{for $m=1$}\,,   \\
%\tau_-(m) \,&=\,
%\frac{\left(m-r_m^2\right)^2}{r_m^2(1-m+m\log m)} \,, \hspace{1.5cm}\hbox{if $0<m<1$} \, ,
%\end{align*}
%Similarly to above, we note that $\tau_i(R)$ coincides with the constant value of 
%$|\D u_R|/\sqrt{2(u_R)_{\rm max}}$ at $\{ |x| = r_i(R) \}$, where $r_i(R)$
%is defined as in~\eqref{eq:ST}. 
Observe that $\overline\tau_i$ is continuous, 
strictly decreasing, and $\overline\tau_i(R) \to + \infty$, as $R\to 0^+$.
\end{itemize}
As pointed out, the key feature of $\overline\tau_o$ and $\overline\tau_i$ is that they are invertible. Building on this property, we are now ready to introduce the notion of {\em expected core radius}. In analogy with the NWSS, this invariant can be associated to either a boundary component of $\pa \Om$
or, more in general, to a connected component of $\Om \setminus {\rm MAX}(u)$.
%
%
%
%
%\bigskip
%\bigskip
%
%
%
%
%
%
%
%
%\begin{vcomm}
%In this subsection, we define a suitable virtual parameter, which has the same mathematical
%meaning as the parameter $R$ defining the family~\eqref{eq:ST} of model
%solutions: that of being the core radius. 
%\end{vcomm}
%To this aim, we introduce the functions \begin{vcomm}$\tau_i$ and $\tau_o$,\end{vcomm} 
%whose graphs are plotted in Figure~\ref{fig:surfacegravity_ST}. 
%They represent, \begin{vcomm}respectively, a normalized WSS 
%at $\Gamma_i$ and $\Gamma_o$ of the model solutions,
%as functions of the parameter $R$.
%\end{vcomm}
%\begin{comm}(FORSE DEFINIRE LE WSS FUNCTIONS RISPETTO AL VALORE DI $|\D u|/\sqrt{2\umax}$ E NON DI $|\D u|^2/2\umax$? Fatto, in base a questo va cambiata la figura)\end{comm}
%

%\begin{figure}
%\centering
%\includegraphics[scale=0.5]{surface_gravity}
%\caption{Plot of the functions $k_+(m)$ (blue) and $k_-(m)$ (red).}
%\label{fig:surfacegravity_ST}
%\end{figure}

%Proceeding in analogy with~\cite[Definition~2]{Bor_Maz_2-I}, we give the following definition of mass.

\begin{definition}[Expected Core Radius]
\label{def:virtual_mass} 
Let $(\Om,u)$ be a solution to problem~\eqref{eq:prob_SD} and let $\Gamma \in \pi_0(\pa \Om)$ be a connected component of the boundary of $\Om$. We define the {\em expected core radius} of $\Gamma$ as follows:
%
%Let $(\Om, u)$ be a solution to~\eqref{eq:prob_SD} and let $N$ be a connected component of $\Om \setminus {\rm MAX} (u)$.
%The virtual core radius $R(N)$ of $N$ is defined as follows:
\begin{itemize}
\item[(i)] If $ 1 \leq\overline\tau(\Gamma)<\sqrt2$, we set
\begin{equation}
\label{virtual_R_1_gamma}
R(\Gamma)\,\, =\,\,\overline\tau_o^{-1}\big(\overline\tau(\Gamma)\big) \,,
%\,\,=\,\, \tau_o^{-1} \left(  \max_{\Gamma(N)} \frac{|\D u |}{\sqrt{2\umax}}  \right),
\end{equation}
%where $\overline\tau_o^{-1}$ is the inverse of the outer NWSS function defined 
%in~\eqref{eq:k+}--\eqref{tau_o_2}.
%\smallskip
\item[(ii)] If $\overline\tau(\Gamma)\geq \sqrt{2}$, we set
\begin{equation}
\label{virtual_R_2_gamma}
R(\Gamma)\,\,  =\,\,\overline\tau_i^{-1}\big(\overline\tau(\Gamma)\big) \, ,
%\,\, = \,\, \tau_-^{-1} \left(  \max_{\Gamma(N)} \frac{|\D u |}{\sqrt{2\umax}}  \right),
\end{equation}
%where $\overline\tau_i^{-1}$ is the inverse of the  
%inner NWSS function defined in~\eqref{eq:k-}--\eqref{tau_i_2}.
\end{itemize}
More in general, if $N$ is a connected component of $\Om\setminus {\rm MAX}(u)$, we define the {\em expected core radius} of $N$ as follows:
\begin{itemize}
\item[(i)] If $\overline\tau(N)<\sqrt2$, we set
\begin{equation}
\label{virtual_R_1}
R(N)\,\, =\,\,\overline\tau_o^{-1}\big(\overline\tau(N)\big) \,.
%\,\,=\,\, \tau_o^{-1} \left(  \max_{\Gamma(N)} \frac{|\D u |}{\sqrt{2\umax}}  \right),
\end{equation}
%where $\overline\tau_o^{-1}$ is the inverse of the outer NWSS function defined 
%in~\eqref{eq:k+}--\eqref{tau_o_2}.
%\smallskip
\item[(ii)] If $\overline\tau(N)\geq \sqrt{2}$, we set
\begin{equation}
\label{virtual_R_2}
R(N)\,\,  =\,\,\overline\tau_i^{-1}\big(\overline\tau(N)\big) \, .
%\,\, = \,\, \tau_-^{-1} \left(  \max_{\Gamma(N)} \frac{|\D u |}{\sqrt{2\umax}}  \right),
\end{equation}
%where $\overline\tau_i^{-1}$ is the inverse of the  
%inner NWSS function defined in~\eqref{eq:k-}--\eqref{tau_i_2}.
\end{itemize}
\end{definition}
%The dependence of $R(N)$ on $u$ is implicit in the notation
%and self-evident from~\eqref{virtual_R_1}--~\eqref{virtual_R_2}.
As it is immediate to check, the expected core radius of a boundary component of a ring-shaped model solution $(\Om_R, u_R)$ coincides by construction with the value of its core radius parameter $R$. In other words we have
\begin{equation*}
R(\Gamma_{R,o}) \, = \, R \,= \, R(\Gamma_{R,i}) \, ,
\end{equation*}
and the same is true for the expected core radius of either the outer or the inner region $R(\Om_{R,o}) = R = R(\Om_{R,i})$. This picture also includes the extremal case of Serrin's solution~\eqref{eq:serrin_ST}, where the expected core radius is actually equal to $0$. 

It should be noticed that definition~\eqref{virtual_R_1_gamma} differs from definition~\eqref{virtual_R_1} in a subtle, though substantial way. In~\eqref{virtual_R_1_gamma} the condition $1 \leq \overline\tau (\Gamma)$ has to be imposed in order to get a number in the range of $\overline\tau_o$, that can be effectively used to define $R(\Gamma)$. Such a condition is not needed in~\eqref{virtual_R_1}, since it turns out to be always satisfied. In particular, the expected core radius $R(N)$ of a region $N \in \pi_0(\Om \setminus {\rm MAX}(u))$ is always well defined, as such, it is obviously nonnegative and more remarkably it vanishes if and only if $(\Om,u)$ is equivalent to Serrin's solution~\eqref{eq:serrin_ST}. This fact is stated in the following theorem, where no assumption is made {\em a priori} on either the topology of $\Om$, or the number its boundary components. 
\begin{thmx}
%[Well Posedness of the Core Radius] 
\label{thm:PMS}
Let $(\Om,u)$ be a solution to problem~\eqref{eq:prob_SD},
and let $N$ be a connected component of $\Om\setminus {\rm MAX}(u)$. Then, the expected core radius of $N$ is well defined and nonnegative 
$$
R(N)\,\geq\,0 \,.
$$
Moreover, the equality holds if and only if $(\Om,u)$ is equivalent to Serrin's solution~\eqref{eq:serrin_ST}.
\end{thmx}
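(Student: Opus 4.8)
The plan is to deduce Theorem~\ref{thm:PMS} from two facts: a pointwise \emph{minimum principle} for the $P$-function $P:=|\nabla u|^2+2u$, namely $P\geq 2\umax$ on $\overline\Om$ together with the rigidity statement that equality at some point forces $(\Om,u)$ to be Serrin's solution (this is Theorem~\ref{thm:min_pr_SD}), and an elementary topological description of the connected components of $\Om\setminus{\rm MAX}(u)$. Granting these, observe first that, by Definition~\ref{def:virtual_mass}, once $\overline\tau(N)$ is known to be a maximum over a nonempty family, the number $R(N)$ is automatically well defined and lies in $[0,1)$ when $\overline\tau(N)<\sqrt2$ (via $\overline\tau_o^{-1}$, whose domain is $[1,\sqrt2)$, provided $\overline\tau(N)\geq1$) or in $(0,1]$ when $\overline\tau(N)\geq\sqrt2$ (via $\overline\tau_i^{-1}$); in either case $R(N)\geq0$. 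Moreover, since $\overline\tau_o$ is a strictly increasing homeomorphism of $[0,1)$ onto $[1,\sqrt2)$ with $\overline\tau_o(0)=1$, and $\overline\tau_i$ takes values in $[\sqrt2,+\infty)$, we have $R(N)=0$ if and only if $\overline\tau(N)=1$. Thus the theorem reduces to showing that $\overline\tau(N)$ is well defined, that $\overline\tau(N)\geq1$, and that $\overline\tau(N)=1$ holds precisely for Serrin's solution.

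\emph{Topology of $N$.} Since ${\rm MAX}(u)$ is closed, $\Om\setminus{\rm MAX}(u)$ is open and $N$ is a nonempty bounded open set; a point of $\partial N$ lying in $\Om\setminus{\rm MAX}(u)$ would belong to another, open, component and hence could not be a limit of points of $N$, so $\partial N\subseteq\partial\Om\cup{\rm MAX}(u)$. If $\partial N\cap{\rm MAX}(u)=\emptyset$, then $N$ would be open and closed in the connected set $\Om$, forcing $N=\Om$ and ${\rm MAX}(u)=\emptyset$; this is impossible, since $u>0$ in $\Om$ while $u=0$ on $\partial\Om$, so that $u$ attains its maximum in the interior. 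If $\partial N\cap\partial\Om=\emptyset$, then $u\equiv\umax$ on $\partial N$, and the minimum principle applied to the superharmonic function $u$ (note $\Delta u=-2<0$) gives $u\geq\umax$ on $\overline N$, contradicting $u<\umax$ on $N$. Hence $\partial N$ meets both $\partial\Om$ and ${\rm MAX}(u)$. Finally, near any $p\in\partial\Om$ one has $0<u<\umax$, so a sufficiently small ball around $p$ meets $\Om$ in a connected subset of $\Om\setminus{\rm MAX}(u)$; if $p\in\overline N$, this subset must lie in $N$. Therefore $\partial\Om\cap\overline N$ is open, as well as closed, in $\partial\Om$, hence a nonempty union of connected components of $\partial\Om$; in particular $\pi_0(\partial\Om\cap\overline N)$ is a nonempty finite family and $\overline\tau(N)=\bigl(\max_{\partial\Om\cap\overline N}|\nabla u|\bigr)/\sqrt{2\umax}$ is well defined.

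\emph{Conclusion of the reduction.} On $\partial\Om\cap\overline N\subseteq\{u=0\}$ the minimum principle gives $|\nabla u|^2=P\geq2\umax$, so $\max_{\partial\Om\cap\overline N}|\nabla u|\geq\sqrt{2\umax}$, i.e.\ $\overline\tau(N)\geq1$; this yields the first assertion. If $R(N)=0$, then $\overline\tau(N)=1$, so by compactness there is $p\in\partial\Om\cap\overline N$ with $|\nabla u(p)|^2=2\umax$; since $u(p)=0$, this is exactly $P(p)=2\umax$, i.e.\ equality in the minimum principle, and the rigidity part of Theorem~\ref{thm:min_pr_SD} forces $(\Om,u)$ to be equivalent to Serrin's solution~\eqref{eq:serrin_ST}. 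Conversely, for Serrin's solution ${\rm MAX}(u_0)=\{0\}$, hence $N=B(0,1)\setminus\{0\}$, and since $|\nabla u_0|\equiv1$ on $\partial B(0,1)$ while $2(u_0)_{\rm max}=1$, we get $\overline\tau(N)=1$ and $R(N)=0$.

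\emph{On the minimum principle.} This is where the substance of the proof lies, and where the comparison-geometry arguments announced in the introduction are needed. The Bochner formula gives $\Delta P=2|\nabla^2u|^2-4\geq0$ by the Cauchy--Schwarz inequality $|\nabla^2u|^2\geq(\Delta u)^2/2$; however, subharmonicity of $P$ only produces a \emph{maximum} principle, which is the opposite of what is required, so a different mechanism is needed. The idea is to foliate $\Om$ by the level sets of $u$ and to track, along the gradient flow of $u$, the pair consisting of $|\nabla u|$ and the signed curvature of the level sets: this pair obeys a Riccati-type system whose forcing terms are governed by $\Delta u=-2$ and by the (vanishing) Gauss curvature of the plane, and which can be compared, in the sense of Sturm, with the explicit solution of the same system furnished by the ring-shaped model~\eqref{eq:ST}. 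The comparison yields $|\nabla u|^2\geq2(\umax-u)$, equivalently $P\geq2\umax$; in the equality case, equality must hold along an entire flow line and, using that $u$ is real-analytic (being a solution of $\Delta u=-2$), this propagates to $\nabla^2u\equiv-{\rm Id}$ on $\Om$, so that $u$ is a paraboloid, $\Om$ is a round ball, and $(\Om,u)$ is Serrin's solution. The main obstacle is that the foliation and the gradient flow degenerate precisely on the critical set of $u$: flow lines may limit onto saddle points rather than onto ${\rm MAX}(u)$, the level sets change topology when the parameter crosses critical values, and ${\rm MAX}(u)$ itself may be a complicated set. Upgrading the local Riccati comparison to the global pointwise inequality on all of $\Om$, past these degeneracies, is the delicate step and is where the new arguments of the paper are required.
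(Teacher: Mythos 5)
Your reduction of the theorem to a statement about $\overline\tau(N)$ is fine, but the key lemma you base everything on is false. You claim a pointwise ``minimum principle'' $P:=|\D u|^2+2u\geq 2\umax$ on all of $\overline\Om$, for every solution of~\eqref{eq:prob_SD}. Take the ellipse $\Om=\{x^2/a^2+y^2/b^2<1\}$ with $a>b$ and
\[
u\,=\,\frac{a^2b^2}{a^2+b^2}\Big(1-\frac{x^2}{a^2}-\frac{y^2}{b^2}\Big)\,,\qquad \umax=\frac{a^2b^2}{a^2+b^2}\,,
\]
which solves $\Delta u=-2$, $u=0$ on $\pa\Om$. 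Along the major axis one computes
\[
P(x,0)-2\umax\,=\,\frac{2b^2x^2}{a^2+b^2}\cdot\frac{b^2-a^2}{a^2+b^2}\,<\,0\qquad (x\neq 0)\,,
\]
and in particular at the boundary point $(a,0)$ one has $|\D u|^2=4a^2b^4/(a^2+b^2)^2<2\umax$. So $P\geq 2\umax$ fails pointwise (even on $\overline N$, since here $N=\Om\setminus\{0\}$), no Riccati/Sturm comparison along the gradient flow can produce $|\D u|^2\geq 2(\umax-u)$ everywhere, and the rigidity branch of your argument (``equality at one point forces Serrin'') collapses with it. The citation is also off target: Theorem~\ref{thm:min_pr_SD} is the \emph{upper} bound $W\leq W_R$, and it is proved after, and by means of, Theorem~\ref{thm:PMS} (it needs $R(N)$ to be well defined), so invoking it here would be circular in any case.

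What is true, and what the theorem actually needs, is only that $\max_{\pa\Om\cap\overline N}|\D u|\geq\sqrt{2\umax}$, with rigidity in case of equality, and the correct mechanism is the \emph{maximum} principle for the subharmonic function $P$ (Bochner gives $\Delta P\geq 0$), not a minimum principle: if $\overline\tau(N)\leq 1$, then $P\leq 2\umax$ on $\pa\Om\cap\overline N$, while $P=2\umax$ on ${\rm MAX}(u)$; when $\pa\Om\cap\overline N=\pa\Om$ the value $2\umax$ is attained at an \emph{interior} maximum point of $u$, so the strong maximum principle forces $P\equiv 2\umax$, hence $\DD u=-g_{\R^2}$ and Serrin's solution. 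The genuinely delicate point, which your topological paragraph does not touch, is the case in which $N$ is not the unique component of $\Om\setminus{\rm MAX}(u)$: then the relevant maximum points of $u$ lie on $\pa N$, not in the interior of $N$, and the touching-point argument cannot be run on $N$ alone. The paper resolves this by showing that $\overline\tau(N)\leq 1$ forces $N$ to be the only component: from $P\leq 2\umax$ on $N$ it deduces that $U(t)=(\umax-t)^{-1}\int_{\{u=t\}\cap\overline N}|\D u|\,\rmd\sigma$ is nonincreasing, and combines this with the {\L}ojasiewicz inequality to exclude $\mathscr{H}^1({\rm MAX}(u)\cap\overline N)>0$. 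Some substitute for this step is indispensable; without it, even the corrected maximum-principle version of your argument does not yield the theorem.
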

%The last main result that we present in this introduction can be interpreted as a refinement of Theorem~\ref{thm:PMS}, whenever $\Om$ is a ring-shaped domain. In this case the we already know that $R(N)>0$ for any $N \in \pi_0(\Om \setminus {\rm MAX}(u))$.
The above theorem can be regarded as a first instance of how effective the notion of expected core radius might be for classification purposes. A second instance is contained in the following theorem. It says in particular that if the expected core radius that is guessed at the outer boundary $R(\Gamma_o)$ coincides with the one that is guessed at the inner boundary $R(\Gamma_i)$, then the solution is rotationally symmetric and coincides up to scaling with $(\Om_R, u_R)$, where $R(\Gamma_o) = R = R(\Gamma_i)$

\begin{thmx}
%[Core Radii Comparison]
\label{thm:B}
Let $(\Om,u)$ be a solution to problem~\eqref{eq:prob_SD} such that $\Om$ is a ring-shaped domain and $u$ has infinitely many maximum points. Assume that $\overline\tau(\Gamma_o)<\sqrt{2}$. Then, the expected core radii of $\Gamma_o$ and $\Gamma_i$ are both well defined and positive. Moreover, they satisfy
$$
R(\Gamma_o)\,\geq\, R(\Gamma_i) \,>\, 0\, ,
$$
and the equality holds if and only if $(\Om,u)$ is equivalent to the ring-shaped model solution whose core radius is given by the common value of the  expected core radii. 
\end{thmx}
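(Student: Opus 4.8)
The plan is to realise the strategy outlined in Section~\ref{sub:compa}: run the comparison algorithm separately on the two regions cut out by ${\rm MAX}(u)$, extract from each a one-sided bound on a single scale-invariant invariant of $(\Om,u)$, and then read off $R(\Gamma_o)\ge R(\Gamma_i)$ from the monotonicity of the corresponding model quantity. \emph{Step 1 (structure of the maximum set and positivity).} First I would prove that, since $u$ has infinitely many maximum points, ${\rm MAX}(u)$ contains an embedded analytic Jordan curve $\Sigma$, non-contractible in $\Om$. At an accumulation point $p$ of ${\rm MAX}(u)$ the Hessian $\nabla^2u(p)\le0$ has eigenvalues $0$ and $-2$ because $\Delta u=-2$; in adapted coordinates the analytic implicit function theorem turns the vanishing of $\nabla u$ along $\{u=\umax\}$ near $p$ into the vanishing of a real-analytic function of one variable, which by the identity theorem is $\equiv\umax$, so an analytic arc of ${\rm MAX}(u)$ runs through $p$. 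This arc cannot end inside $\Om$ (a degenerate critical point prolongs it, a non-degenerate one is isolated) nor reach $\partial\Om$ (where $u=0$), hence it closes up into a Jordan curve; the strong maximum principle applied to the superharmonic $u$ forbids any component of ${\rm MAX}(u)$ to bound a disk in $\Om$, so $\Sigma$ separates the ring into an outer region $\Om_o$, meeting $\partial\Om$ exactly along $\Gamma_o$, and an inner region $\Om_i$, meeting $\partial\Om$ exactly along $\Gamma_i$; finitely many isolated maxima, if present, are irrelevant. Thus $\overline\tau(\Gamma_o)=\overline\tau(\Om_o)$ and $\overline\tau(\Gamma_i)=\overline\tau(\Om_i)$, so (recalling that $\overline\tau$ of a region lies in $[1,+\infty)$) both $R(\Gamma_o)=R(\Om_o)$ and $R(\Gamma_i)=R(\Om_i)$ are well defined, and by Theorem~\ref{thm:PMS} they are both strictly positive, since $\Om$ is a ring and hence $(\Om,u)$ is not Serrin's solution.

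\emph{Step 2 (a squeezed invariant and the outer estimate).} Set $L:=|{\rm MAX}(u)|/\sqrt{2\umax}$, a scale-invariant length, and note that on a model solution $|{\rm MAX}(u_R)|/\sqrt{2(u_R)_{\max}}=2\pi R/\sqrt{1-R^2+2R^2\log R}=:\Lambda(R)$, with $\Lambda:[0,1)\to[0,+\infty)$ continuous and strictly increasing. The key point is the double bound
\begin{equation*}
\Lambda\big(R(\Gamma_i)\big)\,\le\,L\,\le\,\Lambda\big(R(\Gamma_o)\big),
\end{equation*}
from which $R(\Gamma_o)\ge R(\Gamma_i)$ follows by monotonicity of $\Lambda$. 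For the upper bound I would normalise $(\Om,u)$ so that $\umax=(u_{R_o})_{\max}$, with $R_o:=R(\Gamma_o)\in(0,1)$, so that the model $(\Om_{R_o},u_{R_o})$ shares with $(\Om,u)$ both $\umax$ and the value $\max_{\Gamma_o}|\nabla u|=\sqrt{2\umax}\,\overline\tau_o(R_o)$. On $\Om_o$ I would then combine: the sharp pointwise gradient comparison of Theorem~\ref{thm:min_pr_SD} (with the remark that, since $\overline\tau(\Gamma_o)\ge1$, the maximum on $\partial\Om_o$ of the subharmonic $P$-function $|\nabla u|^2+2u$ is attained on $\Gamma_o$, not on $\Sigma$); the curvature bound of Proposition~\ref{pro:curv_bound}; and the length bound $|\Gamma_o|\le|\Gamma_{R_o,o}|$ of Proposition~\ref{pro:area_bound_Gamma}. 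Fed into the coarea formula together with the identity $\int_{\{u=t\}\cap\Om_o}|\nabla u|=2\,|\{u>t\}\cap\Om_o|$ and Cauchy--Schwarz on $\{u=t\}\cap\Om_o$ (rigid precisely when $|\nabla u|$ is constant on the level set), these produce a closed differential inequality for the length and the enclosed area of the level sets of $u|_{\Om_o}$, saturated by the corresponding model quantities; integrating up to $t=\umax$ and letting the level sets collapse onto $\Sigma$ gives $|{\rm MAX}(u)|\le|{\rm MAX}(u_{R_o})|=2\pi R_o$, i.e.\ $L\le\Lambda(R_o)$.

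\emph{Step 3 (the inner estimate and rigidity).} An entirely analogous analysis of $\Om_i$ — now normalising $\umax=(u_{R_i})_{\max}$ with $R_i:=R(\Gamma_i)$, comparing with the inner part of $(\Om_{R_i},u_{R_i})$ and invoking the inner NWSS function $\overline\tau_i$, with the mild dichotomy according to whether $\overline\tau(\Gamma_i)\ge\sqrt2$ or $\overline\tau(\Gamma_i)\in[1,\sqrt2)$ — yields the reverse estimate $L\ge\Lambda(R_i)$; the inequality flips because along the level-set flow inside $\Om_i$ the lengths increase, from $|\Gamma_i|$ up to $|\Sigma|$. This completes the double bound. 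For the equality case, $R(\Gamma_o)=R(\Gamma_i)=:R$ forces $L=\Lambda(R)$ and hence equality throughout both chains: equality in the Cauchy--Schwarz steps makes $|\nabla u|$ constant on every level set of $u$ in $\Om_o$ and in $\Om_i$, which together with the rigidity statements of Theorem~\ref{thm:min_pr_SD}, Proposition~\ref{pro:curv_bound} and Proposition~\ref{pro:area_bound_Gamma} makes $u$ rotationally symmetric on each region; since $\Sigma$ is a round circle of radius $R$ reached from both sides with the same $2$-jet of $u$ (namely $\nabla u=0$, $\nabla^2u={\rm diag}(0,-2)$), the two halves glue across $\Sigma$ into the ring-shaped model solution $(\Om_R,u_R)$, up to translation and rescaling. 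Conversely, the model solutions obviously realise the equality.

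\emph{Main obstacle.} The delicate part is the level-set analysis of Steps~2--3: closing the differential inequality and integrating it all the way to $t=\umax$, where $\nabla u$ degenerates along $\Sigma$, requires controlling that the level sets of $u$ stay sufficiently ``round'' throughout the flow — in particular that no change of their topology occurs off ${\rm MAX}(u)$ and that the curvature information of Proposition~\ref{pro:curv_bound} is propagated inward with the correct sign — while the inner region additionally forces the case distinction on the position of $\overline\tau(\Gamma_i)$ relative to $\sqrt2$. Everything else reduces to the a priori bounds already established by the comparison and to the elementary monotonicity of the explicit model functions $\overline\tau_o$, $\overline\tau_i$ and $\Lambda$.
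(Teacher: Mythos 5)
Your Step 1 (structure of ${\rm MAX}(u)$, existence of a single separating analytic Jordan curve $\Sigma$, well-posedness and positivity of the two expected core radii via Theorem~\ref{thm:PMS}) is essentially the paper's argument. Moreover, your ``squeezed invariant'' is not really a new route: the double bound $\Lambda(R(\Gamma_i))\le |\Sigma|/\sqrt{2\umax}\le \Lambda(R(\Gamma_o))$ is exactly the integrated form of the curvature pinching that the paper proves pointwise on $\Sigma$ (Theorem~\ref{thm:curv_bound_Sigma}, equivalently Proposition~\ref{pro:area_bound_Sigma}), and the monotonicity of $\Lambda$ is the monotonicity of $R\mapsto \sqrt{(u_R)_{\max}}/R$ used there. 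The issue is how you propose to reach those two length bounds.

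Here lies a genuine gap. Your Steps 2--3 derive them by a coarea/level-set differential inequality propagated from $\Gamma_o$ (resp.\ $\Gamma_i$) inward, and this does not go through with the tools you invoke. First, Proposition~\ref{pro:area_bound_Gamma} requires $|\nabla u|$ to be \emph{constant} on $\Gamma_N$, which Theorem~\ref{thm:B} does not assume (the paper stresses that no Neumann condition is imposed here), and for outer regions it gives $2\pi\le|\Gamma_o|$, i.e.\ the reverse of the inequality ``$|\Gamma_o|\le|\Gamma_{R_o,o}|$'' you quote. Second, Proposition~\ref{pro:curv_bound} is only a bound at the points of $\Gamma$ where $|\nabla u|$ attains its maximum; without constant Neumann data it cannot feed a differential inequality along the whole boundary, let alone be ``propagated inward''. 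Third, the monotonicity of level-set lengths in $\Om_i$, the roundness/no-topology-change of the level sets and the integrability of your inequality up to $t=\umax$ are precisely the points you flag as open, and nothing excludes interior saddle points of $u$ off ${\rm MAX}(u)$; likewise your scheme has no mechanism to rule out $\overline\tau(\Gamma_i)<\sqrt2$. The missing idea is that the comparison must be performed \emph{at $\Sigma$ itself}, not transported from $\partial\Om$: the Taylor expansions of $W$ and $W_R=|\nabla u_R|^2\circ\Psi$ along the top stratum (Lemma~\ref{le:WWm_expansions}), compared through the gradient estimate $W\le W_R$ of Theorem~\ref{thm:min_pr_SD}, give the pointwise curvature bounds of Proposition~\ref{pro:curv_bound_Sigma}; integrating them over $\Sigma$, where $\int_\Sigma\kappa\,\rmd\sigma=2\pi$, excludes $\overline\tau(\Om_i)<\sqrt2$ by a sign argument, yields both bounds at once and hence $R(\Gamma_o)\ge R(\Gamma_i)$, and in the equality case forces $\kappa$ to be constant, so that $\Sigma$ is a round circle and rigidity follows from the Cauchy--Kovalevskaya uniqueness for the overdetermined data $u=(u_R)_{\max}$, $\partial u/\partial\nu=0$ on $\Sigma$ together with analyticity --- rather than from equality in Cauchy--Schwarz on level sets, which your chain never actually establishes.
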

It is important to observe that, in contrast with Theorem~\ref{thm:const_grad}, no constant Neumann data are imposed on $\pa \Om$ in Theorem~\ref{thm:B}. Concerning the condition $\overline\tau(\Gamma_o) < \sqrt{2}$, it must be noticed that it is always satisfied on the model solutions, as ${\rm Im}(\overline\tau_o) = [1 , \sqrt{2})$. Building on this condition, we will deduce that 
$$
1 \leq \overline\tau(\Gamma_o) \qquad \hbox{and} \qquad \sqrt{2} \leq \overline\tau(\Gamma_i) \, .
$$ 
The first inequality implies that $R(\Gamma_o)$ is well defined, whereas the second says that the NWSS at $\Gamma_i$ lies in the range of $\overline\tau_i$, as it is natural to expect in the model situation.

\subsection{Comments and further directions.}

The results and techniques employed in this paper open the way to a number of natural questions and possibilities for further developments. 
In this subsection we select some of the ones that we consider more natural or stimulating.

%%%%%%%%%%%%%%%%%%%%%%%
%%% PUNTO 1
%%%%%%%%%%%%%%%%%%%%%%%

%Let us start with a basic, nonetheless fundamental, observation concerning our model solutions. A common feature of all the ring-shaped rotationally symmetric solutions $(\Omega_R,u_R)$ is that the NWSS of the inner region $\Omega_{R,i}$ is greater than $\sqrt{2}$, whereas the NWSS of the outer region $\Omega_{R,o}$ is less than $\sqrt{2}$. 
%It would be interesting to figure out whether this holds true for a general ring-shaped solution to~\eqref{eq:prob_SD}, or whether there are counterexamples.
%In a broader sense, one would want to understand to what extent the fact that a region is outer or inner influences the NWSS of that region. 
%In the following sections we will be able to answer this question in some special cases: for instance, we will show that, if the NWSS is constant on the boundary of our region, then the region has low NWSS if it is outer and high NWSS if it is inner, as expected (see Proposition~\ref{pro:poho_ST}).
%A complete answer to this question could potentially allow us to remove the hypothesis $\overline\tau(\Gamma_o)<\sqrt{2}$ from Theorem~\ref{thm:B}.

Let us start with a basic, nonetheless fundamental, observation concerning our comparison algorithm.
As it is clear from Subsection~\ref{sub:compa}, in order to implement our method it is first crucial to select a rotationally symmetric solution to compare with. This is done by matching the NWSS of a region with the NWSS of the outer or inner region of a model solution.
A common feature of all the rotationally symmetric solutions $(\Omega_R,u_R)$ is that the NWSS of the inner region $\Omega_{R,i}$ is greater than $\sqrt{2}$, whereas the NWSS of the outer region $\Omega_{R,o}$ is less than $\sqrt{2}$. 
It would be interesting to figure out whether this holds true for a general ring-shaped solution to~\eqref{eq:prob_SD}, or whether there are counterexamples.
In a broader sense, one would want to understand to what extent the fact that a region is outer or inner influences the NWSS of that region. 
In the following sections we will be able to answer this question in some special cases: for instance, we will show that, if the NWSS is constant on the boundary of our region, then the region has low NWSS if it is outer and high NWSS if it is inner, as expected (see Proposition~\ref{pro:poho_ST}).
A complete answer to this question could potentially allow us to remove the hypothesis $\overline\tau(\Gamma_o)<\sqrt{2}$ from Theorem~\ref{thm:B}.

%%%%%%%%%%%%%%%%%%%%%%%
%%% PUNTO 2
%%%%%%%%%%%%%%%%%%%%%%%

%It is worth commenting on how the classical moving plane method compares with the method employed in this paper. As already mentioned, the moving plane method seems to be incredibly effective in characterizing solutions that are monotonically decreasing. In contrast, our comparison argument works particularly well when dealing with monotonically increasing model solutions. In this regard, the two methods seem to be complementary.
%Indeed, the moving plane method will come to our aid when discussing the rigidity statement of Theorem~\ref{thm:const_grad}, as we will need to invoke Theorem~\ref{thm:Syr} to deal with outer regions (although it should be mentioned that our approach can be employed as well, albeit with a more convoluted argument, see~\cite{Bor_RS}).

Another point that is worth commenting on is how the classical moving plane method compares with the method employed in this paper. As already mentioned in the discussion leading to Theorem~\ref{thm:Syr}, the moving plane method seems to be particularly effective in characterizing rotationally symmetric solutions that are monotonically decreasing along the radial coordinate. On the other hand, we will show that our comparison argument works well when dealing with monotonically increasing model solutions. It would then appear that the two methods are complementary.
Indeed, the moving plane method will come to our aid when discussing the rigidity statement of Theorem~\ref{thm:const_grad}, as we will need to invoke Theorem~\ref{thm:Syr} to deal with outer regions. 
It should however be stressed that, while the moving plane method has been extensively studied and its limitations are well understood (as well as its outstanding merits), our technique is essentially new, and the question is still open about whether and how far it is possible to extend its range of applicability. This point will be discussed in further details in the upcoming work~\cite{Bor_RS}, where it will be shown that our technique can be refined to deal with monotonically decreasing model solutions as well, albeit at some cost,  such as more subtle arguments and slightly less general conclusions.
%It will be clear from~\cite{Bor_RS} that, although Theorem~\ref{thm:Syr} is helpful in simplifying the proof, one can actually deal with outer regions using our method as well, meaning that the results in the present paper could all be proved relying solely on our comparison approach.
It will be clear from~\cite{Bor_RS} that the results in the present paper can all be proved relying solely on our comparison approach. Nevertheless, in this paper we preferred to exploit Theorem~\ref{thm:Syr} in order to shorten and simplify the exposition.

%%%%%%%%%%%%%%%%%%%%%%%
%%% PUNTO 3
%%%%%%%%%%%%%%%%%%%%%%%

%Again on the topic of the comparison between the two techniques, one of the strengths of the moving plane method is that it can be employed with almost no modifications to study a large family of elliptic operators. It would clearly be interesting to figure out whether our method can also be adapted to study other PDE problems. 
%For instance, similar comparison techniques have been used in~\cite{Bor_Maz_2-I,Bor_Maz_2-II} to study static spacetimes with positive cosmological constant.
%Also, as it will be shown in an upcoming work, our strategy allows to study problem~\eqref{eq:prob_SD} in higher dimension as well.
%To name a few other possible applications of our method, natural ones would be the study of the classical eigenvalue problem $\De u=-\lambda u$ or of the problem $\De u=\delta_o$ inside a bounded domain containing the origin, as the rotationally symmetric solutions to the latter are very similar to the ones considered in this paper (namely, they are the same as~\eqref{eq:ST} but with negative values of $R$).

Remaining on the topic of investigating the features of our technique, it would definitely be interesting to understand whether it can be employed to study other overdetermined problems. There are already some positive answers in this direction.
For instance, similar comparison techniques have been used in~\cite{Bor_Maz_2-I,Bor_Maz_2-II} to study static spacetimes with positive cosmological constant.
Also, as it will be shown in an upcoming work, our strategy allows to study problem~\eqref{eq:prob_SD} in higher dimension as well.
There are of course many other potential applications that deserve future investigation. For instance, a very natural direction would be the study of the problem $\De u=\delta_o$ inside a bounded domain containing the origin, since the rotationally symmetric solutions to this equation are very similar to the ones considered in this paper (namely, they are the same as~\eqref{eq:ST} but with negative values of $R^2$).
Another possible attempt would be to generalize the PDE considered in this paper, for instance considering the equation $\De u=f(u)$, for a suitable family of functions $f$.

%%%%%%%%%%%%%%%%%%%%%%%
%%% PUNTO 4
%%%%%%%%%%%%%%%%%%%%%%%

%It would also be interesting to understand whether our techniques may allow to improve on known results in the literature. As a particularly relevant example, in~\cite{Mag_Pog_3} an estimate for the distance of the maximum points from the boundary is provided for solutions to~\eqref{eq:prob_SD}. This is achieved through a comparison argument with Serrin's solution~\eqref{eq:serrin_ST} by means of a classical elliptic inequality (see the proof of~\cite[Lemma~2.2]{Mag_Pog_3} for more details). It is reasonable to expect that our refined comparison argument (more precisely the elliptic inequality and corresponding gradient estimate discussed in Subsection~\ref{sub:grad_est}) could allow to improve on this result. 

It is also reasonable to expect that our comparison technique may find applications outside the realm of overdetermined boundary value problems. As a particularly relevant example, in~\cite{Mag_Pog_3} an estimate for the distance of the maximum points from the boundary is provided for solutions to~\eqref{eq:prob_SD}. This is achieved through a comparison argument with Serrin's solution~\eqref{eq:serrin_ST} by means of a classical elliptic inequality (see the proof of~\cite[Lemma~2.2]{Mag_Pog_3} for more details). One may wonder whether our comparison argument (more precisely the elliptic inequality and corresponding gradient estimate discussed in Subsection~\ref{sub:grad_est}) can be exploited to prove similar results.

%%%%%%%%%%%%%%%%%%%%%%%
%%% PUNTO 5
%%%%%%%%%%%%%%%%%%%%%%%

%Finally, it would be interesting to investigate other problems coming from physical models. Remaining in the realm of fluid-dynamics, for instance, similar symmetry results on annular domains have been recently discussed for the Euler equation in~\cite{Ham_Nad}. We also have to mention the celebrated paper~\cite{Pol_Wei}, which considers the problem of the torsion of a bar with holes. From a mathematical point of view, this problem is really similar to the one discussed here, the only difference being the boundary condition in the inner boundary components. 

We conclude by mentioning a couple of other problems coming from physical models, that are somehow related to ours. Remaining in the realm of fluid-dynamics, similar symmetry results on annular domains have been recently discussed for the Euler equation in~\cite{Ham_Nad}. 
Another problem that is worth mentioning is the so called torsion problem, modeling the torsion of a bar with holes.  
 From a mathematical viewpoint, this problem is very similar to the one discussed here, the only difference being the boundary condition in the inner boundary components. 
One of the main papers on this problem is the well known~\cite{Pol_Wei}, where Schwarz symmetrization~\cite{Schwarz} is used to prove that the rotationally symmetric solutions to the torsion problem are characterized as the ones maximizing torsional rigidity. Among the recent progresses on this problem, we mention~\cite{Pay_Phi}, based again on the moving plane method, and~\cite{Cra_Fra_Gaz}.

\subsection{Plan of the paper}
%%%%%%%%%%%%%%

In the rest of the paper we will prove the results stated in this section.
%Section~\ref{sec:comparison_geometry} is dedicated to the setup of the 
%machinery which allows us to compare a solution $(\Om,u)$ with the 
%right ring-shape model solution. In particular, the proof
%of Theorem~\ref{thm:PMS} is contained therein.
%The comparison machinery is then 
%employed in Section~\ref{sec:grad_est} to prove a sharp upper bound for
%the gradient of $u$. 
Section~\ref{sec:comparison_geometry} is dedicated to the proof
of Theorem~\ref{thm:PMS}. This result is crucial as it grants us that Definition~\ref{def:virtual_mass} is well posed, which in turn allows us to set up our comparison machinery. This is the topic of Section~\ref{sec:grad_est}, where the important notion of pseudo-radial function is introduced and exploited to prove a sharp upper bound for
the gradient of $u$. 
In Section~\ref{sec:theo_B} we will  
see that the gradient estimates lead to some 
curvature bounds for $\Gamma_o$, $\Gamma_i$, and ultimately to the proof of Theorem~\ref{thm:B}.
Such curvature bounds will also be used in Section~\ref{sec:theo_A},
suitably coupled with the Pohozaev Identity,
to obtain Theorem~\ref{thm:const_grad}.
Finally, Section~\ref{Sec:bif} is devoted to the proof of
Theorem~\ref{thm:bifurcation}.

%%%%%%%%%%%%%%
%%%%%%%%%%%%%%
%%%%%%%%%%%%%%
\section{Proof of Theorem~\ref{thm:PMS}: the expected core radius of a region}
\label{sec:comparison_geometry}
%%%%%%%%%%%%%%
%%%%%%%%%%%%%%
%%%%%%%%%%%%%%

This section is devoted to the proof of the following theorem, which implies at once Theorem~\ref{thm:PMS}, and in turns the fact that the expected core radius of a region $N \in \pi_0(M \setminus {\rm MAX}(u))$ is well defined, nonnegative and vanishes if and only if $(\Om, u)$ is equivalent to a Serrin's solution.

\begin{theorem}
\label{thm:main_ST}
Let $(\Om, u)$ be a solution to problem~\ref{eq:prob_SD}, where $\Om$ is an arbitrary bounded domain $\Om$, with smooth boundary.
Let $N$ be a 
connected component of $\Om\setminus{\rm MAX}(u)$ and suppose that $\overline\tau (N) \leq 1$, i.e.
$$
\max_{\pa\Om \cap \overline{N}}\frac{ |\D u|}{\sqrt{2\umax}} \, \leq \, 1 \, .
$$
%Suppose that $|\D u|\leq\sqrt{2\umax}$ on 
%$\Gamma(N)$.
Then, up to suitable dilations and translations, 
$(\Om,u)$ coincides with Serrin's solution~\eqref{eq:serrin_ST}.
\end{theorem}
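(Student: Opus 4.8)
The strategy is to run a localized Weinberger–type argument based on the $P$-function
\[
P \,:=\, |\D u|^2 + 2u \qquad \text{on }\Om .
\]
Recall that $u\in C^\infty(\overline\Om)$, that $u>0$ in $\Om$ by the strong maximum principle, and hence that ${\rm MAX}(u)$ is a nonempty compact subset of the open set $\Om$. Since $\De u=-2$, one computes
\[
\De P \,=\, 2\,|\DD u|^2 - 4 \,\geq\, (\De u)^2 - 4 \,=\, 0,
\]
using the pointwise inequality $|\DD u|^2\geq\tfrac12(\De u)^2$, valid in dimension $2$, with equality at a point if and only if $\DD u=-{\rm Id}$ there. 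Thus $P$ is subharmonic on $\Om$.

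\medskip
\noindent I would next analyse the region $N$. Since $\Om$ is connected and ${\rm MAX}(u)\neq\emptyset$, we have $N\subsetneq\Om$; moreover $\pa N\cap\Om\subseteq{\rm MAX}(u)$ (a boundary point of $N$ sitting in the open set $\Om\setminus{\rm MAX}(u)$ would have a whole connected neighbourhood inside $N$), so $\pa N=(\pa N\cap\pa\Om)\sqcup(\pa N\cap{\rm MAX}(u))$. Both pieces are nonempty: if $\pa N\cap\pa\Om=\emptyset$ then $u$ would be superharmonic on $N$ with boundary value $\umax$, forcing $u\equiv\umax$ on $N$, absurd; if $\pa N\cap{\rm MAX}(u)=\emptyset$ then $N$ would be open and closed in $\Om$, hence $N=\Om$, again absurd. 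On $\pa N\cap\pa\Om=\pa\Om\cap\overline N$ we have $u=0$ and $P=|\D u|^2\leq 2\umax$ by the hypothesis $\overline\tau(N)\leq1$; on $\pa N\cap{\rm MAX}(u)$ we have $u=\umax$ and $\D u=0$ (interior maximum points), so $P=2\umax$. By subharmonicity, $P\leq 2\umax$ on $\overline N$.

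\medskip
\noindent The crucial step is to upgrade this to $P\equiv 2\umax$ on $N$. Fix $q\in\pa N\cap{\rm MAX}(u)$; then $\D u(q)=0$ and $H:=\DD u(q)$ satisfies $H\leq 0$, ${\rm tr}\,H=-2$. Suppose first $q$ is \emph{not} isolated in ${\rm MAX}(u)$. Then $H$ cannot be negative definite (otherwise $\umax-u$ has positive definite quadratic part at $q$, making $q$ isolated), so $H$ has eigenvalues $0$ and $-2$; choose orthonormal coordinates centred at $q$ with $e_2$ the $(-2)$-eigendirection, so $\pa^2_{x_2}u(q)=-2\neq0$. Applying the implicit function theorem to $\pa_{x_2}u$, the set $\{\pa_{x_2}u=0\}$ is, near $q$, a graph $\{x_2=\varphi(x_1)\}$ along which $u(x_1,\cdot)$ attains its local maximum; this organizes $\{u<\umax\}$ near $q$ into (at most) the two sides of that graph, and since $q\in\pa N$ one of these sides lies in $N$, so $q+s\,t\,e_2\in N$ for all small $t>0$ and a suitable sign $s\in\{\pm1\}$. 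On the other hand, $\D u(q)=0$ gives $\DD P(q)=2(H^2+H)={\rm diag}(0,4)$, whence
\[
P(q+s\,t\,e_2)\,=\,2\umax+2t^2+O(t^3)\,>\,2\umax
\]
for small $t>0$, contradicting $P\leq 2\umax$ on $\overline N$. Hence every point of $\pa N\cap{\rm MAX}(u)$ is isolated in ${\rm MAX}(u)$. Picking one such $q$, a punctured neighbourhood of $q$ lies in $\{u<\umax\}$, is connected, and meets $N$, hence lies in $N$; so $\overline N$ contains a ball centred at $q$ on which the subharmonic function $P$ attains its maximum value $2\umax$ at the interior point $q$. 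Therefore $P\equiv 2\umax$ near $q$, and then on all of $N$ by connectedness. I expect this step, and in particular controlling the local structure of ${\rm MAX}(u)$ near $\pa N$ (ruling out positive–dimensional pieces or cusps, via the forced identity $\DD u(q)={\rm diag}(0,-2)$ coupled with $\DD P(q)={\rm diag}(0,4)$, so as to bypass a delicate interior–ball/Hopf argument for $N$ at $q$), to be the main obstacle.

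\medskip
\noindent Finally, $P\equiv 2\umax$ on $N$ forces $\De P=0$, i.e.\ equality in the inequality above, so $\DD u=-{\rm Id}$ on $N$; integrating on the connected set $N$ gives $u=c-\tfrac12|x-x_0|^2$ for some $x_0\in\R^2$ and $c>0$. The function $v:=u-(c-\tfrac12|x-x_0|^2)$ is harmonic in $\Om$ and vanishes on the open set $N$, hence $v\equiv0$ in $\Om$; therefore $\pa\Om=\{u=0\}=\{|x-x_0|=\sqrt{2c}\}$ is a circle, $\Om=B(x_0,\sqrt{2c})$, and after translating by $x_0$ and rescaling to normalise the radius we recover exactly Serrin's solution $(\Om_0,u_0)$.
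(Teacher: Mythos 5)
Your proposal is correct, but it reaches the conclusion by a genuinely different route than the paper. The shared ingredient is Weinberger's $P$-function: like the paper (inside Proposition~\ref{pro:main_ST}), you use subharmonicity of $P=|\D u|^2+2u$ and the hypothesis on $\pa\Om\cap\overline N$ to get $P\le 2\umax$ on $\overline N$. The difference lies in how the merely partial boundary information is upgraded to rigidity. The paper proves the monotonicity of the weighted flux $U(t)$ of~\eqref{eq:Up_ST} and combines it with the {\L}ojasiewicz inequality to rule out $\mathscr{H}^1\big({\rm MAX}(u)\cap\overline N\big)>0$, so that $N$ is the unique component of $\Om\setminus{\rm MAX}(u)$ and the global statement (Proposition~\ref{thm:shen_ST}) applies. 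You instead argue locally at a point $q\in\pa N\cap{\rm MAX}(u)$: if $q$ were non-isolated in ${\rm MAX}(u)$, then $\DD u(q)={\rm diag}(0,-2)$, hence $\D P(q)=0$ and $\DD P(q)=2\big(\DD u(q)^2+\DD u(q)\big)={\rm diag}(0,4)$, and the second-order expansion of $P$ along the degenerate direction produces points of $N$ with $P>2\umax$, a contradiction; an isolated $q$ then gives a full ball inside $\overline N$, the strong maximum principle yields $P\equiv2\umax$ on $N$, so $\DD u=-{\rm Id}$ there, and unique continuation of the harmonic difference spreads the rigidity to all of $\Om$. What each approach buys: yours is more elementary and self-contained (no {\L}ojasiewicz inequality, no level-set integrals or coarea-type considerations), but it is intrinsically two-dimensional, since the crucial positivity $\DD P(q)(e_2,e_2)=2\lambda(\lambda+1)=4>0$ exploits that the nonzero Hessian eigenvalue at a degenerate interior maximum must equal $-2<-1$; in higher dimensions the degenerate spectrum can be, e.g., $\{0,-1,-1\}$ and the quadratic term vanishes, whereas the paper's monotonicity/{\L}ojasiewicz argument is the one that generalizes and is reused elsewhere in their program. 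Two expository points you should tighten, though neither is a gap: when arguing that one side of the graph $\{x_2=\varphi(x_1)\}$ lies in $N$, also treat accumulation points of $N$ that sit on the graph (openness of $N$ plus the fact that the graph has empty interior handles this), and at the very end note explicitly that a closed smooth boundary component contained in the circle $\{|x-x_0|=\sqrt{2c}\}$ must be the whole circle, whence $\Om=B(x_0,\sqrt{2c})$.
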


\noindent
The first step in the proof is to use a Maximum Principle argument to establish the following weaker version of Theorem~\ref{thm:main_ST}.

\begin{proposition}
\label{thm:shen_ST}
Let $(\Om, u)$ be a solution to problem~\ref{eq:prob_SD}, where $\Om$ is an arbitrary bounded domain $\Om$, with smooth boundary, and suppose that 
$$
\max_{\pa\Om}\frac{ |\D u|}{\sqrt{2\umax}} \, \leq \, 1 \, .
$$
Then, up to suitable dilations and translations, 
$(\Om,u)$ coincides with Serrin's solution~\eqref{eq:serrin_ST}.
\end{proposition}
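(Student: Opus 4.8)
The plan is to run a Weinberger-type $P$-function argument, so that the overdetermining inequality on $\pa\Om$ gets matched against the value of $P$ at an interior maximum point of $u$. Set
\begin{equation*}
P \,:=\, |\D u|^2 + 2u \qquad \hbox{on } \overline\Om \, .
\end{equation*}
Before anything else I would record two elementary facts. Since $\Delta u = -2 < 0$ and $u = 0$ on $\pa\Om$, the maximum principle gives $u > 0$ in $\Om$; hence $\umax > 0$ and ${\rm MAX}(u)$ is a nonempty compact subset of the interior of $\Om$. Moreover, on $\pa\Om$ one has $u \equiv 0$, so $P = |\D u|^2$ there, and the hypothesis $\max_{\pa\Om}|\D u|^2/(2\umax) \le 1$ reads precisely $\max_{\pa\Om} P \le 2\umax$.

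The next step is to show that $P$ is subharmonic. By the Bochner formula in $\R^2$ (no curvature term) and the fact that $\Delta u \equiv -2$ is constant (so $\D\Delta u = 0$), one gets $\tfrac12\Delta|\D u|^2 = |\DD u|^2$, and the Cauchy--Schwarz inequality $|\DD u|^2 \ge \tfrac12(\Delta u)^2 = 2$ yields
\begin{equation*}
\Delta P \,=\, \Delta|\D u|^2 + 2\Delta u \,=\, 2|\DD u|^2 - 4 \,\ge\, 0 \qquad \hbox{in } \Om \, .
\end{equation*}
Now combine the two ingredients: at any $p \in {\rm MAX}(u)$ we have $\D u(p) = 0$ and $u(p) = \umax$, hence $P(p) = 2\umax \ge \max_{\pa\Om} P$. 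Thus the subharmonic function $P$ attains its maximum over $\overline\Om$ at the interior point $p$, and by the strong maximum principle $P \equiv 2\umax$ on $\Om$. In particular $\Delta P \equiv 0$, which forces equality throughout the chain above: $|\DD u|^2 \equiv 2$, i.e. equality in Cauchy--Schwarz, so $\DD u \equiv -\,\mathrm{Id}$ on $\Om$.

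Integrating $\DD u \equiv -\,\mathrm{Id}$ gives $u(x) = \umax - \tfrac12|x - x_0|^2$ for some $x_0 \in \R^2$ (then the unique maximum point), and the boundary condition $u = 0$ on $\pa\Om$ forces $\Om = B(x_0, \sqrt{2\umax})$. Finally, using the translation- and scaling-invariance~\eqref{eq:scaling} of problem~\eqref{eq:prob_SD}, after translating $x_0$ to the origin and dilating by $\lambda = 1/\sqrt{2\umax}$ one lands exactly on Serrin's solution~\eqref{eq:serrin_ST}. As for the main difficulty: the argument is essentially self-contained, and the only point requiring genuine care is ensuring that $P$ realizes its maximum at an interior point so that the strong maximum principle applies --- this is exactly the place where the hypothesis $\max_{\pa\Om}|\D u|^2 \le 2\umax$ enters, being compared with the interior value $P(p) = 2\umax$. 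Everything else (Bochner, the rigidity in Cauchy--Schwarz, and the integration of $\DD u$) is routine.
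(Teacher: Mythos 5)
Your argument is correct and is essentially the paper's own proof: the same $P$-function $|\D u|^2+2u$, the same Bochner/Cauchy--Schwarz subharmonicity, the same comparison of the boundary hypothesis with the value $2\umax$ at an interior maximum point of $u$, and the same rigidity via equality giving $\DD u\equiv -\,\mathrm{Id}$. You merely spell out the final integration to Serrin's solution, which the paper leaves implicit.
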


\begin{proof}
The Bochner formula coupled with the first equation in~\eqref{eq:prob_SD} gives
\begin{equation}
\label{eq:bochner_maxpr_ST}
\De\big(|\D u|^2\,+\,2u\big)\,=\,2\,\Big[\,|\DD u|^2-\frac{(\De u)^2}{2}\,\Big]\,\geq\, 0\,.
\end{equation}
Since $|\D u|^2\leq 2\umax$ on $\pa\Om$ by hypothesis, 
the Maximum Principle implies that
$$
\max_{\Om}\Big(|\D u|^2+2u\Big)\,=\,\max_{\pa\Om}|\D u|^2\,\leq\,2\umax\,.
$$
On the other hand, at any maximum point for $u$ in $\Omega$
it holds $|\D u|^2+2 u=2\umax$.
Hence, by the Strong Maximum Principle we obtain that $|\D u|^2+2 u \equiv 2\umax$ in $\Om$.
In turn, the equality holds in~\eqref{eq:bochner_maxpr_ST},
which yields $\na^2u=-g_{\R^2}$ and hence the conclusion.
\end{proof}

Having fixed a connected component $N$ of $\Om \setminus {\rm MAX}(u)$, we now introduce the function $U:[0,\umax)\rightarrow\R$ given by
\begin{equation}
\label{eq:Up_ST}
t \,\, \longmapsto \,\, U(t) 
\, =\, 
\frac1{(\umax-t)}  \int\limits_{ \{ u = t \}\cap\overline N}
\!\!\!\!  |\D u|\, \rmd \sigma.
\end{equation}
This function is well defined, because the integrand is globally bounded and 
classical results ensure that the level sets of $u$ have finite $\mathscr{H}^1$-measure.
%Note that 
%\[
%U(0)
%\,=\,
%\frac1\umax
%\int\limits_{\pa \Om \cap \overline{N}} \!\!\!\! |\D u| \, \rmd \sigma.
%\]

The second step in the proof of Theorem~\eqref{thm:main_ST}
consists in showing that the function $U$ is 
%monotonically 
nonincreasing.

\begin{proposition}
%[Monotonicity]
\label{pro:main_ST}
Let $(\Om, u)$ be a solution to problem~\ref{eq:prob_SD}, where $\Om$ is an arbitrary bounded domain $\Om$, with smooth boundary.
Let $N$ be a 
connected component of $\Om\setminus{\rm MAX}(u)$ and suppose that $\overline\tau (N) \leq 1$, i.e.
$$
\max_{\pa\Om \cap \overline{N}}\frac{ |\D u|}{\sqrt{2\umax}} \, \leq \, 1 \, .
$$
Then the function $U$ defined in~\eqref{eq:Up_ST} is continuous and nonincreasing. 
\end{proposition}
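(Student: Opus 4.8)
The plan is to write $U(t)=F(t)/(\umax-t)$ with $F(t):=\int_{\{u=t\}\cap\overline N}|\D u|\,\rmd\sigma$, to identify $F$ with a volume, to prove the pointwise estimate $|\D u|^2\le 2(\umax-u)$ on all of $\overline N$ (the only place the hypothesis $\overline\tau(N)\le1$ is used), and then to read off continuity and monotonicity of $U$ from a direct computation of $U'$.

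First I would establish the identity $F(t)=2\,|\{u>t\}\cap N|$ for $t\in(0,\umax)$, by applying the Gauss--Green formula to $\D u$ on $A_t:=\{u>t\}\cap N$. The boundary of $A_t$ splits into the level portion $\{u=t\}\cap\overline N$, where the outer unit normal is $-\D u/|\D u|$ and the flux density equals $-|\D u|$, and into the part of $\partial N$ contained in $\{u\ge t\}$; since $\partial N\subseteq\pa\Om\cup{\rm MAX}(u)$ and $u\equiv0$ on $\pa\Om$, this latter part lies in ${\rm MAX}(u)$, where $\D u=0$, so it carries no flux regardless of the shape of ${\rm MAX}(u)$. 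Together with $\int_{A_t}\De u=-2|A_t|$ this gives $F(t)=2|A_t|$. Because $\De u\equiv-2$ forces $|\{u=t\}|=0$ and $|\{\D u=0\}|=0$, the map $t\mapsto|\{u>t\}\cap N|$ is continuous, hence so is $U$; and the coarea formula gives $|\{u>t\}\cap N|=\int_t^{\umax}\!\big(\int_{\{u=s\}\cap N}|\D u|^{-1}\rmd\sigma\big)\rmd s$ with an integrable inner integral, so $F$ is absolutely continuous with $F'(t)=-2\int_{\{u=t\}\cap\overline N}|\D u|^{-1}\rmd\sigma$ for a.e. $t$.

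Next I would run the Bochner-type argument of Proposition~\ref{thm:shen_ST} on the domain $N$ in place of $\Om$. Set $P:=|\D u|^2+2u$; by~\eqref{eq:bochner_maxpr_ST} it is subharmonic in $\Om$. On $\partial N\cap\pa\Om$ we have $u=0$ and $|\D u|^2\le2\umax$ by the assumption $\overline\tau(N)\le1$, so $P\le2\umax$ there; on $\partial N\cap{\rm MAX}(u)$ we have $u=\umax$ and $\D u=0$, so $P=2\umax$ there. Since these two sets exhaust $\partial N$, the maximum principle on the bounded set $\overline N$ gives $P\le2\umax$ on $\overline N$, i.e. $|\D u|^2\le2(\umax-u)$ there. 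Consequently, on a level set $\{u=t\}\cap\overline N$ one has pointwise $|\D u|\le 2(\umax-t)/|\D u|$, and integrating, $F(t)\le 2(\umax-t)\int_{\{u=t\}\cap\overline N}|\D u|^{-1}\rmd\sigma=-(\umax-t)F'(t)$ for a.e. $t$. Therefore $U'(t)=\big[(\umax-t)F'(t)+F(t)\big]/(\umax-t)^2\le0$ a.e., and since $U$ is locally absolutely continuous it is nonincreasing on $[0,\umax)$.

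The step I expect to be the main obstacle is the first one: making the Gauss--Green computation on $A_t$ fully rigorous in the presence of the set ${\rm MAX}(u)\subseteq\partial N$, about which nothing is assumed a priori, and of the non-maximal critical points of $u$ that may sit inside $N$. What makes it work is that $\D u$ vanishes identically on ${\rm MAX}(u)$, so the potentially irregular portion of $\partial N$ contributes nothing to the flux, while $\De u\equiv-2$ rules out level sets and critical sets of positive measure, so that for a.e. $t$ the level set $\{u=t\}$ is a smooth curve and the coarea formula applies without further care.
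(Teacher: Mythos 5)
Your proposal is correct, and the hypothesis is used exactly where the paper uses it (to get the bound $|\D u|^2+2u\leq 2\umax$ on $N$), but the route is genuinely different in both halves. For the gradient bound, you apply the weak maximum principle directly on the possibly irregular open set $N$, evaluating $P=|\D u|^2+2u$ exactly on $\pa N\cap{\rm MAX}(u)$ (where $u=\umax$ and $\D u=0$); this is legitimate, since the weak maximum principle needs no boundary regularity, and it replaces the paper's exhaustion by $N_\eta=N\cap\{u<\umax-\eta\}$ along regular levels followed by a limit $\eta\to0^+$. For the monotonicity, the paper never differentiates $U$: it writes the bound as ${\rm div}\big(\D u/(\umax-u)\big)\leq0$ as in~\eqref{eq:div_Du_ST}, integrates once by parts over $\{t_1\leq u\leq t_2\}\cap\overline N$, and reads off the two-point comparison $U(t_2)\leq U(t_1)$ together with continuity, which comes for free from the absolute continuity of the bulk Lebesgue integral in~\eqref{eq:int_div_Du_ST}. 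You instead prove the identity $F(t)=2\,|\{u>t\}\cap N|$ by Gauss--Green on superlevel sets (the flux through the ${\rm MAX}(u)$-portion of the boundary vanishing because $\D u\equiv0$ there), then invoke the coarea formula to get $F'(t)=-2\int_{\{u=t\}\cap\overline N}|\D u|^{-1}\rmd\sigma$ a.e.\ and close with the pointwise bound $|\D u|^2\leq2(\umax-t)$ on the level set. Your version buys the nice reformulation $U(t)=2\,|\{u>t\}\cap N|/(\umax-t)$, which makes continuity transparent, at the price of justifying the generalized divergence theorem at non-regular values $t$ (fine here: $u$ is real-analytic, so $\pa(\{u>t\}\cap N)$ sits inside analytic varieties of finite $\mathscr{H}^1$-measure, every noncritical level point is in the reduced boundary with normal $-\D u/|\D u|$, and the critical and ${\rm MAX}(u)$ portions carry no flux) together with the coarea/a.e.-differentiation bookkeeping; the paper's single integration by parts between two levels avoids all of this. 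One small remark: for $t\in(0,\umax)$ one has $\{u=t\}\cap\overline N=\{u=t\}\cap N$ since $u=0$ on $\pa\Om$ and $u=\umax$ on ${\rm MAX}(u)$, so your $F$ agrees with the integral in~\eqref{eq:Up_ST}, and your separate treatment of $t=0$ (where the boundary portion is the smooth set $\pa\Om\cap\overline N$) correctly extends continuity and monotonicity to the whole interval $[0,\umax)$.
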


\begin{proof} Given a region $N \in \pi_0(\Om \setminus {\rm MAX}(u))$, let us consider the domain $N_\eta=N\cap \{ u < \umax -\eta\}$, where $\eta\in \R$ is small enough so that the level set $\{u=\eta \}$ is regular. Notice that, since the critical level sets of $u$ are discrete (see~\cite{Sou_Sou_1}), the parameter $\eta$ introduced above can be chosen as small as desired. Applying the Maximum Principle to~\eqref{eq:bochner_maxpr_ST}, we obtain that 
$$
\max_{N_\eta} \, \big( |\na u|^2 + 2 u \big) \, \leq \, \max_{\pa N_\eta} \, \big(|\na u|^2 + 2 u \big)
$$
On the other hand one has that
$$
\lim_{\eta\to 0^+}\max_{\pa N_\eta}(|\D u|^2+2u)\,\leq\, 2\umax\,.
$$
In fact, $|\D u|^2+2u=|\D u|^2\leq 2\umax$ on $\pa \Om \cap \overline{N}$ by hypothesis and
$$
\lim_{\eta\to 0^+} \max_{\{u = \umax - \eta \}} (|\D u|^2+2u) \, = \, 2 \umax \,,
$$
since $|\D u|\to 0$ as we approach ${\rm MAX}(u)$. It follows that
$$
|\D u|^2+2u\,\leq\,2\umax
$$
on the whole $N$. In particular, using that $\De u=-2$, we obtain
\begin{equation}
\label{eq:div_Du_ST}
%\notag
{\rm div}\left(\frac{\D u}{\umax-u}\right)
%\,&=\,\frac{\De u}{\umax-u}+\,\frac{|\D u|^2}{(\umax-u)^2}\,
%\\
%\notag
\,=\,
\frac{|\D u|^2+2u-2\umax}{(\umax-u)^2}
\,\leq\,0\,.
\end{equation} 
We now integrate by parts inequality~\eqref{eq:div_Du_ST} on the finite perimeter set $\{t_1\leq u\leq t_2\}\cap\overline N$, for some $0\leq t_1<t_2<\umax$. Applying the divergence theorem, we deduce that
\begin{multline}
\label{eq:int_div_Du_ST}
\int\limits_{\{u=t_1\}\cap\overline N}
\!\!\!\!\bigg\langle\frac{\D u}{\umax-u}\,\bigg|\,{\rm n}\bigg\rangle\,\rmd\sigma
\,+
\!\!\!\int\limits_{\{u=t_2\}\cap\overline N}\!\!\!\!\bigg\langle\frac{\D u}{\umax-u}\,\bigg|\,{\rm n}\bigg\rangle\,\rmd\sigma\\
\,=\,
\int\limits_{\{t_1\leq u\leq  t_2\}\cap\overline N}\!\!\!\!\!
\frac{|\D u|^2+2u-2\umax}{(\umax-u)^2}
\,\,\rmd\mu\,\leq\,0\,.
\end{multline}
Notice that the (measure theoretic) unit normal ${\rm n}$ is well defined $\mathscr{H}^1$-a.e. on $\{u = t_1\}$ and on $\{u=t_2\}$. The thesis follows noticing that
\begin{equation*}
U(t) 
\,\, =\,\, 
\frac1{(\umax-t)}  \!\!\!\!\!\!\! \int\limits_{ (\{ u = t \}\cap\overline N) \setminus {\rm Crit}(u)}
\!\!\!\!\!\!\!\!  |\D u|\, \rmd \sigma \,\,\, = \!\!\!\!\!\!\!\!\!\!\!\!\!\!\!  \int\limits_{(\{u=t\}\cap\overline N) \setminus {\rm Crit}(u)}
\!\!\!\!\!\!\!\bigg\langle\frac{\D u}{\umax-u}\,\bigg|\, \pm{\rm n}\bigg\rangle\,\rmd\sigma \,\, =  \!\!\!\!\!\!\! \int\limits_{(\{u=t\}\cap\overline N)}
\!\!\!\!\bigg\langle\frac{\D u}{\umax-u}\,\bigg|\, \pm{\rm n}\bigg\rangle\,\rmd\sigma \, ,
\end{equation*}
since the outer unit normal ${\rm n}$ coincides with $\pm\na u/|\na u|$ on $(\{u=t\}\cap\overline N) \setminus {\rm Crit}(u)$. The continuity of $U$ is a straightforward consequence of the absolute continuity 
of the Lebesgue integral on the right-hand side of the equality in~\eqref{eq:int_div_Du_ST}.
\end{proof}

Combining Propositions~\ref{thm:shen_ST} and~\ref{pro:main_ST}, we are finally able to prove the main result of this section.

\begin{proof}[Proof of Theorem~\ref{thm:main_ST}.]
We claim that if $\overline\tau (N) \leq 1$, then $\pi_0(\Om \setminus {\rm MAX}(u)) = \{N\}$. In other words, $N$ is the only connected component of $\Om \setminus {\rm MAX}(u)$, under our assumptions. Observe that such a claim implies that $\pa \Om \cap \overline{N} = \pa \Om$, and thus the thesis follows from Proposition~\ref{thm:shen_ST}. To prove the claim, we argue by contradiction. If $N$ is not the unique connected component of $\Om \setminus {\rm MAX}(u)$, then it must be separated by the other ones, and thus $\mathscr{H}^1\big({\rm MAX}(u)\cap \overline{N}\big)> 0$.
On the other hand, using the classical {\L}ojasiewicz Inequality (see, e.g.,~\cite{Lojasiewicz_1}) and the  compactness of ${\rm MAX}(u)$, there exists a neighborhood $V$ of ${\rm MAX}(u)$ and two constants $c>0$ and $1/2\leq\theta<1$ such that
$\umax-u<1$ in $V$ and 
\[
|\na u(x)|
\,\geq\,
c\,(\umax-u(x))^\theta,
\]
for every $x\in V$. In particular, since $\{u=t\}\cap\overline N\subseteq V$
for every $t$ sufficiently close to $\umax$, we have that
\begin{equation*}
%\label{inf_bound_via Loja}
\frac{1}{\umax}\, \int\limits_{\pa \Om \cap \overline{N}} \!  |\D u| \, \rmd \sigma \, = \, 
U(0)
\, \geq \,
U(t)
\,\geq\,
\frac1{(\umax-t)^{1-\theta}}\,\big|\{u=t\}\cap\overline N\big| \, ,
\end{equation*}
where we also used the monotonicity formula proven in Proposition~\ref{pro:main_ST}. It is now easy to see that if $\mathscr{H}^1\big({\rm MAX}(u)\cap \overline{N}\big)> 0$, the rightmost hand side is unbounded, as $t \to \umax^-$. This gives the desired contradiction. 
\end{proof}
For the sake of completeness we now show how Theorem~\ref{thm:PMS} can be easily deduced from Theorem~\ref{thm:main_ST}. 
\begin{proof}[Proof of Theorem~\ref{thm:PMS}]
To check that $R(N)$ is well defined, it is sufficient to exclude that $\overline\tau(N)<1$, for some $N \in \pi_0(\Om \setminus {\rm MAX}(u))$. Indeed, Theorem~\ref{thm:main_ST} says that if $\overline\tau (N)\leq 1$, then $\overline\tau(N)=1$. To prove the rigidity part, it is sufficient to observe that if $R(N)=0$, then $\overline\tau(N)=1$, and one can apply again Theorem~\ref{thm:main_ST}.
\end{proof}

%%%%%%%%%%%%%%%%%%%%%%%%%%%%%%%%%%%%%%%%
%%%%%%%%%%%%%%%%%%%%%%%%%%%%%%%%%%%%%%%%
%%%%%%%%%%%%%%%%%%%%%%%%%%%%%%%%%%%%%%%%

\section{Gradient estimates}
\label{sec:grad_est}

%%%%%%%%%%%%%%%%%%%%%%%%%%%%%%%%%%%%%%%%
%%%%%%%%%%%%%%%%%%%%%%%%%%%%%%%%%%%%%%%%
%%%%%%%%%%%%%%%%%%%%%%%%%%%%%%%%%%%%%%%%

The aim of this section is to compare the gradient $|\na u|$ of a generic solution $(\Om, u)$ to problem~\ref{eq:prob_SD}
with the gradient $|\na u_R|$ of a ring-shaped model solution $(\Om_R, u_R)$. In order for the comparison to make sense, we need to consider suitable normalisations, as described in Subsection~\ref{sub:compa}. A crucial role in this procedure will be played by the concept of {\em expected core radius} $R=R(N)$ of a region $N \subseteq \Om \setminus {\rm MAX}(u)$, whose existence is now guaranteed by Thoerem~\ref{thm:PMS}. To be more precise, we notice that, if $(\Om,u)$ is a solution to problem~\eqref{eq:prob_SD}, then $(\Omega_\lambda,u_\lambda)$, with 
\begin{equation}
\label{eq:scaling}
\Omega_\lambda\,=\,\{\lambda \,x\,:\,x\in\Omega\}\, \quad\hbox{ and }\quad u_\lambda(x)= {\lambda^2}\,u(x/\lambda)\,,
\end{equation}
is also a solution for every $\lambda> 0$. This means that we are allowed to rescale $u$, provided that we also apply a suitable homothety to the domain $\Omega$. With the notation introduced in~\eqref{eq:umax_ST}, it will be convenient to adopt the following normalisation.

\begin{normalization}
\label{norm}
Let $(\Om, u)$ be a solution to problem~\eqref{eq:prob_SD}, let $N$ be a connected component of $\Om\setminus{\rm MAX}(u)$, and let $R = R(N) \in [0,1)$ be the expected core radius associated with the region $N$. Up to rescale the domain and the function as in~\eqref{eq:scaling}, we assume that 
\begin{equation}
\label{eq:rescaling}
\umax \, = \, (u_R)_{\max} \,=\,
\frac{1-R^2}{2}+R^2\log{R} \, .
\end{equation}
\end{normalization}

With this normalisation in force, we are going to compare, in Theorem~\ref{thm:min_pr_SD} below, the squared gradient of the solution $|\D u|^2$ on $N$ to the squared gradient $|\na u_R|^2$ of the ring-shaped model solution $(\Om_R, u_R)$ that satisfies $R=R(N)$. We agree that if $\overline\tau(N) < \sqrt{2}$, then the comparison will be drawn with the restriction of $|\na u_R|^2$ to the outer region $\Om_{R,o}$ of the model solution, otherwise the comparison will be drawn with the restriction of $|\na u_R|^2$ to the inner region $\Om_{R,i}$ (see~\eqref{eq:Om+-_ST}). More precisely, if $p$ is a point in $N$, we are going to bound $|\na u|^2(p)$ with the value of $|\na u_R|^2$ at a point where $u_R = u(p)$, belonging to either the outer or the inner region of the model solution $(\Om_R,u_R)$, according to what the NWSS of $N$ dictates. To make the computations effortable, we are going to introduce, in the next subsection, the notion of {\em pseudo-radial function}.

\subsection{The pseudo-radial functions}
\label{sub:pseudo_radial}

This subsection is aimed at defining {\em pseudo-radial functions}, that is, a functions that mimic the behavior of the radial coordinate $|x|$ in the rotationally symmetric solutions~\eqref{eq:ST}. As above, let $N \in \pi_0(\Om \setminus{\rm MAX}(u))$ and let $R=R(N)$ be its expected core radius. As in~\eqref{eq:ST}, we let $r_i = r_i(R) \in (0,R)$ be the smallest positive root of the function $ \rho \mapsto 1-\rho^2+2R^2\log \rho$, and we define the function
\begin{align*}
F_R:[0, (u_R)_{\max}]\times [r_i(R),1]\,\,&\longrightarrow\,\, \R
\\
\phantom{\qquad} (u,\psi)\,\,\longmapsto\,\, F_R(u,\psi)\,=\,2u-1&+\psi^2-2R^2\log\psi
\end{align*}
A simple computation shows that $\pa F_R/\pa\psi=0$ if and only if $\psi=R$. As a consequence of the Implicit Function Theorem we have that there exist two smooth functions

\begin{equation*}
\psi_-:[0,(u_R)_{\max}]\,\,\longrightarrow\,\,\left[r_i(R),R\right]\qquad \hbox{and}\qquad
\psi_+:[0,(u_R)_{\max}]\,\,\longrightarrow\,\,\left[R,1\right]\,,
\end{equation*}
such that $F_R(u,\psi_-(u))= 0 = F_R(u,\psi_+(u))$ for all $u\in[0,\umax(m)]$.
For future convenience, let us list some elementary properties of $\psi_+$ and $\psi_-$, that can be derived easily from their definition. 
\begin{itemize}
\item First of all, we can compute $\psi_+$, $\psi_-$ and their derivatives using the following formul\ae
\begin{equation}
\label{eq:def_Psi_SD}
u\,=\,\frac{1-\psi_{\pm}^2+2R^2\log\psi_\pm}{2}\,.
\end{equation}
\begin{equation}
\label{eq:dpsideu_SD}
\dot{\psi}_{\pm}\,=\,-\frac{\psi_{\pm}}{\psi_{\pm}^2- R^2}\,,\qquad \ddot{\psi}_{\pm}\,=\,2\,\dot{\psi}_{\pm}^3+\frac{\dot{\psi}_{\pm}^2}{{\psi}_{\pm}}\,.
\end{equation}

\item The function $\psi_-$ takes values in $[r_i(R),R]$, hence $\psi_-^2\leq R^2$ and from~\eqref{eq:dpsideu_SD} we deduce
$$
\dot\psi_-\,\geq\,0\,,\qquad \ddot\psi_-\,\geq\, 0\,,\qquad \lim_{u\to (u_R)_{\max}^-}\dot\psi_-\,=\,+\infty\,.
$$

\item The function $\psi_+$ takes values in $[R,1]$, hence $\psi_+^2\geq R^2$ and from the first formula in~\eqref{eq:dpsideu_SD} we deduce  that $\dot\psi_+$ is nonpositive and diverges as $u$ approaches $(u_R)_{\max}$. Moreover, the second formula in~\eqref{eq:dpsideu_SD} can be rewritten as
$$
\ddot\psi_+\,=\,\dot\psi_+^3\left(1+ R^2\psi_+^{-2}\right)\,,
$$ 
from which it follows $\ddot\psi_+\leq 0$.
Summing up, we have
$$
\dot\psi_+\,\leq\,0\,,\qquad \ddot\psi_+\leq 0\,,\qquad \lim_{u\to (u_R)_{\max}^-}\dot\psi_+\,=\,-\infty\,.
$$
\end{itemize}

Coming back to our case of interest, we are now going to use the functions $\psi_\pm$ in order to define a {\em pseudo-radial function} on $N$. To this end, we distinguish the cases where the NWSS OF $N$ is either above or below the threshold value $\sqrt{2}$.

\begin{definition}[Pseudo-radial functions]
\label{def:psudo}
Let $(\Om, u)$ be a solution to problem~\eqref{eq:prob_SD}, let $N$ be a connected component of $\Om\setminus{\rm MAX}(u)$, and let $R = R(N) \in [0,1)$ be the expected core radius associated with the region $N$. Also assume that the Normalisation~\ref{norm} is in force.
\begin{itemize}
\item[$(i)$] If $\overline\tau(N)<\sqrt{2}$, then we define the pseudo-radial function $\Psi_+$ as
\begin{equation}
\label{eq:pr_function_+}
\begin{split}
\Psi_+:\,N &\,\longrightarrow\, [R,1]
\\
p&\,\longmapsto\, \Psi_+(p):=\psi_+(u(p))\,. 
\end{split}
\end{equation}
Notice that, if $N$ is the outer region $\Om_{R,o}$ of the rotationally symmetric solution~\eqref{eq:ST} with core radius $R$, then, for every $p\in \Om_{R,o}$, the value of $\Psi_+(p) = \psi_+ (u_R(p))$ is equal to the value of the radial coordinate $|x|$ at $p$.
\smallskip
\item[$(ii)$] If $\overline\tau(N)>\sqrt{2}$, then we define the pseudo-radial function $\Psi_-$ as
\begin{equation}
\label{eq:pr_function_-}
\begin{split}
\Psi_-:\,N &\,\longrightarrow\, [r_i(R),R]
\\
p&\,\longmapsto\, \Psi_-(p):=\psi_-(u(p))\,. 
\end{split}
\end{equation}
Notice that, if $N$ is the inner region $\Om_{R,i}$ of the rotationally symmetric solution~\eqref{eq:ST} with core radius $R$, then, for every $p\in \Om_{R,i}$, the value of $\Psi_-(p) = \psi_- (u_R(p))$ is equal to the value of the radial coordinate $|x|$ at $p$.
\end{itemize}
\end{definition}

\begin{remark}
The threshold case $\overline\tau(N)=\sqrt{2}$ is not considered in the above definition. In fact, for that value one has $r_i(R)=R=1$, so either~\eqref{eq:pr_function_+} or~\eqref{eq:pr_function_-} would give us a pseudo-radial function that is just constant on the whole $N$, and as such not interesting. The reason for this issue should be traced back to the fact that no rotationally symmetric solution $(\Omega_R,u_R)$ has a boundary component with NWSS equal to $\sqrt{2}$, hence when $\overline\tau(N)=\sqrt{2}$ we do not have a a model to compare with. For this reason, in the future
our analysis will be mostly focused on the cases $\overline\tau(N)<\sqrt{2}$ and $\overline\tau(N)>\sqrt{2}$, whereas the case $\overline\tau(N)=\sqrt{2}$ will 
%usually need to 
be treated separately with ad hoc arguments.
\end{remark}

In analogy with~\eqref{eq:dpsideu_SD} and for future convenience, we point out that the following relationships hold true between the derivatives of the pseudo-radial function $\Psi$ and the potential $u$.
\begin{equation}
\label{eq:rel_u_gradient_Psi}
\D \Psi_{\pm}\,=\,(\dot{\psi}_\pm\circ u)\,\D u\,,\qquad\DD \Psi_\pm\,=\,(\dot{\psi}_\pm\circ u)\,\DD u\,+\,(\ddot{\psi}_\pm\circ u)\,du\otimes du\,.
\end{equation}

\begin{notation}
In the following sections, we will perform several formal computations. In order to simplify the notations, we will avoid to indicate the subscript $\pm$, and we will simply denote by $\Psi=\psi\circ u$ the pseudo-radial function on a region $N$ of $\Om\setminus{\rm MAX}(u)$, where we understand that $\Psi$ is defined by~\eqref{eq:pr_function_+} if we are in an outer region and by~\eqref{eq:pr_function_-} if we are in an inner region. When there is no risk of confusion, we will also avoid to explicitate the composition with $u$, namely, we will write $\psi, \dot\psi$ or $\ddot\psi$ instead of $\psi\circ u, \dot\psi \circ u$ or $\ddot\psi \circ u$, respectively. For instance, the formul\ae\ in~\eqref{eq:rel_u_gradient_Psi} will be simply written as
$$
\D \Psi\,=\,\dot{\psi}\,\D u\,,\qquad\DD \Psi\,=\,\dot{\psi}\,\DD u\,+\,\ddot{\psi}\,du\otimes du\,,
$$
\end{notation}
 From the definition of $\Psi$, given in~\eqref{eq:pr_function_+},~\eqref{eq:pr_function_-}, we can also easily recover an explicit formula for the comparison function $W_R = |\na u_R|^2 \circ \Psi$ as a function of the pseudo-radial function:
\begin{equation}
\label{eq:Wm}
W_R\,=\,\left(\frac{\Psi^2-R^2}{\Psi}\right)^{\!\!2}.
\end{equation}

%%%%%%%%%%%%%%%%%%%%%%%%%%%%%%%%%%%%%%%

\subsection{Gradient estimates}
\label{sub:grad_est}

%%%%%%%%%%%%%%%%%%%%%%%%%%%%%%%%%%%%%%%

We are now ready to state the main result of this section, in which we prove that the function $W = |\na u|^2$ is bounded from above by $W_R= |\na u_R|^2 \circ \Psi$, where $R=R(N)$ is the expected core radius of the region $N$, that we are considering.

\begin{theorem}[Gradient Estimates]
\label{thm:min_pr_SD}
Let $(\Om,u)$ be a solution to problem~\eqref{eq:prob_SD}, let $N$ be a connected component of $\Omega\setminus{\rm MAX}(u)$,  and let $R = R(N) \in [0,1)$ be the expected core radius associated with the region $N$. Also assume that  Normalization~\ref{norm} is in force. 
Then it holds 
$$
W \, \leq \,  W_R\,,\quad \hbox{i.e.} \quad|\D u| \, \leq \, |\D u_R| \circ \Psi \,,
%=\left|\frac{\Psi^2-R^2}{\Psi}\right|\,,
$$ 
on the whole $N$. Moreover, if $W=W_R$ at some point in the interior of $N$, then $(\Omega,u)$ coincides with the ring-shaped model solution with core radius $R$. 
\end{theorem}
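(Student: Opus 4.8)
The plan is to set up a scalar elliptic inequality for the ratio $W/W_R$ (equivalently for a suitably chosen function built from $|\nabla u|^2$ and the pseudo-radial function $\Psi$) and then invoke the maximum principle together with boundary control coming from the very definition of the expected core radius $R=R(N)$. First I would introduce the natural candidate quantity, something like $\beta := W/W_R = |\nabla u|^2 / (\,(\Psi^2-R^2)/\Psi\,)^2$, well defined on $N$ away from $\partial N \cap {\rm MAX}(u)$ (where $W_R \to 0$), and compute $\Delta \beta$ (or $\Delta$ of $\log$ of it, or of $\beta$ against the conformally modified metric $|\nabla u|^2 g_{\R^2}$, whichever keeps the algebra cleanest). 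The input for this computation is the Bochner formula $\Delta |\nabla u|^2 = 2|\nabla^2 u|^2 - 2(\Delta u)^2/2 + 2\langle \nabla u, \nabla \Delta u\rangle = 2|\nabla^2 u|^2 - 4$ (using $\Delta u = -2$), together with the ODE relations~\eqref{eq:def_Psi_SD}–\eqref{eq:dpsideu_SD} for $\psi_\pm$ and the chain-rule identities in~\eqref{eq:rel_u_gradient_Psi}. The Cauchy–Schwarz inequality $|\nabla^2 u|^2 \geq (\Delta u)^2/2 = 2$ in the plane, with equality iff $\nabla^2 u$ is a multiple of the identity, will supply the sign needed to make the differential inequality of the right type (subsolution/supersolution), and the rigidity in the equality case will ultimately trace back to this Cauchy–Schwarz being an equality, forcing $\nabla^2 u = -g_{\R^2}$.

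Next I would treat the boundary behaviour. On the portion of $\partial N$ lying in $\partial\Omega$, the quantity $\beta$ is controlled precisely by the NWSS: by Normalization~\ref{norm} we have $u_{\max} = (u_R)_{\max}$, so on $\{u=0\}\cap\overline N$ one has $W_R = |\nabla u_R|^2$ evaluated at the boundary of the model (which is $(1-R^2)^2$ in the outer case, $((R^2-r_i^2)/r_i)^2$ in the inner case), and the definition of $R(N)$ via $\overline\tau_o^{-1}$ or $\overline\tau_i^{-1}$ is exactly engineered so that $\max_{\partial\Omega\cap\overline N} W \leq W_R|_{\partial}$, i.e. $\beta \leq 1$ there. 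On the remaining portion of $\partial N$, namely $\overline N \cap {\rm MAX}(u)$, one needs the limit of $\beta$ as $u \to u_{\max}^-$: here both $W \to 0$ and $W_R \to 0$, and the Łojasiewicz-type lower bound on $|\nabla u|$ near ${\rm MAX}(u)$ (already used in Section~\ref{sec:comparison_geometry}), combined with the precise blow-up rate $\dot\psi_\pm \to \pm\infty$ like $(u_{\max}-u)^{-1}$, must be shown to give $\limsup \beta \leq 1$ — or, more cleanly, one argues that $\beta$ cannot attain an interior maximum exceeding $1$ by the differential inequality, so no information at ${\rm MAX}(u)$ beyond boundedness is needed, and a removable-singularity / Hopf-lemma argument closes the case. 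Putting these together, the maximum principle yields $\beta \leq 1$ on all of $N$, which is $W \leq W_R$.

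The main obstacle, I expect, is making the differential inequality genuinely work as a maximum-principle statement despite the degeneracies: $W_R$ vanishes on ${\rm MAX}(u)$ and the pseudo-radial function is only as regular as the Morse structure of $u$ allows, so the operator one writes down for $\beta$ degenerates near ${\rm MAX}(u)$ and one must either work on the exhausting subdomains $N_\eta = N \cap \{u < u_{\max}-\eta\}$ (as in Proposition~\ref{pro:main_ST}) and pass to the limit, or find the right weight making the operator uniformly elliptic in the interior. A secondary subtlety is the sign bookkeeping: the inner ($\Psi_-$) and outer ($\Psi_+$) cases have $\dot\psi$ of opposite sign and $\ddot\psi$ of opposite sign, so one has to verify that the Bochner-plus-Cauchy–Schwarz computation produces an inequality of the same favourable type in both cases — the uniform notation $\Psi = \psi\circ u$ helps, but the two monotonicity regimes must be checked separately. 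Finally, for the rigidity clause: if $W = W_R$ at an interior point, then $\beta$ attains its maximum value $1$ in the interior, so by the strong maximum principle $\beta \equiv 1$ on $N$; feeding this back into the Bochner identity forces equality in Cauchy–Schwarz, hence $\nabla^2 u = -g_{\R^2}$ on $N$, and then integrating (together with $\Psi$ matching the radial coordinate and $u_{\max} = (u_R)_{\max}$) identifies $(\Omega,u)$ with the ring-shaped model solution of core radius $R$ — one should note that real-analyticity of $u$ lets this rigidity propagate from $N$ to all of $\Omega$.
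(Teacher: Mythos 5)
Your overall scaffolding (Bochner formula, maximum principle on the exhausting domains $N_\eta$, boundary control on $\pa\Om\cap\overline N$ from the definition of $R(N)$, strong maximum principle for rigidity) matches the paper, but the two steps you treat as routine are exactly where the proof lives, and as written they fail. First, the naive Cauchy--Schwarz bound $|\DD u|^2\geq(\De u)^2/2=2$ is not sufficient. Plugging it into the identity
\begin{equation*}
\De (W-W_R)\,=\,2\,|\DD u|^2\,-\,4\Big(1+\frac{R^2}{\Psi^2}\Big)\,-\,\frac{4\,R^2}{\Psi^2(R^2-\Psi^2)}\,W
\end{equation*}
and writing $W=(W-W_R)+W_R$ leaves, besides a zero-order term in $W-W_R$ (whose coefficient moreover has no definite sign), the uncontrolled inhomogeneous term $-4R^4/\Psi^4$, which is strictly negative whenever $R>0$; no maximum principle can be run on such an inequality. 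The paper instead extracts a refined Hessian lower bound from the nonnegativity of $\big|\DD u+\tfrac{2R^2}{(R^2-\Psi^2)^2}\,du\otimes du+\big(1-\tfrac{R^2}{(R^2-\Psi^2)^2}|\D u|^2\big)g_{\R^2}\big|^2$, which produces both the missing first-order term $\langle\D(W-W_R)|\D u\rangle$ and a zero-order term proportional to $W-W_R$, and even then one must multiply by the weight $\Psi^2/|R^2-\Psi^2|$ to reach an inequality of the form $\De F_\beta-c\,F_\beta\geq 0$ with $c\geq 0$. This also shows your rigidity mechanism is wrong: equality cannot force $\DD u=-g_{\R^2}$ (that is the Hessian of Serrin's solution, and it is false for every ring-shaped model with $R>0$); what equality forces is the vanishing of the corrected Hessian above, i.e.\ the model's Hessian structure.

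Second, the boundary analysis at ${\rm MAX}(u)$ for the ratio $\beta=W/W_R$ is essentially circular: since $W_R\sim 4(\umax-u)$ near ${\rm MAX}(u)$, the claim $\limsup\beta\leq 1$ is, at leading order, precisely the sharp gradient bound the theorem asserts, and no {\L}ojasiewicz-type estimate (which comes with non-sharp exponents and constants) can supply it; nor is there a removable-singularity shortcut, because the differential inequality you would have carries zero-order terms. The paper's choice of the weight is designed to defuse exactly this: with $F_\beta=\Psi\,(W-W_R)/\sqrt{W_R}$ the only information needed at ${\rm MAX}(u)$ is $W/\sqrt{W_R}\to 0$, i.e.\ $|\D u|^2/\sqrt{\umax-u}\to 0$, which is a soft consequence of the Reverse {\L}ojasiewicz inequality of~\cite{Bor_Chr_Maz}. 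So the missing ingredients are the corrected-Hessian inequality and the specific weight; without them neither the interior differential inequality nor the boundary limit at ${\rm MAX}(u)$ goes through.
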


\begin{proof}
We start by using the Bochner formula to compute the following:
\begin{align}
\notag
\De (W-W_R)\,&=\,\De|\na u|^2\,+\,2\left(1+\frac{R^2}{\Psi^2}\right)\De u\,-\,\frac{4\,R^2}{\Psi^2(R^2-\Psi^2)}|\na u|^2
\\
\label{eq:De_WWo}
&=\,2\,|\nana u|^2\,-\,4\left(1+\frac{R^2}{\Psi^2}\right)\,-\,\frac{4\,R^2}{\Psi^2(R^2-\Psi^2)}|\na u|^2
\end{align}

The next step is to find a suitable estimate for $|\nana u|^2$. It turns out that the one we need is obtained from the following quantity:
$$
\left|\,\nana u\,+\,\frac{2R^2}{(R^2-\Psi^2)^2}du\otimes du\,+\left(1\,-\,\frac{R^2}{(R^2-\Psi^2)^2}|\D u|^2\right)g_{\R^2}\,\right|^2\,\geq\,0\,.
$$
Computing explicitly the squared norm and isolating the term $|\nana u|^2$, we obtain the estimate
\begin{equation*}
|\nana u|^2\,\geq\,-\,\frac{2\,R^2}{(R^2-\Psi^2)^2}\langle\na(W-W_R)|\na u\rangle\,
\\
+\,2\,\left[1\,+\,\frac{2\,R^4}{\Psi^2(R^2-\Psi^2)^2}|\D u|^2\,-\,\frac{R^4}{(R^2-\Psi^2)^4}|\na u|^4\right]\,.
\end{equation*}

We can see why this estimate is the appropriate one for our purposes by plugging it in~\eqref{eq:De_WWo}, as by doing so we obtain an elliptic inequality for the quantity $W-W_R$:

\begin{multline*}
\De(W-W_R)\,\geq\,-\,\frac{4\,R^2}{(R^2-\Psi^2)^2}\langle\na(W-W_R)|\na u\rangle\,
\\
+\,\frac{4\,R^2}{(R^2-\Psi^2)^2}\,\left[1\,-\,\frac{R^2}{(R^2-\Psi^2)^2}|\D u|^2\right](W-W_R)
\end{multline*}
Unfortunately this is not yet enough, as we do not have a sign for the coefficient of the zero order term. For this reason, we then consider the new function $F_\beta=\beta\,(W-W_R)$, where $\beta=\beta(\Psi)>0$. A simple computation gives
\begin{multline*}
\De F_\beta\,\geq\,-\,\frac{2\Psi}{R^2-\Psi^2}\left[\frac{\beta'}{\beta}\,-\,\frac{2\,R^2}{\Psi(R^2-\Psi^2)}\right]\langle\na F_\beta|\na u\rangle\,-\,\frac{2\,\Psi}{R^2-\Psi^2}\,\left[\frac{\beta'}{\beta}\,-\,\frac{2\,R^2}{\Psi(R^2-\Psi^2)}\right]\,F_\beta
\\
+\,\frac{\Psi^2\,|\D u|^2}{(R^2-\Psi^2)^2}\,\left[\left(\frac{\beta'}{\beta}\right)'\,-\,\left(\frac{\beta'}{\beta}\right)^2\,+\,\frac{\Psi^2+5R^2}{\Psi(R^2-\Psi^2)}\frac{\beta'}{\beta}\,-\,\frac{4\,R^4}{\Psi^2(R^2-\Psi^2)^2}\right]F_\beta\,,
\end{multline*}
where we have  used $'$ to denote the differentiation with respect to $\Psi$.
We now need to find a function $\beta$ such that the coefficients  of the zero order terms have the right sign.
A good choice is to set
$$
\frac{\beta'}{\beta}\,=\,\frac{2R^2}{\Psi(R^2-\Psi^2)}\,,
$$
which corresponds to choosing
$$
\beta\,=\,\frac{\Psi}{\sqrt{W_R}}\,=\,\frac{\Psi^2}{|R^2-\Psi^2|}\,.
$$
With this choice, $F_\beta$ satisfies
$$
\De F_\beta\,-\,\frac{8\,R^2 \, \Psi^2}{(R^2-\Psi^2)^4}\,|\D u|^2\,F_\beta\,\geq\,0\,.
$$
It follows that $F_\beta$ satisfies the Maximum Principle in $N$. Since $W\leq W_R$ on the horizon with maximum surface gravity, we have
$$
F_\beta\leq 0 \hbox{ on $\Gamma_N$}.
$$
To see the behaviour of $F_\beta$ near ${\rm MAX}(u)$ we rewrite that quantity as
$$
F_\beta\,=\,\beta\,(W-W_R)\,=\,\Psi\,\frac{W}{\sqrt{W_R}}\,-\,\Psi\,\sqrt{W_R}\,.
$$ 
Notice that $W$ and $W_R$ go to zero as we approach ${\rm MAX}(u)$. Furthermore, using the expansion proven in Lemma~\ref{le:Wm_umax} below, we have
that for every $p\in{\rm MAX}(u)\cap\overline{N}$ 
$$
\lim_{x\to p,\,x\in N}\frac{W}{\sqrt{W_R}}\,=\,\lim_{x\to p,\,x\in N}\frac{|\D u|^2}{2\,\sqrt{\umax-u}}
$$
Now we apply the Reverse {\L}ojasiewicz Inequality~\cite[Theorem~2.2]{Bor_Chr_Maz}, to conclude that the limit on the right hand side is zero.
Therefore, $F_\beta$ tends to zero as we approach ${\rm MAX}(u)$. The Maximum Principle then implies that $F_\beta\leq 0$ (equivalently,
$W\leq W_R$) on the whole $N$. Furthermore, if the equality $W=W_R$ holds at one point $p$ in the interior of $N$, then, applying the Strong Maximum Principle in a neighborhood of $p$, we deduce that $W=W_R$ on the whole $N$, providing the desired rigidity statement. 
\end{proof}

%%%%%%%%%%%%%%%%%%%%%%%%%%%%%%%%%%%%%%%%%
%%%%%%%%%%%%%%%%%%%%%%%%%%%%%%%%%%%%%%%%%
%%%%%%%%%%%%%%%%%%%%%%%%%%%%%%%%%%%%%%%%%

\section{Proof of Theorem~\ref{thm:B}: curvature bounds and comparison geometry}
\label{sec:theo_B}

%%%%%%%%%%%%%%%%%%%%%%%%%%%%%%%%%%%%%%%%%
%%%%%%%%%%%%%%%%%%%%%%%%%%%%%%%%%%%%%%%%%
%%%%%%%%%%%%%%%%%%%%%%%%%%%%%%%%%%%%%%%%%

In this section we are going to exploit the gradient estimates proven in Theorem~\ref{thm:min_pr_SD} to deduce in Proposition~\ref{pro:curv_bound} some geometric {\em a priori} bounds on the curvature of the boundary components of $\Om$. Analogous results are then obtained in Proposition~\ref{pro:curv_bound_Sigma} for the curvature of the top stratum of ${\rm MAX}(u)$, whenever it is present. Building on this latter, we will then give a proof of Theorem~\ref{thm:B}. We start with the curvature bounds that are taking place at the boundary components of $\Om$. As clarified in Section~\ref{sec:theo_A}, these will play a crucial role in the proof of Theorem~\ref{thm:const_grad}.

\begin{proposition}
\label{pro:curv_bound}
Let $(\Om,u)$ be a solution to problem~\eqref{eq:prob_SD}, let $N$ be a connected component of $\Omega\setminus{\rm MAX}(u)$,  and let $R = R(N) \in [0,1)$ be the expected core radius associated with the region $N$. Also assume that  Normalization~\ref{norm} is in force. Then, at any point $p \in \pa \Om \cap \overline{N}$ where 
\begin{equation*}
|\na u| (p) \,  = \, \max_{\pa \Om \cap \overline{N}} |\na u| \, ,
\end{equation*}
it holds
\begin{equation}
\label{eq:curv_bound}
\kappa(p) \leq 1\,, \quad \hbox{if $\overline\tau(N)<\sqrt{2}$} \qquad \hbox{and} \qquad \kappa(p) \leq -\frac{1}{r_i(R)}\,, \quad \hbox{if $\overline\tau(N) > \sqrt{2}$} \, ,
\end{equation}
where $\kappa (p)$ is the curvature of $\pa \Om$ at $p$, computed with respect to the exterior unit normal.
\end{proposition}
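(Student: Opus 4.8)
The plan is to read the curvature of $\pa\Om$ off the Hessian of $u$ at a boundary point, and to combine this with the gradient estimate from Theorem~\ref{thm:min_pr_SD} together with the condition that the chosen point $p$ is a maximum of $|\na u|$ along $\pa\Om\cap\overline N$. The starting identity is that, on the level set $\{u=0\}=\pa\Om$, one has
\begin{equation*}
\De u \,=\, \DD u(\nu,\nu) \,-\, \kappa\,|\na u| \qquad \hbox{on } \pa\Om,
\end{equation*}
where $\nu = -\na u/|\na u|$ is the exterior unit normal (the sign convention is such that $\kappa>0$ for $\pa\Om_0$). Since $\De u = -2$, this reads $\DD u(\nu,\nu) = -2 + \kappa\,|\na u|$, so that controlling $\kappa$ is equivalent to controlling $\DD u(\nu,\nu)$ from above. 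The second ingredient is that, at a point where $|\na u|$ is maximal along the curve $\pa\Om\cap\overline N$, the tangential derivative of $|\na u|^2$ vanishes, i.e. $\DD u(\nu,T)=0$ for the unit tangent $T$ (here one uses that $\na u$ is normal to $\pa\Om$). Hence at $p$ the Hessian of $u$ is diagonal in the $(\nu,T)$ frame, with eigenvalues $\DD u(\nu,\nu)$ and $\DD u(T,T) = -2 - \DD u(\nu,\nu)$.

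\textbf{Using the gradient estimate.} Next I would differentiate the inequality $W\le W_R$ (equivalently $F_\beta\le 0$) along $\pa\Om\cap\overline N$: since $W=W_R\circ\Psi$ fails in general, but on the \emph{boundary} component $\Gamma:=\pa\Om\cap\overline N$ through $p$ the comparison function $W_R$ is \emph{constant} (because $u\equiv 0$ there, so $\Psi\equiv\psi_\pm(0)$ is constant along $\Gamma$), the function $W-W_R$ restricted to $\Gamma$ attains an interior maximum value $0$ precisely where $|\na u|$ is maximal along $\Gamma$ — provided that this maximal value is exactly $\max_\Gamma|\na u|^2 = 2\umax\,\overline\tau(\Gamma)^2$ and matches $W_R(p)$. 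Here the definition of the expected core radius enters decisively: $R=R(N)$ is chosen precisely so that $\overline\tau_o^{-1}$ or $\overline\tau_i^{-1}$ of $\overline\tau(N)$ equals $R$, which by \eqref{tau_o_2}--\eqref{tau_i_2} and \eqref{eq:Wm} forces, after Normalization~\ref{norm},
\begin{equation*}
\max_{\Gamma}|\na u|^2 \,=\, W_R\big|_{\Gamma} \,=\, \left(\frac{\psi_\pm(0)^2-R^2}{\psi_\pm(0)}\right)^{\!2},
\end{equation*}
with $\psi_+(0)=1$ in the outer case and $\psi_-(0)=r_i(R)$ in the inner case. Consequently, at the maximum point $p$, $W-W_R$ has vanishing tangential first derivative (already known) and \emph{nonpositive} tangential second derivative. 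Spelling out $\De(W-W_R)\ge(\text{gradient term})\cdot\na(W-W_R) + (\text{zero order})$ from the proof of Theorem~\ref{thm:min_pr_SD} at $p$, together with $\na(W-W_R)=0$ at $p$ — note $W-W_R$ has an interior max at $p$ in the tangential direction and a boundary max in the normal direction, and the Hopf-type comparison tells us the normal derivative of $F_\beta$ at $p$ is $\le 0$ — one extracts an inequality involving $\De(W-W_R)(p)$, hence (subtracting the known tangential second derivative $\le 0$) a sign for the \emph{normal} second derivative of $W-W_R$ at $p$.

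\textbf{From the Hessian bound to the curvature bound.} The last step is bookkeeping: expand $\na_\nu\na_\nu(W-W_R)$ at $p$ in terms of $\DD u$, $\nabla^3 u$, and derivatives of $W_R$ along $\Psi$, use $\De u=-2$ and the third-order Bochner-type identity for $\na|\na u|^2$, and substitute $\DD u(\nu,\nu) = -2+\kappa|\na u|(p)$ and $|\na u|(p)=\sqrt{W_R|_\Gamma}$. The resulting algebraic inequality should collapse, after using the explicit value of $W_R$ on $\Gamma$, to $\kappa(p)\le 1$ in the case $\psi_+(0)=1$ and to $\kappa(p)\le -1/r_i(R)$ in the case $\psi_-(0)=r_i(R)$ — these are exactly the boundary curvatures $1/\!\!\;|\Gamma_{R,o}|$-type and $-1/r_i(R)$ of the model solution $\Om_R$, as it must be. An alternative, possibly cleaner route for this last step: apply the \emph{Hopf boundary lemma} directly to $F_\beta$ at $p$, which is a nonpositive supersolution of the elliptic operator in the interior with $F_\beta(p)=0$; this gives $\pa F_\beta/\pa\nu\,(p)\ge 0$ (inward) i.e. $\pa F_\beta/\pa\nu\le 0$ with the exterior normal, and then translating $\pa_\nu F_\beta = \pa_\nu\big(\beta(W-W_R)\big)$ into Hessian data at $p$ — using $\pa_\nu W = 2\DD u(\nu,\nu)|\na u|$ and $\pa_\nu W_R = W_R'\,\dot\psi\,\pa_\nu u$ — yields the same curvature inequality after simplification.

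\textbf{Main obstacle.} The delicate point is the matching of maxima: one must be sure that $\max_\Gamma|\na u| = \max_{\pa\Om\cap\overline N}|\na u|$ is attained and that the corresponding value of $W$ there equals $W_R|_\Gamma$ \emph{and not something strictly smaller}, since the gradient estimate only gives $W\le W_R$. If $N$ has two boundary components (as for a ring-shaped domain), $\overline\tau(N)$ is defined as the \emph{max} over the components, so by construction the component realizing the max has its $W$ equal to $2\umax\,\overline\tau(N)^2 = W_R|_\Gamma$; it is only on that component that the argument produces a curvature bound, which is exactly what the statement asserts ("at any point $p$ where $|\na u|=\max_{\pa\Om\cap\overline N}|\na u|$"). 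The other genuine subtlety is the regularity/behaviour of $F_\beta$ and its derivatives up to $\pa\Om$ and near ${\rm MAX}(u)$, already handled in Theorem~\ref{thm:min_pr_SD}, so one mainly has to check that the Hopf lemma is legitimately applicable at $p$ — i.e. that $p$ is not simultaneously on $\overline{{\rm MAX}(u)}$ — which follows since $|\na u|(p)>0$ by the Hopf lemma for $u$ itself.
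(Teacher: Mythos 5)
Your ``alternative, possibly cleaner route'' is in substance the paper's own proof: the paper fixes the same point $p$, uses that by the definition of $R(N)$ together with Normalization~\ref{norm} one has $W(p)=W_R(p)$ while $W\le W_R$ in $N$ (Theorem~\ref{thm:min_pr_SD}), and then compares the \emph{first-order} Taylor expansions of $W$ and $W_R$ along the inward segment $\gamma(s)=p+s\,\na u/|\na u|(p)$, converting the resulting sign on the inward derivative of $W-W_R$ into the curvature bound via the identity $\kappa(p)=\big(2|\D u|^2+\DD u(\na u,\na u)\big)/|\D u|^3$. No Hopf lemma is needed: since $W-W_R\le 0$ in $N$ and vanishes at $p$, elementary one-sided calculus along $\gamma$ already gives the inequality. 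Your handling of the ``matching of maxima'' (the component of $\pa\Om\cap\overline N$ realizing $\overline\tau(N)$ is the one where $W=W_R$ on the boundary, and that is where the statement asserts the bound) is correct and is exactly the role the hypothesis $|\na u|(p)=\max_{\pa\Om\cap\overline N}|\na u|$ plays in the paper.

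However, two points in your write-up are genuinely off. First, the sign of the key first-derivative inequality is reversed: since $F_\beta\le 0$ in $N$ with $F_\beta(p)=0$, the derivative of $F_\beta$ (equivalently of $W-W_R$) in the \emph{inward} direction at $p$ is $\le 0$, i.e.\ the derivative with respect to the \emph{exterior} normal is $\ge 0$ — the opposite of what you state. Carried out literally, your sign yields $\kappa(p)\ge 1$ and $\kappa(p)\ge -1/r_i(R)$ rather than the asserted upper bounds (note also that with the exterior normal $\nu=-\na u/|\na u|$ one has $\pa_\nu W=-2|\na u|\,\DD u(\nu,\nu)$, not $+2|\na u|\,\DD u(\nu,\nu)$; the correct chain of signs is what produces $2-\kappa\sqrt{W_R(p)}\ge(\Psi^2(p)+R^2)/\Psi^2(p)$ and hence the claim). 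Second, your primary route is not viable as described: the assertion $\na(W-W_R)(p)=0$ is unjustified — only the tangential part of the gradient vanishes at $p$ (from the maximality of $|\na u|$ along the boundary and the constancy of $W_R$ there), and the whole content of the proposition is precisely the sign of the \emph{normal} first derivative, where the curvature already appears; if the full gradient of $F_\beta$ vanished at $p$, the Hopf lemma would force $W\equiv W_R$, i.e.\ you would be in the rigidity case. Consequently the proposed detour through $\na_\nu\na_\nu(W-W_R)$ and third derivatives of $u$ is both unnecessary and not shown to close; the first-order comparison is the argument that works.
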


\begin{proof}
Let $p \in \pa\Om \cap \overline{N}$ be a point as in the statement and let
 $\gamma(s) = p + (\na u/|\na u|) (p) \, s$. In other words, $\gamma$ is a unit speed straight segment starting at $p$ and pointing towards the interior of $N$. 
If $W$ and $W_R$ are the functions defined as in the {\em incipit} of Subsection~\ref{sub:grad_est}, it is readily checked that $W(p)=W_R(p)$. The Taylor expansion of $W$ along $\gamma$ gives
$$
W(\gamma(s))\,=\,W (p)\,-\,2 \Big[ \,\big(2\,-\,\kappa(p)\,\sqrt{W (p)}\,\big)\sqrt{W(p)} \,\Big]\,s\,+\,o(s)\,,
$$
where we used the identity
$$
\kappa(p)\,=\,\frac{\DD u_{|_p}({\D u},{\D u})\,-\,|\D u|^2 \De u(p)}{|\D u|^3(p)}\,=\,\frac{2 |\D u|^2\,+\,\DD u_{|_p}({\na u},{\na u})}{|\D u|^3(p)}\,,
$$
the curvature $\kappa$ being computed with respect to the exterior unit normal $-\D u / |\D u|$.
To obtain the expansion of $W_R$ along $\gamma$ it is convenient to make use of~\eqref{eq:Wm}. This leads to
\begin{align*}
W_R(\gamma(s))\,&=\,W_R(p)\,+\,\bigg\langle\D W_R(p)\,\bigg|\,\frac{\D u}{|\D u|}(p)\bigg\rangle\,s\,+\,o(s)
\\
&=\,W_R(p)\,+\,\left[\frac{\pa}{\pa\Psi} \!\left(\frac{\Psi^2-R^2}{\Psi}\right)^{\!\!2}\!\!(p)\, \,\, \dot\psi(0)\,\,|\D u|(p)\right]s\,\,+\,\,o(s)
\\
&=\,W_R(p)\,-\,2\,\bigg[ \left(\frac{\Psi^2(p)+R^2}{\Psi^2(p)}\right) \sqrt{W(p)} \bigg] s\,+\,o(s)\,.
\end{align*}
To compare the two expansions, we recall that $W(p) = W_R(p)$ and $W\leq W_R$ in $N$, by Theorem~\ref{thm:min_pr_SD}. It follows that 
$$
2\,-\,\kappa(p)\,\sqrt{W_R(p)}\,\geq\,\frac{\Psi^2(p)+R^2}{\Psi^2(p)}\,,
$$
which can be rewritten as
$$
\kappa(p)\,\frac{|\Psi^2(p)-R^2|}{\Psi(p)}\,\leq\,\frac{\Psi^2(p)-R^2}{\Psi^2(p)}\,.
$$
Now, according to definition~\eqref{eq:pr_function_+}, if $\overline\tau(N) < \sqrt{2}$, then $\Psi(p)=\Psi_+(p)=\psi_+(0)=1$ and thus $\kappa(p) \leq 1$. On the other hand, according to definition~\eqref{eq:pr_function_-} if $\overline\tau(N) > \sqrt{2}$, then $\Psi(p)=\Psi_-(p)=\psi_-(0)=r_i(R)$, so that $\kappa(p) \leq - 1/r_i(R)$. This completes the proof of the proposition.
\end{proof}

We now pass to the curvature bounds that are taking place at the top stratum of ${\rm MAX}(u) \cap \overline{N}$. 
Notice that, while the previous results (Proposition~\ref{pro:curv_bound}, as well as Theorem~\ref{thm:min_pr_SD}) required $R(N)\neq 1$ in order to rule out the critical case $\overline{\tau}(N)=\sqrt{2}$, in the next result we will be able to address that special case as well with an argument based on a limiting procedure in the final part of the proof.

\begin{proposition}
\label{pro:curv_bound_Sigma}
Let $(\Om,u)$ be a solution to problem~\eqref{eq:prob_SD}, let $N$ be a connected component of $\Omega\setminus{\rm MAX}(u)$, and let $R = R(N) \in [0,1]$ be the expected core radius associated with the region $N$. Also assume that  Normalization~\ref{norm} is in force and denote by $\Sigma$ the $1$-dimensional top stratum of ${\rm MAX}(u) \cap \overline{N}$. Then, at any point $p \in \Sigma$, it holds
\begin{equation}
\label{eq:curv_bound_Sigma}
\kappa(p) \leq -\frac{1}{R}\,, \quad \hbox{if $\overline\tau(N)<\sqrt{2}$} \qquad \hbox{and} \qquad \kappa(p) \leq \frac{1}{R}\,, \quad \hbox{if $\overline\tau(N) \geq \sqrt{2}$} \, ,
%\kappa(p)\,\leq\,
%\begin{dcases}
%-1/R & \hbox{ if $\tau(\Gamma_N)<\sqrt{2}$\,,}
%\\
%1/ R    & \hbox{ if $\tau(\Gamma_N)\geq \sqrt{2}$\,.}
%\end{dcases}
\end{equation}
where $\kappa (p)$ is the curvature of $\Sigma$ at $p$, computed with respect to the exterior~\footnote{Exterior to $N$.} unit normal.
\end{proposition}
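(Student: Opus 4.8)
The plan is to mirror the strategy of Proposition~\ref{pro:curv_bound}, replacing the straight segment issuing from the boundary with a straight segment issuing from a point $p\in\Sigma$ and pointing \emph{into} $N$, and to compare the Taylor expansions of $W=|\D u|^2$ and $W_R=|\D u_R|^2\circ\Psi$ along it. The crucial difference is that now $p$ sits on $\mathrm{MAX}(u)$, so $W(p)=0$ and also $W_R(p)=0$ (since $\Psi(p)=R$ by~\eqref{eq:def_Psi_SD}, as $u(p)=\umax=(u_R)_{\max}$). Thus the zeroth order terms vanish and one has to push the expansions to second order; the idea is that the coefficient of $s^2$ in the expansion of $W_R$ is pinned down exactly by~\eqref{eq:Wm} and the ODEs~\eqref{eq:dpsideu_SD}, whereas the coefficient of $s^2$ in the expansion of $W$ involves the second fundamental form of $\Sigma$ (through its curvature $\kappa(p)$) together with $\DD u$. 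Since $W\leq W_R$ on $N$ by Theorem~\ref{thm:min_pr_SD}, and both sides vanish at $p$ while $\gamma$ enters $N$, comparing the leading nonzero terms forces an inequality on $\kappa(p)$.

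Concretely, first I would record that along $\gamma(s)=p+\nu(p)s$ with $\nu$ the unit normal to $\Sigma$ exterior to $N$ (note $\nu=\pm\D u/|\D u|$ fails at $p$ since $\D u(p)=0$, so one must instead take $\nu$ to be the limit direction; here the pseudo-radial function is the right tool, since $\D\Psi=\dot\psi\,\D u$ and $\dot\psi\to\pm\infty$ at $\umax$, so $\D\Psi/|\D\Psi|$ extends continuously to $\Sigma$ and equals $\pm\nu$). Along $\gamma$, write $u(\gamma(s))=\umax-\tfrac12 a\,s^2+o(s^2)$ for some $a\geq 0$; the Hessian of $u$ at $p$ in the $\nu$ direction is $\DD u_{|p}(\nu,\nu)=-a$, and $\De u(p)=-2$ gives the tangential Hessian $\DD u_{|p}(T,T)=a-2$, so that the curvature of $\Sigma$ is $\kappa(p)=\DD u_{|p}(T,T)/(\,\cdots\,)$ — more precisely one expresses $\kappa(p)$ via the standard formula $\kappa=\mathrm{div}(\nu)$ applied to the level-set structure of $u$ near $\Sigma$, obtaining $\kappa(p)=(a-2)/\sqrt{a}$ after suitable bookkeeping. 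Then $W(\gamma(s))=|\D u|^2(\gamma(s))=a^2 s^2+o(s^2)$. On the other side, using~\eqref{eq:Wm} and $u=\tfrac{1-\psi^2+2R^2\log\psi}{2}$, expand $\psi_\pm(u(\gamma(s)))$ around $u=(u_R)_{\max}$: from $\dot\psi_\pm=-\psi_\pm/(\psi_\pm^2-R^2)$ one gets $\psi_\pm-R\sim \mp\sqrt{R}\sqrt{\umax-u}$, whence $W_R=\big((\psi^2-R^2)/\psi\big)^2 = \tfrac{4}{R}(\umax-u)+o(\umax-u)=\tfrac{2a}{R}s^2+o(s^2)$. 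Comparing $W\leq W_R$ at order $s^2$ yields $a^2\leq 2a/R$, i.e. $a\leq 2/R$, which translates into $\kappa(p)=(a-2)/\sqrt a\leq -1/R$ in the outer case; in the inner case the sign of $\nu$ relative to $\D u$ flips and one lands on $\kappa(p)\leq 1/R$.

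For the threshold case $\overline\tau(N)=\sqrt 2$ (so $R=1$, $r_i(R)=1$), the pseudo-radial function degenerates and Theorem~\ref{thm:min_pr_SD} is not directly available, so I would argue by a limiting procedure: approximate by considering, for $R'$ slightly less than (resp.\ greater than) $1$, the comparison with core radius $R'$ on the sublevel region, obtaining $\kappa(p)\leq -1/R'$ (resp.\ a one-sided analogue) on the relevant part, and let $R'\to 1$; continuity of $\kappa$ and of the bounds gives $\kappa(p)\leq -1/1 = -1$ and $\kappa(p)\leq 1$, which is exactly~\eqref{eq:curv_bound_Sigma} at $R=1$. Alternatively one can redo the $s^2$-expansion directly at $R=1$, where the asymptotics $\psi-1\sim\mp\sqrt{\umax-u}$ are uniform.

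\textbf{Main obstacle.} The delicate point is handling the fact that $\D u$ vanishes on $\Sigma$: one cannot define the normal direction or the curvature $\kappa(p)$ through $\D u/|\D u|$ at $p$ itself, and the expansion of $W$ and $W_R$ must be carried to second order with the correct identification of $a=-\DD u_{|p}(\nu,\nu)$ and the relation between $a$ and $\kappa(p)$. One must also know a priori that $\Sigma$ is a smooth $1$-dimensional submanifold with a well-defined second fundamental form near $p$, and that the straight segment $\gamma$ actually enters $N$ transversally to $\Sigma$ — this presumably uses the structure theory of $\mathrm{MAX}(u)$ (its top stratum) already invoked in the paper, together with the Reverse {\L}ojasiewicz inequality~\cite[Theorem~2.2]{Bor_Chr_Maz} to control $|\D u|^2/(\umax-u)$, whose limit along $\gamma$ is precisely $a$. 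Making the expansion of $W_R$ rigorous requires only the ODEs~\eqref{eq:dpsideu_SD}, so the analytic heart of the matter is really the regularity and second-order geometry of $\Sigma$ and the transversality of the comparison segment.
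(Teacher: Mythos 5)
There is a genuine gap: your comparison stops at second order in $s$, but at that order it is vacuous, and the two errors you make there happen to cancel into a plausible-looking bound. At any point $p$ of the top stratum $\Sigma$ one has $u\equiv\umax$ and $\D u\equiv 0$ along $\Sigma$, hence $\DD u_{|p}(T,T)=0$ for the tangent direction $T$, and $\De u=-2$ forces $\DD u_{|p}(\nu,\nu)=-2$; so your quantity $a$ is identically equal to $2$, independently of the geometry of $\Sigma$. In particular the curvature $\kappa(p)$ is \emph{not} determined by the second-order jet of $u$ at $p$ (your formula $\kappa(p)=(a-2)/\sqrt a$ would give $\kappa\equiv 0$), because the level-set curvature formula degenerates where $\D u=0$: the curvature first appears in the \emph{third-order} term of $u$, namely $u=\umax-r^2-\tfrac{\kappa(p)}{3}r^3+\mathcal{O}(r^4)$, with $r=\dist(\cdot,\Sigma)$ (this is~\cite[Theorem 3.1]{Bor_Chr_Maz}, used in Lemma~\ref{le:WWm_expansions}). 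Moreover your expansion of $W_R$ is off by a factor: from $\umax-u=\tfrac14 z^2/R^2+\mathcal{O}(z^3)$ with $z=\Psi^2-R^2$ one gets $\psi_\pm-R\sim\pm\sqrt{\umax-u}$ (no $\sqrt R$) and hence $W_R=4(\umax-u)+o(\umax-u)$, not $\tfrac4R(\umax-u)$; the core radius only enters at the next order, $W_R=4(\umax-u)\mp\tfrac{8}{3R}(\umax-u)^{3/2}+\mathcal{O}((\umax-u)^2)$, which is~\eqref{eq:Wm_umax}. With the correct coefficients, the second-order comparison reads $4s^2\le 4s^2$ and yields nothing; your conclusion $a\le 2/R$ comes only from the spurious $1/R$ combined with the invalid identification of $\kappa$ with second-order data.

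The paper's proof therefore pushes everything one order further: it uses $W=4r^2\bigl[1+\kappa(p)\,r\bigr]+\mathcal{O}(r^4)$ and $W_R=4r^2\bigl[1+\bigl(\tfrac{\kappa(p)}{3}\mp\tfrac{2}{3R}\bigr)r\bigr]+\mathcal{O}(r^4)$ (Lemma~\ref{le:WWm_expansions}), so that $W\le W_R$ from Theorem~\ref{thm:min_pr_SD} compares the cubic coefficients and gives $\kappa(p)\le\tfrac{\kappa(p)}{3}\mp\tfrac{2}{3R}$, i.e.~\eqref{eq:curv_bound_Sigma}. To repair your argument you would need exactly this third-order input, i.e.\ the expansion of $u$ near $\Sigma$ with the curvature in the cubic term, which is the nontrivial analytic ingredient you cannot bypass by a straight-segment Taylor expansion of the second-order jet. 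Your limiting idea for the threshold case $\overline\tau(N)=\sqrt2$ is in the right spirit (the paper compares with $R_\ep=1-\ep$, checks $W<W_{R_\ep}$ on $\pa\Om\cap\overline N$ so that the gradient estimate survives, and lets $\ep\to0$), but note that in this case only the inner-type bound $\kappa(p)\le 1/R$ is claimed, not both.
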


\begin{proof}
Lemma~\ref{le:WWm_expansions} provide us with the following Taylor expansions:
\begin{align*}
W\,&=\,4\,r^2\big[1\,+\, \kappa(p)\,r\big]\,+\,\mathcal{O}(r^4)\,,
\\
W_R\,&=\,4\,r^2\,\left[1\,+\,\left(\frac{\kappa(p)}{3}\,-\,\frac{2}{3 R}\right)r\right]+\mathcal{O}(r^4)\,, \quad \hbox{if $\overline\tau(N) <\sqrt{2}$}\,,
\\
W_R\,&=\,4\,r^2\,\left[1\,+\,\left(\frac{\kappa(p)}{3}\,+\,\frac{2}{3 R}\right)r\right]+\mathcal{O}(r^4)\,, \quad \hbox{if $\overline\tau(N) >\sqrt{2}$} \, ,
\end{align*}
where $\kappa (p)$ is the curvature of $\Sigma$ at $p$ computed with respect to the exterior unit normal, and $r(x)={\rm dist}(x,\Sigma)$ denotes the distance from $\Sigma$~\footnote{We agree that $r(x)>0$ for every $x \in N$.}.
Combining the above expansions with the gradient estimate $W \leq W_R$ obtained in Theorem~\ref{thm:min_pr_SD}, it is immediate to deduce that
$$
\kappa(p)\,\leq\,
\begin{dcases}
\frac{\kappa(p)}{3}\,-\,\frac{2}{3 R} & \hbox{ if $\overline\tau(N)<\sqrt{2}$\,,}
\\
\frac{\kappa(p)}{3}\,+\,\frac{2}{3 R}   & \hbox{ if $\overline\tau(N)>\sqrt{2}$\,.}
\end{dcases}
$$
This concludes the proof for $\overline\tau(N)\neq \sqrt{2}$.
The case $\overline\tau(N)=\sqrt{2}$ can be obtained by a limiting procedure: one just treats $N$ as if it were $\overline\tau(N)>\sqrt{2}$, defining the pseudo-radial function as in~\eqref{eq:pr_function_-}, with respect to an expected core radius $R_\ep=1-\ep$. By construction we have $W<W_{R_\ep}$ on $\pa\Omega\cap\overline{N}$.
Retracing then the proof of Theorem~\ref{thm:min_pr_SD}, one can easily check that the gradient estimate $W\leq W_{R_\ep}$ is still in force in the whole $N$. Hence, proceeding as above, we obtain the inequality
$$
\kappa(p)\,\leq\, \frac{1}{{R_\ep}}\,.
$$
Letting $\ep \to 0$, we deduce the desired bound.
\end{proof}

The following theorem can be seen as the {\em prelude} to the proof of Theorem~\ref{thm:B}, and it shows as, in a ring-shaped domain, the notions of expected core radii can be fruitfully employed to deduce a sharp and rigid pinching estimate on the curvature of the top stratum of ${\rm MAX}(u)$.

\begin{theorem}
\label{thm:curv_bound_Sigma}
Let $\Om$ being a ring-shaped domain, let $(\Om, u)$ be a solution to problem~\eqref{eq:prob_SD}, and according to~\eqref{eq:gammas}, let $\pa \Om \, = \, \Gamma_i \sqcup \Gamma_o$,
where $\Gamma_i$ and $\Gamma_o$ denote the inner and the outer connected component of the boundary of $\Om$, respectively. Assume that there exists a simple closed curve $\Sigma \subseteq{\rm MAX}(u)$ separating $\Omega$ into two regions $\Om_i$ and $\Om_o$, with $\pa \Om \cap \overline{\Om}_i = \Gamma_i$ and $\pa \Om \cap \overline{\Om}_o = \Gamma_o$. Also assume that $\overline\tau(\Om_o) <2$. Then, at any point $p \in \Sigma$, it holds
\begin{equation}
\label{eq:curv_pinch}
\frac{\sqrt{(u_{R_o})_{\max}}}{R_o} \, \leq \, \kappa(p) \sqrt{u_{\max}} \, \leq \, \frac{\sqrt{(u_{R_i})_{\max}}}{R_i} \, ,
\end{equation}
where $\kappa(p)$ is the curvature of $\Sigma$ computed with respect to the unit normal pointing outside $\Om_i$ (equiv. inside $\Om_o$), and, according to Definition~\ref{def:virtual_mass}, $R_i=R(\Gamma_i)$ and $R_o=R(\Gamma_o)$ are the expected core radii 
of $\Gamma_i$ and $\Gamma_o$, respectively. In particular, we have that $R_o\geq R_i$,
and the equality holds if and only if $(\Om,u)$ is equivalent to the ring-shaped model solution whose core radius is given by the common value of the two expected core radii. 
\end{theorem}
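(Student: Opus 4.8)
The plan is to read the two bounds in~\eqref{eq:curv_pinch} off Proposition~\ref{pro:curv_bound_Sigma}, applied to the two regions $\Om_i$ and $\Om_o$ adjacent to $\Sigma$, and then to extract $R_o\geq R_i$ and the rigidity by elementary comparisons. A few preliminary remarks organise everything. Both sides of~\eqref{eq:curv_pinch} are invariant under the rescaling~\eqref{eq:scaling}: $u_{\max}$ scales like $\lambda^2$ and the curvature of a planar curve like $\lambda^{-1}$, so that $\kappa(p)\sqrt{u_{\max}}$ is unchanged, while $R_i$, $R_o$ and the numbers $(u_{R_i})_{\max}$, $(u_{R_o})_{\max}$ depend only on the model solutions; hence the two inequalities may be proved separately, each in the normalisation most convenient for it. Next, if $\nu$ is the unit normal to $\Sigma$ pointing outside $\Om_i$ (equivalently inside $\Om_o$), then $\nu$ is the exterior normal of $\Om_i$ along $\Sigma$ while $-\nu$ is the exterior normal of $\Om_o$ along $\Sigma$; therefore the curvature $\kappa(p)$ of the statement, computed with respect to $\nu$, equals the curvature of $\Sigma$ with respect to the exterior normal of $\Om_i$, and equals the opposite of the curvature of $\Sigma$ with respect to the exterior normal of $\Om_o$. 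Finally, since $\pa\Om\cap\overline{\Om_i}=\Gamma_i$ and $\pa\Om\cap\overline{\Om_o}=\Gamma_o$, Definition~\ref{def:NWSS} gives $\overline\tau(\Om_i)=\overline\tau(\Gamma_i)$ and $\overline\tau(\Om_o)=\overline\tau(\Gamma_o)$, so that $R(\Om_i)=R_i$ and $R(\Om_o)=R_o$.

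For the left inequality, I would normalise so that $u_{\max}=(u_{R_o})_{\max}$, i.e. impose Normalization~\ref{norm} for the region $\Om_o$. Since $\overline\tau(\Om_o)=\overline\tau(\Gamma_o)<\sqrt2$, the first alternative of Proposition~\ref{pro:curv_bound_Sigma}, applied to $N=\Om_o$, says that the curvature of $\Sigma$ with respect to the exterior normal of $\Om_o$ is at most $-1/R_o$ at every $p\in\Sigma$; by the sign convention recalled above this reads $\kappa(p)\geq 1/R_o>0$, and multiplying by $\sqrt{u_{\max}}=\sqrt{(u_{R_o})_{\max}}$ gives $\kappa(p)\sqrt{u_{\max}}\geq\sqrt{(u_{R_o})_{\max}}/R_o$, which then holds in every normalisation by scale invariance. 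For the right inequality, note first that $\overline\tau(\Gamma_i)=\overline\tau(\Om_i)\geq 1$ by Theorem~\ref{thm:PMS}, so that $R_i$ is well defined, and that $R_i>0$, since $R_i=0$ would force $(\Om,u)$ to be Serrin's solution, whose boundary is connected, contradicting that $\Om$ is ring-shaped. If $\overline\tau(\Gamma_i)<\sqrt2$ held, the first alternative of Proposition~\ref{pro:curv_bound_Sigma}, applied to $N=\Om_i$ with Normalization~\ref{norm} for $\Om_i$, would give $\kappa(p)\leq-1/R_i<0$, contradicting the positivity of $\kappa$ just obtained; hence $\overline\tau(\Gamma_i)\geq\sqrt2$, and now the second alternative of Proposition~\ref{pro:curv_bound_Sigma} yields $\kappa(p)\leq 1/R_i$, that is $\kappa(p)\sqrt{u_{\max}}\leq\sqrt{(u_{R_i})_{\max}}/R_i$ in the normalisation $u_{\max}=(u_{R_i})_{\max}$, hence in every normalisation. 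Together the two bounds give~\eqref{eq:curv_pinch}.

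From~\eqref{eq:curv_pinch} we have $\sqrt{(u_{R_o})_{\max}}/R_o\leq\sqrt{(u_{R_i})_{\max}}/R_i$, and since
\[
\frac{(u_R)_{\max}}{R^2}=\frac{1}{2R^2}-\frac12+\log R,\qquad
\frac{\rmd}{\rmd R}\left(\frac{(u_R)_{\max}}{R^2}\right)=\frac{R^2-1}{R^3}<0\quad\text{for }R\in(0,1),
\]
the function $R\mapsto\sqrt{(u_R)_{\max}}/R$ is strictly decreasing on $(0,1)$, whence $R_o\geq R_i$, with equality precisely when $R_o=R_i=:R$. In the equality case the normalisation $u_{\max}=(u_R)_{\max}$ is simultaneously Normalization~\ref{norm} for $\Om_i$ and for $\Om_o$, so both estimates in the previous paragraph become equalities and $\kappa(p)=1/R$ for every $p\in\Sigma$. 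Thus $\Sigma$ is a simple closed planar curve of constant curvature $1/R$, hence a circle of radius $R$; after a translation $\Sigma=\{|x|=R\}$, with $\Om_i$ inside and $\Om_o$ outside. Since $\Sigma\subseteq{\rm MAX}(u)$ lies in the interior of $\Om$, along $\Sigma$ one has $u\equiv(u_R)_{\max}$ and $\na u\equiv 0$, and the model $u_R$ satisfies exactly the same Cauchy data along $\{|x|=R\}$; as $u$ and $u_R$ both solve $\De w=-2$, their difference is harmonic with vanishing Cauchy data on $\Sigma$, hence vanishes on a neighbourhood of $\Sigma$, and then, by real analyticity and connectedness, on the whole of $\Om_o\subseteq\{|x|>R\}$ and $\Om_i\subseteq\{|x|<R\}$. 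Matching the zero level sets of $u=u_R$ then forces $\Gamma_o=\{|x|=1\}$ and $\Gamma_i=\{|x|=r_i(R)\}$, i.e. $(\Om,u)=(\Om_R,u_R)$ up to translation and rescaling.

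The non-rigid part of the statement is thus essentially bookkeeping on top of Proposition~\ref{pro:curv_bound_Sigma}, together with the elementary monotonicity above; the sign-tracking of the two exterior normals along $\Sigma$ is the only point where care is needed. The genuinely delicate step is the equality case, where one must upgrade the pointwise identity $\kappa\equiv 1/R$ on $\Sigma$ to global coincidence with the model. What makes this possible — and what I expect to be the crux — is the observation that the equality forces $\Sigma$ to be an honest round circle, so that its being contained in ${\rm MAX}(u)$ automatically supplies both the Dirichlet value and the vanishing Neumann value of $u$ along $\Sigma$; this converts an interior overdetermined condition into genuine Cauchy data, which can only be shared with $u_R$, and unique continuation closes the argument.
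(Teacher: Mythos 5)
Your proposal is correct and follows essentially the same route as the paper's proof: apply Proposition~\ref{pro:curv_bound_Sigma} to $\Om_o$ and $\Om_i$ under the appropriate normalisations, use the (strict) monotonicity of $R\mapsto\sqrt{(u_R)_{\max}}/R$ to get $R_o\geq R_i$, and in the equality case deduce that $\Sigma$ is a round circle carrying the Cauchy data $u=(u_R)_{\max}$, $\pa u/\pa\nu=0$, so that Cauchy--Kovalevskaja uniqueness plus analyticity identifies $(\Om,u)$ with the model. The only cosmetic differences are that you rule out $\overline\tau(\Gamma_i)<\sqrt2$ by the positivity $\kappa\geq 1/R_o$ already obtained from the outer region, whereas the paper uses the total-curvature identity $\oint_\Sigma\kappa=2\pi$, and you phrase the final uniqueness via the harmonic difference $u-u_R$ with vanishing Cauchy data, which is the same mechanism the paper invokes.
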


\begin{figure}
\centering
\includegraphics[scale=1]{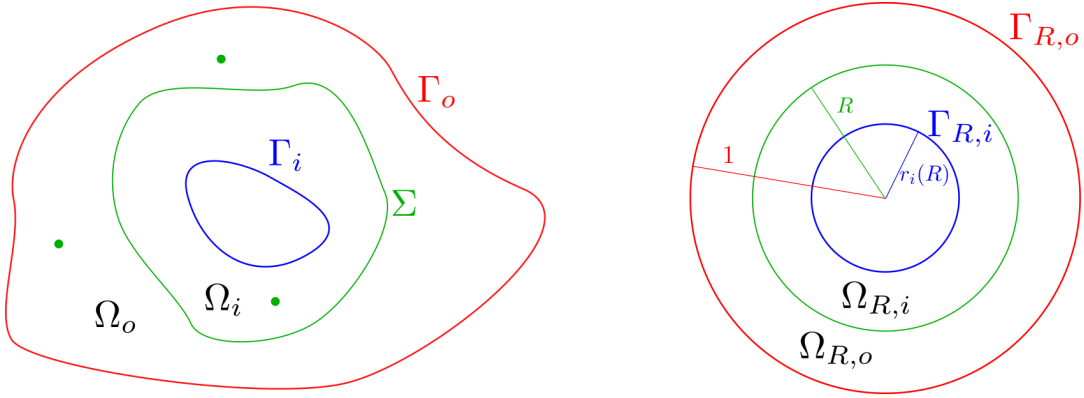}
\caption{
On the left is a generic example of a ring shaped domain satisfying the hypotheses of Theorem~\ref{thm:curv_bound_Sigma}. The thesis of  Theorem~\ref{thm:curv_bound_Sigma} is that under the additional assumption $R_o=R_i$, then necessarily the domain is rotationally symmetric, shaped as the picture on the right.
}
\label{fig:domain}
\end{figure}

\begin{proof}
Let us start from the analysis of the inner region $\Om_i$. As in the statement, let $R_i=R(\Gamma_i) = R (\Om_i)>0$ be the expected core radius of this region. Up to consider an equivalent pair $(\Om_{\lambda_i}, u_{\lambda_i})$ as in~\eqref{eq:scaling}, with $\lambda_i = \sqrt{(u_{R_i})_{\max} / u_{\max}}$, we may assume that Normalization~\ref{norm} is in force on the region $\Om_i$. Hence, applying  Proposition~\ref{pro:curv_bound_Sigma} in $\Om_i$ we obtain the upper bound
\begin{equation}
%\label{eq:curv_bound_inner}
\kappa_i \,\leq\,
\begin{dcases}
-1/R_i & \hbox{if $\overline\tau(\Gamma_i)<\sqrt{2}$\,,}
\\
1/R_i & \hbox{if $\overline\tau(\Gamma_i)\geq \sqrt{2}$\,.}
\end{dcases}
\end{equation}
where $\kappa_i$ is the curvature of $\lambda_i \Sigma$, computed with respect to the unit normal pointing outside $\Om_i$. It is immediate to realize that only the second case is allowed, for if 
$\overline\tau(\Gamma_i)<\sqrt{2}$, then the curvature of the simple closed curve $\lambda_i \Sigma$ would be negative at each point. On the other hand, our choice of the unit normal implies that $\lambda_i \Sigma$ is oriented in the counterclockwise direction, so that the integral of $\kappa_i$ along $\lambda_i \Sigma$ must be equal to $2\pi>0$. Therefore $\overline\tau(\Gamma_i)\geq \sqrt{2}$, and in terms of the unnormalized quantities, the valid upper bound reads 
\begin{equation}
\label{eq:curv_bound_i}
\kappa(p) \sqrt{u_{\max}} \, \leq \, \frac{\sqrt{(u_{R_i})_{\max}}}{R_i} \, ,
\end{equation}
with $\kappa_i$ as in the statement if the theorem. A similar argument, based on Proposition~\ref{pro:curv_bound_Sigma}, leads to the desired lower bound
\begin{equation}
\label{eq:curv_bound_o}
\frac{\sqrt{(u_{R_o})_{\max}}}{R_o} \, \leq \, \kappa(p) \sqrt{u_{\max}} \, .
\end{equation}
The only relevant difference is that, when working in the outer region $\Om_o$, one cannot exclude a priori the case $\overline\tau(\Om_o)\geq \sqrt{2}$. This motivates the assumption $\overline\tau(\Om_o) < \sqrt{2}$ in the statement of the theorem. Combining~\eqref{eq:curv_bound_i} and~\eqref{eq:curv_bound_o}, we obtain~\eqref{eq:curv_pinch} and the fact that $R_o\geq R_i$ follows immediately from the fact that $R \mapsto {\sqrt{(u_{R})_{\max}}}/{R}$ is nonincreasing.

If $R_i=R=R_o$, for some $0<R<1$, then the curvature of $\Sigma$ is necessarily constant, and up to normalize everything so that $\sqrt{u_{\max}} = \sqrt{(u_{R})_{\max}}$, we have that $\kappa \equiv 1/R$ on the whole $\Sigma$. It follows that $\Sigma$ is a round circle of radius $R$. 
Now we observe that, in a neighborhood $U$ of $\Sigma$, our function $u$ solves the following initial value problem
\begin{equation}
\label{eq:prob_on_Sigma}
\begin{dcases}
\Delta u=-2, & \mbox{in }\, U\,,\\
\quad \!u= (u_R)_{\max}, & \mbox{on } \Sigma\,, \\
\frac{\pa u}{\pa\nu}=0, & \mbox{on } \Sigma\,.  
\end{dcases}
\end{equation}
The coefficients appearing in the above problem are clearly analytic, and $\Sigma$ is a noncharacteristic curve for $\De u=-2$, as there are no characteristic hypersurfaces for an elliptic PDE. Therefore, we can invoke the uniqueness statement in the Cauchy-Kovalevskaja Theorem, applied to the initial value problem~\eqref{eq:prob_on_Sigma}. On the other hand, since $\Sigma$ is a circle of radius $R$, we immediately check that the ring-shaped model solution solution~\eqref{eq:ST} of core radius $R$ also satisfies~\eqref{eq:prob_on_Sigma}. The uniqueness of the solution then implies that $(\Omega,u)$ must coincide with the model solution~\eqref{eq:ST} in a neighborhood of $\Sigma$. From the analyticity of $u$ it follows that the two solutions coincide everywhere.
\end{proof}

We are now ready to prove our main comparison result, namely Theorem~\ref{thm:B}.

\begin{proof}[Proof of Theorem~\ref{thm:B}]
We only need to argue that if $\Omega$ is a ring-shaped domain and $u$ has infinitely many maximum points, then a simple closed curve $\Sigma \subseteq {\rm MAX}(u)$ as in the statement of Theorem~\ref{thm:curv_bound_Sigma} does actually exist. To see this, we first recall from the {\L}ojasiewicz Structure Theorem~\cite{Lojasiewicz_2} (see also~\cite[Theorem~6.3.3]{Kra_Par}), that locally the set ${\rm Crit}(u)$ of the critical points of a nonconstant  real analytic function $u:\mathbb{R}^n \rightarrow \mathbb{R}$, has the structure of a real analytic sub-variety, whose strata may in principle be of any integer dimension between $0$ (points) and $(n-1)$ (top stratum). Moreover, the zero dimensional stratum is discrete and all the lower dimensional strata lie in the topological closure of the top stratum, whenever the latter is nonempty. In particular, in the case under consideration, we have that the zero dimensional stratum of ${\rm MAX}(u) \subset {\rm Crit}(u)$ must be finite, so that the ($1$-dimensional) top stratum is necessarily nonempty, as $u$ has infinitely many maximum points. It turns out that the top stratum of ${\rm MAX}(u)$ enjoys further regularity properties in the present context. In fact, as proven in~\cite[Corollary 3.4]{Bor_Chr_Maz}, if $\Sigma$ is a connected component of the top stratum and $\na^2 u$ is nowhere vanishing in $\overline\Sigma$, then $\Sigma = \overline\Sigma$, and $\Sigma$ is a real analytic simple closed curve. In particular ${\rm MAX}(u)$ is given by a finite number of isolated points and a finite number of isolated simple closed curves. An elementary application of the Strong Maximum Principle shows that the only possibility is that the top stratum of ${\rm MAX}(u)$ is given by one single simple closed curve $\Sigma$ dividing the ring-shaped domain $\Om$ into two region $\Om_i$ and $\Om_o$ as the ones described in the statement of Theorem~\ref{thm:curv_bound_Sigma}. This latter can now be invoked to complete the proof of Theorem~\ref{thm:B}.
\end{proof}

\section{Proof of Theorem~\ref{thm:const_grad}: Pohozaev identity and a priori length bounds}
\label{sec:theo_A}

In this section, we aim at proving our main classification result for ring-shaped solution to problem~\eqref{eq:prob_SD}, namely Theorem~\ref{thm:const_grad}. It states that if $u$ has infinitely many maximum points, then it must be rotationally symmetric, provided $|\na u|$ is constant on either $\Gamma_i$ or $\Gamma_o$. In order to achieve this result, we are going to exploit the latter condition mostly in two ways. The first one is presented in Subsection~\ref{sub:poho}, where by means of a Pohozaev identity we establish a natural correspondence between the geographical location of inner and outer regions and the possible range of their NWSS (see Proposition~\ref{pro:poho_ST}). The second one is presented in Subsection~\ref{sub:length}, where, combining the constancy of $|\na u|$ with the curvature bounds of the previous section, we obtain {\em a priori} bounds on the length of the boundary components of a given region. Theorem~\ref{thm:const_grad} will then be deduced in Subsection~\ref{sub:thmB} with the help of two other {\em a priori} bounds involving the length of the top stratum of ${\rm MAX}(u)$. These are proven in Propositions~\ref{pro:area_bound_Sigma} and~\ref{pro:area_lower_bound_mon} below.

%%%%%%%%%%%%%%%%%%%%%%%%%%%%%%%%

\subsection{A Pohozaev identity}
\label{sub:poho}

%%%%%%%%%%%%%%%%%%%%%%%%%%%%%%%%

In~\cite{Weinberger} an alternative proof of the result of Serrin~\cite{Serrin} is given using a suitable {\em Pohozaev identity}. In this subsection, we aim at providing a similar tool also in our context. 

Following~\cite{Gov_Ors}, a general version of the Pohozaev identity can be written on any $n$-dimensional Riemannian manifold $(M,g)$, for any vector field $X$ and any symmetric divergence free $(2,0)$-tensor field $B$:
\begin{equation}
\label{eq:pohozaev_ST}
\int_{M} X({\rm tr}(B))\, \rmd\mu\,=\,\frac{n}{2}\int_M\langle\mathring{B}|\mathcal{L}_X g\rangle \, \rmd\mu-n\int_{\pa M}\!\!\!\mathring{B}(X,\nu)\, \rmd\sigma\,,
\end{equation}
where $\nu$ is the outward unit normal to the boundary $\pa M$. We want to apply this identity to the manifold with boundary $\overline{N}$, where $N$ is a connected component of $\Om\setminus{\rm MAX}(u)$. 
To simplify the notation, we agree that 
\begin{equation*}
\partial \overline{N} \, = \, \Gamma_N \, \sqcup \, \Sigma_N \, , 
\end{equation*}
where $\Gamma_N = \partial \overline{N} \cap \pa \Om = \partial {N} \cap \pa \Om$ and $\Sigma_N = \partial \overline{N} \cap {\rm MAX}(u)$. Both $\Gamma_N$ and $\Sigma_N$ will be assumed to be smooth. The tensor $B$ is defined as follows
$$
B \, = \, du\otimes du+\left(2u-\frac{1}{2}|\D u|^2\right)g_{\R^2} \, .
$$
It is readily checked that if $u$ solves~\eqref{eq:prob_SD} then $B$ is divergence free. With this choices, 
the Pohozaev identity~\eqref{eq:pohozaev_ST} rewrites as
\begin{multline}
\label{eq:pohozaev_u_ST}
\int_{N}\left[ \langle \D_{\!\D u}X \, | \,\D u \rangle -\frac{1}{2}|\D u|^2{\rm div}(X)-2\langle\D u\,|\,X\rangle\right]\rmd\mu \, = 
\\
=\,-\int_{\pa \overline{N}}\left[\frac{1}{2}|\D u|^2\langle X\,|\,\nu\rangle-\langle\D u\,|\,X\rangle\langle\D u\,|\,\nu\rangle\right]\rmd\sigma.
\end{multline}
As $|\na u|= 0$ on $\Sigma_N$, the square bracket on the right hand side of~\eqref{eq:pohozaev_u_ST} is also vanishing on $\Sigma_N$. Observing that on $\Gamma_N$ the outer unit normal can be expressed as $\nu=-\D u/|\D u|$, the right hand side of~\eqref{eq:pohozaev_u_ST} can be rewritten as follows:
$$
-\int_{\pa \overline{N}}\left[\frac{1}{2}|\D u|^2\langle X\,|\,\nu\rangle-\langle\D u\,|\,X\rangle\langle\D u\,|\,\nu\rangle\right]\rmd\sigma\,=\,\frac{1}{2}\int_{\Gamma_N}|\D u|^2\langle X\,|\,\nu\rangle\rmd\sigma\,.
$$
Now we let $X$ be the position vector field of $\mathbb{R}^2$, i.e. $X(x) = x$ for every $x \in \mathbb{R}^2$. This choice implies that $\langle \D_{\!\D u}X \, | \,\D u \rangle = |\na u|^2$ and ${\rm div}(X)=2$, so that equation~\eqref{eq:pohozaev_u_ST} becomes
\begin{equation}
\label{eq:pohozaev_u_X_ST}
-4\int_{N}\langle\D u\,|\, x\rangle\,\rmd\mu\,=\,\int_{\Gamma_N}|\D u|^2\langle x\,|\,\nu\rangle\,\rmd\sigma \, .
\end{equation}
On the other hand, integrating by parts, we obtain the following identities
\begin{align}
\notag
\int_{N}\langle\D u\,|\, x\rangle\rmd\mu\,&=\,-\int_{N} u\,{\rm div}(x) \, \rmd\mu+\int_{\pa \overline{N}} u\,\langle x\,|\,\nu\rangle \, \rmd\sigma
\\
\label{eq:aux_poho_ST}
&=\,-2\int_{N} u\,\rmd\mu+\int_{\Gamma_N} \!\! u\,\langle x\,|\,\nu\rangle\rmd\sigma+\int_{\Sigma_N} \!\! u\,\langle x\,|\,\nu\rangle\rmd\sigma\,,
\\
\notag
2\,|N|\,=\,\int_{N}{\rm div}(x)\,\rmd\mu\,
&=\,\int_{\pa \overline{N}}\langle x\,|\,\nu\,\rangle \, \rmd\sigma
\\
\label{eq:aux2_poho_ST}
&=\int_{\Gamma_N} \!\! \langle x\,|\,\nu\rangle \, \rmd\sigma+\int_{\Sigma_N}\!\!\langle x\,|\,\nu\rangle \, \rmd\sigma\,.
\end{align}
Now, using the fact that $u=0$ on $\Gamma_N$ and $u=\umax$ on $\Sigma_N$, from~\eqref{eq:aux_poho_ST} we get
\begin{align*}
\int_{N}\langle\D u\,|\,x\rangle \, \rmd\mu\,&=\,-2\int_N u\,\rmd\mu+\umax\int_{\Sigma_N} \!\!\langle x\,|\,\nu\rangle \, \rmd\sigma
\\
&=\,-2\int_N u\,\rmd\mu+\umax\left[2\,|N|-\int_{\Gamma_N} \!\! \langle x\,|\,\nu\rangle \, \rmd\sigma\right]\,,
\end{align*}
where in the latter equality we have used~\eqref{eq:aux2_poho_ST}.
Substituting in~\eqref{eq:pohozaev_u_X_ST} and summarizing the above computations, we get
%\begin{equation}
%\label{eq:pohozaev_u_X_final_ST}
%8\int_{N}(u-\umax)\, \rmd\mu\,=\,\int_{\Gamma_N} \!\!\big(|\D u|^2-4\umax\big)\langle x\,|\,\nu\rangle \, \rmd\sigma\,.
%\end{equation}
%Summarizing the above computations, we obtain 
the following lemma
\begin{lemma}
Let $(\Om,u)$ be a solution to problem~\eqref{eq:prob_SD}, and let $N$ be a connected component of $\Omega\setminus{\rm MAX}(u)$, then it holds
\begin{equation}
\label{eq:pohozaev_u_X_final_ST}
8\int_{N}(u-\umax)\, \rmd\mu\,=\,\int_{\Gamma_N} \!\!\big(|\D u|^2-4\umax\big)\langle x\,|\,\nu\rangle \, \rmd\sigma\,,
\end{equation}
where $x$ is the position vector in $\mathbb{R}^2$ and $\Gamma_N= \pa {N} \cap \pa \Om$.
\end{lemma}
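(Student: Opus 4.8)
The plan is to specialise the general Pohozaev identity~\eqref{eq:pohozaev_ST} to the manifold with boundary $\overline{N}$, using the position vector field $X=x$ of $\R^2$ together with the symmetric $2$-tensor $B = du\otimes du + \big(2u - \tfrac{1}{2}|\D u|^2\big)\go$ introduced above. The first thing I would check is that $B$ is divergence free whenever $u$ solves~\eqref{eq:prob_SD}: expanding ${\rm div}(B)$ and using the symmetry of $\DD u$ together with $\De u=-2$, every term cancels. Since $X=x$ is the position field of flat $\R^2$, one has $\mathcal{L}_X\go = 2\,\go$, ${\rm div}(X)=2$ and $\langle\D_{\D u}X\,|\,\D u\rangle = |\D u|^2$; substituting all of this into~\eqref{eq:pohozaev_ST} yields the intermediate identity~\eqref{eq:pohozaev_u_ST}. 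To make the boundary term manageable I would then use that $|\D u|\equiv 0$ on $\Sigma_N = \pa\overline{N}\cap{\rm MAX}(u)$, which kills the contribution of $\Sigma_N$, and that $\nu = -\D u/|\D u|$ along $\Gamma_N = \pa N\cap\pa\Om$, which turns the $\Gamma_N$-contribution into $\tfrac{1}{2}\int_{\Gamma_N}|\D u|^2\langle x\,|\,\nu\rangle\,\rmd\sigma$; this reduces~\eqref{eq:pohozaev_u_ST} to~\eqref{eq:pohozaev_u_X_ST}.

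The remaining work is to rewrite the bulk quantity $\int_N\langle\D u\,|\,x\rangle\,\rmd\mu$. Integrating by parts and using ${\rm div}(x)=2$ gives the identity~\eqref{eq:aux_poho_ST}, while the divergence theorem applied to $x$ alone gives $2|N| = \int_{\pa\overline{N}}\langle x\,|\,\nu\rangle\,\rmd\sigma$, that is~\eqref{eq:aux2_poho_ST}. Plugging in the boundary values $u=0$ on $\Gamma_N$ and $u=\umax$ on $\Sigma_N$, one expresses $\int_{\Sigma_N}\langle x\,|\,\nu\rangle\,\rmd\sigma$ in terms of $|N|$ and $\int_{\Gamma_N}\langle x\,|\,\nu\rangle\,\rmd\sigma$, hence obtains a closed formula for $\int_N\langle\D u\,|\,x\rangle\,\rmd\mu$. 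Substituting this back into~\eqref{eq:pohozaev_u_X_ST} and collecting the terms proportional to $\langle x\,|\,\nu\rangle$ on $\Gamma_N$ against those proportional to $u-\umax$ in the interior produces exactly~\eqref{eq:pohozaev_u_X_final_ST}.

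Everything here is a routine manipulation once the pair $(B,X)$ is chosen correctly; there is no genuine obstacle to expect. The only points that deserve a moment of attention are the verification that $B$ is divergence free — this is where the equation $\De u=-2$ is really used — and the bookkeeping of the two boundary pieces $\Gamma_N$ and $\Sigma_N$, which is clean here precisely because both are assumed smooth, so that the divergence theorem applies directly without any further discussion of the regularity of ${\rm MAX}(u)$.
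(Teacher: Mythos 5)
Your proposal is correct and follows essentially the same route as the paper: the same tensor $B$ and position field $X$ in the Pohozaev identity~\eqref{eq:pohozaev_ST}, the same reduction of the boundary term using $|\D u|=0$ on $\Sigma_N$ and $\nu=-\D u/|\D u|$ on $\Gamma_N$, and the same integrations by parts~\eqref{eq:aux_poho_ST}--\eqref{eq:aux2_poho_ST} with the boundary values $u=0$ on $\Gamma_N$ and $u=\umax$ on $\Sigma_N$. The algebra you outline indeed recombines into~\eqref{eq:pohozaev_u_X_final_ST} exactly as in the paper, and your closing remark about the smoothness of $\Gamma_N$ and $\Sigma_N$ matches the paper's standing assumption in that subsection.
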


Building on formula~\eqref{eq:pohozaev_u_X_final_ST},  we obtain the following result, granting us that whenever $|\D u|$ is constant at the boundary, outer regions have NWSS below $2$ and inner regions have NWSS above the same threshold, in accordance to what happen for ring-shaped model solution.

\begin{proposition}
\label{pro:poho_ST}
Let $(\Om,u)$ be a solution to problem~\eqref{eq:prob_SD}, and let $N$ be  a connected component of $\Om\setminus{\rm MAX}(u)$. 
Suppose that $\Gamma_N = \partial N \cap \pa\Om$ is connected and $|\D u|$ is constant on $\Gamma_N$. Then $\overline\tau(\Gamma_N)\neq \sqrt{2}$ and:
\begin{itemize}
\item $ \overline\tau(\Gamma_N)<\sqrt{2}$ if and only if $N$ is outer, i.e. $\Gamma_N$ is oriented in the counterclockwise direction; 
\item $\overline\tau(\Gamma_N)>\sqrt{2}$ if and only if $N$ is inner, i.e. $\Gamma_N$ is oriented in the clockwise direction.
\end{itemize}
\end{proposition}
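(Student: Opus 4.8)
The plan is to read off the sign of $\overline\tau(\Gamma_N)-\sqrt2$ directly from the Pohozaev identity \eqref{eq:pohozaev_u_X_final_ST}. The crucial observation is that its left-hand side is \emph{strictly negative}: $N$ is open and nonempty, and since $N$ is a connected component of $\Om\setminus{\rm MAX}(u)=\{u<\umax\}$, one has $u<\umax$ throughout $N$, whence $\int_N(u-\umax)\,\rmd\mu<0$. On the other hand, by hypothesis $|\D u|$ is constant on $\Gamma_N$, say $|\D u|\equiv c$; then the scalar factor $|\D u|^2-4\umax=c^2-4\umax$ is constant along $\Gamma_N$ and \eqref{eq:pohozaev_u_X_final_ST} becomes
\begin{equation*}
8\int_N(u-\umax)\,\rmd\mu\,=\,(c^2-4\umax)\int_{\Gamma_N}\langle x\,|\,\nu\rangle\,\rmd\sigma\,.
\end{equation*}
From Definition~\ref{def:NWSS} we have $\overline\tau(\Gamma_N)=c/\sqrt{2\umax}$, hence $\overline\tau(\Gamma_N)^2-2=(c^2-4\umax)/(2\umax)$; since $\umax>0$ and $\overline\tau(\Gamma_N)>0$, the three quantities $\overline\tau(\Gamma_N)-\sqrt2$, $\overline\tau(\Gamma_N)^2-2$ and $c^2-4\umax$ all carry the same sign. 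Everything is thus reduced to computing the sign of $\int_{\Gamma_N}\langle x\,|\,\nu\rangle\,\rmd\sigma$.

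To do this I would first note that, under the standing hypothesis, $\Gamma_N=\pa N\cap\pa\Om$ is a single connected component of $\pa\Om$, hence a smooth Jordan curve: indeed, by the Hopf Lemma $|\D u|>0$ on $\pa\Om$, so ${\rm MAX}(u)$ is a compact subset of the interior of $\Om$ and $N$ agrees with $\Om$ in a neighbourhood of $\Gamma_N$; in particular the outward unit normal $\nu$ to $\pa\overline N$ along $\Gamma_N$ is the exterior unit normal to $\Om$, i.e.\ $\nu=-\D u/|\D u|$. Let $D_N\subset\R^2$ denote the bounded region enclosed by $\Gamma_N$. Applying the divergence theorem to the position vector field $x\mapsto x$ on $D_N$, and recalling that $\int_{\Gamma_N}\nu\,\rmd\sigma=0$ (so the integral does not depend on the choice of origin), one gets $\int_{\Gamma_N}\langle x\,|\,\nu\rangle\,\rmd\sigma=2\,\varepsilon\,|D_N|$, where $\varepsilon=+1$ if $\nu$ points out of $D_N$ and $\varepsilon=-1$ if $\nu$ points into $D_N$, with $|D_N|>0$. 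Now, $\nu$ points out of $D_N$ exactly when $\Om$ lies locally on the $D_N$-side of $\Gamma_N$, which, with the convention that $\pa\Om$ is oriented leaving $\Om$ on the left, means precisely that $\Gamma_N$ is run counterclockwise; dually, $\nu$ points into $D_N$ exactly when $\Gamma_N$ is run clockwise. In other words, $\varepsilon=+1$ iff $N$ is outer, and $\varepsilon=-1$ iff $N$ is inner.

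Substituting $\int_{\Gamma_N}\langle x\,|\,\nu\rangle\,\rmd\sigma=2\,\varepsilon\,|D_N|$ in the displayed identity and using that its left-hand side is strictly negative while $|D_N|>0$ yields $\varepsilon\,(c^2-4\umax)<0$. In particular $c^2-4\umax\neq0$, i.e.\ $\overline\tau(\Gamma_N)\neq\sqrt2$. Moreover, if $N$ is outer then $\varepsilon=+1$, forcing $c^2-4\umax<0$, hence $\overline\tau(\Gamma_N)<\sqrt2$; and if $N$ is inner then $\varepsilon=-1$, forcing $c^2-4\umax>0$, hence $\overline\tau(\Gamma_N)>\sqrt2$. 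Since a Jordan curve is run either counterclockwise or clockwise, and the two possibilities are mutually exclusive, these two implications immediately upgrade to the biconditionals in the statement. The only genuinely delicate point in this argument is the orientation bookkeeping of the second paragraph — carefully matching the notion of $N$ being outer or inner with the direction of $\nu$ along $\Gamma_N$, and hence with the sign $\varepsilon$; the rest is a direct consequence of \eqref{eq:pohozaev_u_X_final_ST} together with the elementary fact that $u<\umax$ on $N$.
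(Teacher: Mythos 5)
Your proof is correct and follows essentially the same route as the paper: both rest on the Pohozaev identity~\eqref{eq:pohozaev_u_X_final_ST}, the strict negativity of its left-hand side, the constancy of $|\D u|$ on $\Gamma_N$, and the fact that the sign of $\int_{\Gamma_N}\langle x\,|\,\nu\rangle\,\rmd\sigma$ is dictated by the orientation of $\Gamma_N$. Your evaluation of that integral as $2\varepsilon|D_N|$ via the divergence theorem is in fact slightly more careful than the paper's shorthand value, but the sign bookkeeping and the conclusion are identical.
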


\begin{proof}
Since $|\D u|$ is constant on $N$, inequality~\eqref{eq:pohozaev_u_X_final_ST} gives
$$
0\,>\,8\int_{N}(u-\umax) \, \rmd\mu\,=\,\big(|\D u|^2-4\umax\big)\int_{\Gamma_N} \!\! \langle x\,|\,\nu\rangle \, \rmd\sigma\,=\,2\,\pi\,\eta\,\big(|\D u|^2-4\umax\big)\,,
$$
where $\eta=1$ if $\Gamma_N$ is oriented counterclockwise (i.e., if $N$ is outer) and $\eta=-1$ if $\Gamma_N$ is oriented clockwise (i.e., if $N$ is inner).
Therefore $\eta$ and $(|\D u|^2-4\umax)$ must have opposite sign. The wished result follows at once.
\end{proof}

In particular, notice that we have proven that, under the assumption that $|\D u|$ is constant on $\Gamma_N$, the critical case $\overline\tau(N)=\sqrt{2}$ is ruled out.

%%%%%%%%%%%%%%%%%%%%%%%%%%%%%%%%%%%%%%%%%%%%

\subsection{Length bounds}
\label{sub:length}

%%%%%%%%%%%%%%%%%%%%%%%%%%%%%%%%%%%%%%%%%%%%

In Proposition~\ref{pro:curv_bound}, we have proven a curvature bound holding at the points of the boundary $\Gamma_N = \pa N 	\cap \pa \Om$ where $|\D u|$ attains its maximum value. If we now assume that $|\D u|$ is constant on the boundary, then these curvature bounds hold at every boundary point. This fact leads to the following {\em a priori} estimates for the length of $\Gamma_N$.

\begin{proposition}
\label{pro:area_bound_Gamma}
Let $(\Om,u)$ be a solution to problem~\eqref{eq:prob_SD}, let $N$ be a connected component of $\Omega\setminus{\rm MAX}(u)$,  and let $R = R(N) \in [0,1)$ be the expected core radius associated with the region $N$. Also assume that  Normalization~\ref{norm} is in force. 
If $\Gamma_N = \pa N \cap \pa \Om$ is connected and $|\D u|$ is constant on $\Gamma_N$, then the following implications hold true:
\begin{itemize}
\item If $N$ is outer, then $ 2\pi \leq |\Gamma_N|$;

\smallskip

\item If $N$ is inner, then $|\Gamma_N|\,\leq\,2\pi r_i(R)$,
\end{itemize}
where $|\Gamma_N|$ denotes the length of $\Gamma_N$ and $r_i(R)>0$ is the inner radius defined in~\eqref{eq:ST}.                                                                                                                                   
\end{proposition}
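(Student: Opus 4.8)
The plan is to upgrade the pointwise curvature estimate of Proposition~\ref{pro:curv_bound} to an estimate valid at \emph{every} point of $\Gamma_N$, and then to integrate it against the Gauss--Bonnet (turning tangents) identity for the Jordan curve $\Gamma_N$. First I would record two preliminary reductions. Since $\Gamma_N = \pa N \cap \pa\Om$ is connected, it is a single connected component of $\pa\Om$, hence a smooth Jordan curve, and it is the only boundary component of $\Om$ lying in $\overline N$; therefore $\overline\tau(N) = \overline\tau(\Gamma_N)$ by Definition~\ref{def:NWSS}. Moreover, as $|\D u|$ is constant on $\Gamma_N$, Proposition~\ref{pro:poho_ST} applies: it excludes the borderline value $\overline\tau(N)=\sqrt2$ and tells us that $\overline\tau(N)<\sqrt2$ precisely when $N$ is outer, while $\overline\tau(N)>\sqrt2$ precisely when $N$ is inner. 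Hence we are always in one of the two branches of Proposition~\ref{pro:curv_bound}, whose standing hypotheses (Normalisation~\ref{norm}, $R=R(N)\in[0,1)$) are exactly those assumed here.

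Next I would exploit the constancy of $|\D u|$ on $\Gamma_N$: every $p\in\Gamma_N$ realises $\max_{\pa\Om\cap\overline N}|\D u|$, so the conclusion of Proposition~\ref{pro:curv_bound} holds at \emph{all} $p\in\Gamma_N$, namely $\kappa\le 1$ on $\Gamma_N$ if $N$ is outer and $\kappa\le -1/r_i(R)$ on $\Gamma_N$ if $N$ is inner, where $\kappa$ is the curvature with respect to the exterior unit normal of $\Om$, i.e. $\kappa = {\rm div}(-\D u/|\D u|)$. Integrating over $\Gamma_N$ and invoking the theorem of turning tangents (equivalently, Gauss--Bonnet on the planar region bounded by $\Gamma_N$, where the ambient curvature vanishes), one has $\int_{\Gamma_N}\kappa\,\rmd\sigma = 2\pi$ when $N$ lies on the bounded side of $\Gamma_N$ (the outer case) and $\int_{\Gamma_N}\kappa\,\rmd\sigma = -2\pi$ when $N$ lies on the unbounded side (the inner case); this is consistent with the corresponding values $\int_{\Gamma_{R,o}}\kappa\,\rmd\sigma = 2\pi$ and $\int_{\Gamma_{R,i}}\kappa\,\rmd\sigma = -2\pi$ computed on the ring-shaped model solutions. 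In the outer case this yields $2\pi = \int_{\Gamma_N}\kappa\,\rmd\sigma \le \int_{\Gamma_N}1\,\rmd\sigma = |\Gamma_N|$, and in the inner case $-2\pi = \int_{\Gamma_N}\kappa\,\rmd\sigma \le -\tfrac{1}{r_i(R)}\,|\Gamma_N|$, which rearranges to $|\Gamma_N|\le 2\pi\, r_i(R)$. This is exactly the assertion of the proposition.

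The only genuinely delicate point is the sign bookkeeping: one must verify that the curvature convention entering Proposition~\ref{pro:curv_bound} (curvature of $\pa\Om$ with respect to the exterior normal, i.e. ${\rm div}(-\D u/|\D u|)$) is precisely the one for which the total curvature of a Jordan boundary component equals $+2\pi$ when the domain sits on its bounded side, and one must match the sign $\pm2\pi$ correctly with the \emph{outer}/\emph{inner} alternative as encoded by the orientation of $\Gamma_N$ in Proposition~\ref{pro:poho_ST}; the two model computations above provide the consistency check. Everything else is a direct consequence of the previously established pointwise estimates, so no further input is needed.
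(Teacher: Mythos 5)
Your argument is correct and follows essentially the same route as the paper's proof: you promote the curvature bound of Proposition~\ref{pro:curv_bound} to a pointwise bound on all of $\Gamma_N$ using the constancy of $|\D u|$, identify the outer/inner alternative with $\overline\tau(\Gamma_N)\lessgtr\sqrt{2}$ via Proposition~\ref{pro:poho_ST}, and integrate against the total turning $\pm 2\pi$ of the Jordan curve $\Gamma_N$ with the normal pointing out of $N$. Your sign bookkeeping (total curvature $+2\pi$ in the outer case, $-2\pi$ in the inner case) matches the paper's, and you even spell out the inner case that the paper leaves to the reader.
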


\begin{proof}
In the present settings, it is easy to realize that the curvature bound~\eqref{eq:curv_bound} holds pointwise, so that we can integrate such an inequality on the whole $\Gamma_N$. In doing this, it is important to choose the orientation of $\Gamma_N$ according to the unit normal that has been employed to compute the curvature $\kappa$, namely the one that points outside $N$. Summoning Proposition~\ref{pro:poho_ST}, we have that if $N$ is outer, then $\overline\tau(N)=\overline\tau(\Gamma_N) <\sqrt{2}$. In particular, $\Gamma_N$ must be oriented in the counterlockwise direction, and first inequality in~\eqref{eq:curv_bound} is in force along any point of $\Gamma$. Integrating along $\Gamma$ leads to 
$$
2 \pi \leq |\Gamma_N|.
$$
Following the same path, one can easily prove also the second implication. The details are left to the interested reader.
%, obtaining
%$$
%2\,\pi\,\eta\,\leq\,
%\begin{dcases}
%|\Gamma_N|       & \hbox{ if $\overline\tau(\Gamma_N)<2$\,,}
%\\
%-|\Gamma_N|/r_i(R) & \hbox{ if $\overline\tau(\Gamma_N)>2$\,,}
%\end{dcases}
%$$
%where $\eta=1$ if $\Gamma_N$ is oriented in the counterclockwise direction (i.e., if $N$ is {\color{blue}outer}) and $\eta=-1$ if $\Gamma_N$ is oriented in the clockwise direction (i.e., if $N$ is {\color{blue}inner}). We conclude by summoning Proposition~\ref{pro:poho_ST}, that tells us that $\overline\tau(\Gamma_N)<2$ if and only if $N$ is outer and $\overline\tau(\Gamma_N)>2$ if and only if $N$ is inner. 
\end{proof}

In order to prove Theorem~\ref{thm:const_grad}, we will also need an {\em a priori} bound for the length of $\Sigma_N = \partial \overline{N} \cap {\rm MAX}(u)$, in the case where $\Sigma_N$ is smooth. In contrast with Proposition~\ref{pro:area_bound_Gamma}, we notice that no assumption is made here about the constancy of $|\na u|$ at $\Gamma_N$. On the other hand the entire strategy of the proof is in a way very similar to the one of Proposition~\ref{pro:area_bound_Gamma} above.

\begin{proposition}
\label{pro:area_bound_Sigma}

Let $(\Om,u)$ be a solution to problem~\eqref{eq:prob_SD}, let $N$ be a connected component of $\Omega\setminus{\rm MAX}(u)$,  and let $R = R(N) \in [0,1)$ be the expected core radius associated with the region $N$. Also assume that  Normalization~\ref{norm} is in force. 
If $\Sigma_N = \pa \overline{N} \cap {\rm MAX}(u) \neq \emptyset$ is a smooth simple closed curve, then the following implications hold true:

\begin{itemize}
\item If $\overline\tau(N)<\sqrt{2}$, then $N$ is outer and $|\Sigma_N|\leq 2\pi R\,$;

\smallskip

\item If $\overline\tau(N)\geq \sqrt{2}$ and $N$ is inner, then $2\pi R \leq |\Sigma_N|\,$,
\end{itemize}
where $|\Sigma_N|$ denotes the length of $\Sigma_N$.  
\end{proposition}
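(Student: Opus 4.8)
The plan is to reproduce, essentially verbatim, the strategy used for Proposition~\ref{pro:area_bound_Gamma}, replacing the boundary curve $\Gamma_N$ by the curve $\Sigma_N \subseteq {\rm MAX}(u)$ and the curvature bounds of Proposition~\ref{pro:curv_bound} by those of Proposition~\ref{pro:curv_bound_Sigma}. Since $\Sigma_N$ is assumed to be a smooth simple closed curve contained in ${\rm MAX}(u)\cap\overline{N}$, it is part of the $1$-dimensional top stratum, so Proposition~\ref{pro:curv_bound_Sigma} applies at each of its points: the curvature $\kappa$ of $\Sigma_N$, computed with respect to the unit normal that is exterior to $N$, satisfies $\kappa\leq -1/R$ pointwise on $\Sigma_N$ when $\overline\tau(N)<\sqrt2$, and $\kappa\leq 1/R$ pointwise on $\Sigma_N$ when $\overline\tau(N)\geq\sqrt2$ (the threshold value $\overline\tau(N)=\sqrt2$ being covered by the limiting procedure already carried out in the proof of Proposition~\ref{pro:curv_bound_Sigma}).

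The second ingredient is purely topological. The curve $\Sigma_N$ is a Jordan curve, and the integral along it of the curvature taken with respect to a fixed continuous choice of unit normal equals $\pm 2\pi$, the sign being $+2\pi$ when the chosen normal points out of the bounded region enclosed by $\Sigma_N$, and $-2\pi$ when it points into it. This is the same elementary fact already used for the boundary curves $\Gamma_N$ in the proof of Proposition~\ref{pro:area_bound_Gamma}. Now, because $N$ is connected and disjoint from $\Sigma_N\subseteq{\rm MAX}(u)$, it lies entirely on one side of $\Sigma_N$: either $N$ is contained in the Jordan domain bounded by $\Sigma_N$ (this is what ``$N$ is inner'' means here), in which case the exterior-to-$N$ normal points out of that domain and hence $\int_{\Sigma_N}\kappa\,\rmd\sigma=+2\pi$; or $N$ lies outside $\Sigma_N$ (``$N$ is outer''), in which case the exterior-to-$N$ normal points into the domain and $\int_{\Sigma_N}\kappa\,\rmd\sigma=-2\pi$.

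Putting the two ingredients together splits the statement into its two cases. If $\overline\tau(N)<\sqrt2$, then $\kappa\leq -1/R<0$ everywhere on $\Sigma_N$, so $\int_{\Sigma_N}\kappa\,\rmd\sigma<0$; by the dichotomy above, this is possible only if $N$ lies outside $\Sigma_N$, that is, $N$ is outer, and then $\int_{\Sigma_N}\kappa\,\rmd\sigma=-2\pi$. Integrating the pointwise bound $\kappa\leq -1/R$ over $\Sigma_N$ gives $-2\pi=\int_{\Sigma_N}\kappa\,\rmd\sigma\leq -|\Sigma_N|/R$, i.e. $|\Sigma_N|\leq 2\pi R$. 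If instead $\overline\tau(N)\geq\sqrt2$ and $N$ is inner, then $N$ lies inside $\Sigma_N$, so $\int_{\Sigma_N}\kappa\,\rmd\sigma=+2\pi$, and integrating the bound $\kappa\leq 1/R$ over $\Sigma_N$ yields $2\pi=\int_{\Sigma_N}\kappa\,\rmd\sigma\leq |\Sigma_N|/R$, i.e. $2\pi R\leq|\Sigma_N|$.

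The argument is routine once Proposition~\ref{pro:curv_bound_Sigma} is available, and I do not expect a genuine obstacle; the one point that needs care — exactly as in Proposition~\ref{pro:area_bound_Gamma} — is the bookkeeping of signs, i.e. making sure that the normal with respect to which $\kappa$ is measured in Proposition~\ref{pro:curv_bound_Sigma} is matched with the orientation of $\Sigma_N$ for which the total curvature evaluates to $+2\pi$ or $-2\pi$, so that the resulting comparison between $\int_{\Sigma_N}\kappa\,\rmd\sigma$ and $|\Sigma_N|/R$ is the useful one. A secondary point worth making explicit is that, in the first case, it is precisely the sign of the total curvature, forced by the strict inequality $\kappa\leq -1/R<0$, that pins down $N$ as the outer region; this is the extra assertion contained in the statement.
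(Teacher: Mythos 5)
Your proposal is correct and follows essentially the same route as the paper: integrate the pointwise curvature bounds of Proposition~\ref{pro:curv_bound_Sigma} over the Jordan curve $\Sigma_N$, use that the total curvature with respect to the exterior-to-$N$ normal is $\pm 2\pi$ according to whether $N$ is inner or outer, and in the case $\overline\tau(N)<\sqrt{2}$ deduce from the strict negativity of the curvature that $N$ must be outer. The sign bookkeeping you flag is exactly the point the paper handles via its orientation parameter $\eta=\pm1$, and your treatment of it matches.
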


\begin{proof}
We proceed as in the proof of Proposition~\ref{pro:area_bound_Gamma}. Integrating the curvature bounds given by Proposition~\ref{pro:curv_bound_Sigma} on $\Sigma_N$, and remembering that $\Sigma_N$ has been oriented with respect to the orientation induced by $N$, we get
$$
2\,\pi\,\eta\,\leq\,
\begin{dcases}
-|\Sigma_N|/R       & \hbox{ if $\overline\tau(N)<\sqrt{2}$\,,}
\\
|\Sigma_N|/R & \hbox{ if $\overline\tau(N)\geq \sqrt{2}$\,,}
\end{dcases}
$$
where $\eta=1$ if $\Sigma_N$ is oriented in the counterclockwise direction and $\eta=-1$ if $\Sigma_N$ is oriented in the clockwise direction. 

Notice that, since we are not assuming that $|\D u|$ is constant on $\Gamma_N$, Proposition~\ref{pro:poho_ST} does not apply, so that the usual upper (resp. lower) bound on $\overline\tau(N)$ is not necessarily equivalent to being $N$ an outer (resp. inner) region. Nevertheless, if $\overline\tau(N)<\sqrt{2}$, we can still deduce that $N$ must be outer, otherwise $\Sigma$ would be oriented in the counterclockwise direction and one would have  $2\pi R \leq -|\Sigma_N|$, which is clearly impossible.
\end{proof}

We conclude this subsection with the third length bound, where the length $|\Gamma_N|$ of the boundary portion of a given region $N$ is compared to the length $|\Sigma_N|$ of a smooth portion of the top stratum of ${\rm MAX}(u)$.
\begin{proposition}
\label{pro:area_lower_bound_mon}

Let $(\Om,u)$ be a solution to problem~\eqref{eq:prob_SD}, let $N$ be a connected component of $\Omega\setminus{\rm MAX}(u)$,  and let $R = R(N) \in [0,1)$ be the expected core radius associated with the region $N$. Also assume that  Normalization~\ref{norm} is in force. 
If $\Sigma_N = \pa \overline{N} \cap {\rm MAX}(u) \neq \emptyset$ is a smooth simple closed curve, and $\Gamma_N = \pa N \cap \pa \Om$ is connected, then the following implications hold true:
\begin{itemize}
\item If $\overline\tau(N)<\sqrt{2}$, then $|\Sigma_N| /R \leq |\Gamma_N| \,$;

\smallskip

\item If $\overline\tau(N)> \sqrt{2}$, then $|\Sigma_N| /R \leq |\Gamma_N| /r_i(R)$,
\end{itemize}
where $|\Sigma_N|$ and $|\Gamma_N|$ denote the lengths of $\Sigma_N$ and $\Gamma_N$, respectively. Moreover, if the equality holds, 
then $(\Omega,u)$ corresponds to the ring-shaped model solution~\eqref{eq:ST} with core radius $R$.
%
%\bigskip
%
%Let $(\Omega,u)$ be a solution to problem~\eqref{eq:prob_SD}, let $N$ be a connected component of $\Om\setminus{\rm MAX}(u)$ with virtual mass $m=\mu(N,u)<1$, and suppose Normalization~\ref{norm} is in force. Then
%\begin{equation*}
%|\Sigma|\,\leq\,
%\begin{dcases}
%\sqrt{m}\,|\Gamma_N|       & \hbox{ if $\tau(\Gamma_N)<2$\,,}
%\\
%(\sqrt{m}/r_m)\,|\Gamma_N|  & \hbox{ if $\tau(\Gamma_N)>2$\,.}
%\end{dcases}
%\end{equation*}
%Moreover, if the equality holds, 
%then $(\Omega,u)$ is the rotationally symmetric solution~\eqref{eq:ST} with mass $m$.
\end{proposition}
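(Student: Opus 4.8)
The plan is to run a divergence-theorem computation on $\overline N$ with a vector field tailored to the pseudo-radial function $\Psi$, designed so that on the model solutions it reduces to the two-dimensional Coulomb field $x/|x|^2$ (for which the claimed inequalities become equalities, since $\mathrm{div}(x/|x|^2)=0$ away from the origin). Recall from Definition~\ref{def:psudo} that $\Psi$ is constant on $\Gamma_N$ — equal to $1$ when $\overline\tau(N)<\sqrt2$ and to $r_i(R)$ when $\overline\tau(N)>\sqrt2$ — while $\Psi\equiv R$ on $\Sigma_N$. I would set
$$
X\,:=\,\frac{\na\Psi}{\Psi\,|\na\Psi|}\,,
$$
which is smooth on $N\setminus{\rm Crit}(u)$ and enjoys the identity $\langle X,\na\Psi/|\na\Psi|\rangle\equiv1/\Psi$, so that the flux of $X$ across any regular level set $\{\Psi=\rho\}$ equals $\mathscr H^1(\{\Psi=\rho\})/\rho$. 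Applying the divergence theorem on $\{\rho_1\le\Psi\le\rho_2\}$ and letting $\rho_1,\rho_2$ tend to the extreme values of $\Psi$ on $N$ — a limit one justifies by standard arguments (smoothness of $\Gamma_N$ and $\Sigma_N$, discreteness of the critical levels of $u$, and the bound $|\na\Psi|\le1$ coming from the gradient estimate of Theorem~\ref{thm:min_pr_SD}) — one obtains
$$
\int_N{\rm div}\,X\,\rmd\mu\,=\,|\Gamma_N|-\frac{|\Sigma_N|}{R}\ \ \hbox{if }\overline\tau(N)<\sqrt2\,,\qquad\int_N{\rm div}\,X\,\rmd\mu\,=\,\frac{|\Sigma_N|}{R}-\frac{|\Gamma_N|}{r_i(R)}\ \ \hbox{if }\overline\tau(N)>\sqrt2\,.
$$
Hence both inequalities of the statement follow once one shows that ${\rm div}\,X\ge0$ on $N$ in the first case and ${\rm div}\,X\le0$ on $N$ in the second.

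The next step is to compute ${\rm div}\,X$. Using $\na\Psi=\dot\psi\,\na u$ together with~\eqref{eq:dpsideu_SD}, the expression~\eqref{eq:Wm} for $W_R$, and the equation $\De u=-2$, a routine manipulation shows that, in both regimes, the required sign of ${\rm div}\,X$ is equivalent to the single pointwise inequality
\begin{equation*}
\DD u(\na u,\na u)\,\ge\,\frac{|\na u|^4}{\Psi^2-R^2}\,-\,2\,|\na u|^2\qquad\hbox{on }N\,,\tag{$\star$}
\end{equation*}
where $\Psi^2>R^2$ throughout an outer region and $\Psi^2<R^2$ throughout an inner one. This inequality is saturated on the ring-shaped model solutions, where $\Psi=|x|$ and $\DD u_R(\na u_R,\na u_R)=-|\na u_R|^2\big(1+R^2/|x|^2\big)$, which is the consistency check one expects.

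The heart of the proof — and the step I expect to be the main obstacle — is therefore $(\star)$. I would establish it by revisiting the completion of square exploited in the proof of Theorem~\ref{thm:min_pr_SD}: setting $T:=\DD u+\frac{2R^2}{(R^2-\Psi^2)^2}\,du\otimes du+\big(1-\frac{R^2}{(R^2-\Psi^2)^2}|\na u|^2\big)\,g_{\R^2}$, an elementary algebraic identity (which uses $W_R=(\Psi^2-R^2)^2/\Psi^2$) rewrites $(\star)$ as
$$
T(\na u,\na u)\,\ge\,-\,\frac{|\na u|^2\,\big(W_R-W\big)}{W_R}\,,
$$
whose right-hand side is $\le0$ because $0\le W\le W_R$ by Theorem~\ref{thm:min_pr_SD}. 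It then remains to prove this last bound — the expectation being that the Bochner identity, the trivial inequality $|T|^2\ge0$, and once more the gradient estimate $W\le W_R$ combine to give it, in the very same spirit in which they yield the conclusion of Theorem~\ref{thm:min_pr_SD} — and, crucially, that equality in it can hold only at points where $T\equiv0$, equivalently where $W\equiv W_R$.

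Granting $(\star)$, the rigidity statement is then immediate: if equality holds in the proposition, then $\int_N{\rm div}\,X=0$, and since ${\rm div}\,X$ has a fixed sign this forces ${\rm div}\,X\equiv0$ on $N$, so that $(\star)$ holds with equality throughout $N$; by the previous remark this means $W\equiv W_R$ on $N$, and Theorem~\ref{thm:min_pr_SD} then identifies $(\Om,u)$, in the normalisation in force, with the ring-shaped model solution~\eqref{eq:ST} of core radius $R$.
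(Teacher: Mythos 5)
Your reduction is algebraically sound: with $X=\na\Psi/(\Psi|\na\Psi|)$ one indeed finds, in the outer case, ${\rm div}\,X=\frac{1}{\Psi|\na u|}\big[\,2-\frac{|\na u|^2}{\Psi^2-R^2}+\frac{\DD u(\na u,\na u)}{|\na u|^2}\big]$ (and the analogous expression in the inner case), so the whole proof hinges on the pointwise Hessian inequality $(\star)$. That is exactly where the genuine gap lies. Note that $(\star)$ is equivalent to $\langle\na u,\na(W-W_R)\rangle\geq\frac{2W}{\Psi^2-R^2}\,(W-W_R)$, i.e.\ to a pointwise lower bound on a \emph{derivative} of the deficit $W-W_R$; none of the tools you invoke produces such information. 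Theorem~\ref{thm:min_pr_SD} controls only the sign of $\beta(W-W_R)$ through the maximum principle and says nothing about $\na(W-W_R)$ at interior points, while the Bochner identity together with $|T|^2\geq0$ bounds $|\DD u|^2$ from below, not the Rayleigh quotient $\DD u(\na u,\na u)/|\na u|^2$. In fact $(\star)$ fails in general: the hypotheses do not exclude critical points of $u$ in the interior of $N$ below the maximum level, and near a saddle (eigenvalues $a>0$ and $-2-a$ of $\DD u$), at points where $\na u$ is aligned with the negative eigendirection one has $\DD u(\na u,\na u)=-(2+a)|\na u|^2$, which violates $(\star)$ as $|\na u|\to0$ since $\Psi^2-R^2$ stays bounded away from zero there; moreover $X$ itself is singular at such points, which also obstructs your application of the divergence theorem. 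The rigidity part inherits the same problem, as it rests on the unproven claim that equality in $(\star)$ forces $W=W_R$.

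The paper avoids all of this by \emph{not} normalising the field: it integrates $\na u/(\Psi^2-R^2)=-\na\Psi/\Psi$, whose divergence is computed using only $\De u=-2$ and~\eqref{eq:dpsideu_SD} (no Hessian enters) to be $\frac{2\Psi^2}{(\Psi^2-R^2)^3}\,(W-W_R)$, hence has a definite sign on $N$ by Theorem~\ref{thm:min_pr_SD} alone. The price is that the fluxes are no longer exactly $|\Gamma_N|/\Psi$ and $|\Sigma_N|/R$: the flux through $\Gamma_N$ is estimated via $\max_{\Gamma_N}|\na u|=\sqrt{W_R}\,\big|_{\Gamma_N}$ (the definition of $R(N)$), and the flux through the level sets approaching $\Sigma_N$ via Lemma~\ref{le:W_Wm}, $\lim_{t\to\umax^-}\int_{\{u=t\}\cap N}\sqrt{W/W_R}\,\rmd\sigma\geq|\Sigma_N|$ — one-sided bounds that go in the right direction. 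Rigidity then follows because equality forces the bulk integral to vanish, i.e.\ $W\equiv W_R$, after which the rigidity statement of Theorem~\ref{thm:min_pr_SD} applies. To salvage your normalised field you would have to prove $(\star)$, which is a genuinely new pointwise estimate (and, as noted, false in the presence of interior saddles), so the argument as it stands does not close.
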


\begin{proof} The first step of the proof consist in establishing a useful integral identity. To this purpose, let us set $N_\ep=N\cap\{0\leq u\leq \umax-\ep\}$, where the small enough parameter $\ep>0$ is chosen in such a way that the level set $u=\umax-\ep$ is regular. Sard's Theorem grants us that the regular values of $u$ are dense, and so we will then let $\ep$ go to zero along a sequence of regular values.
Using the Divergence Theorem and recalling that $\De u=-2$, we compute
\begin{align*}
\int_{N_\ep}\frac{-2}{\Psi^2-R^2}\,\rmd\mu\,=\,\int_{N_\ep} \frac{\De u}{\Psi^2-R^2}\,\rmd\mu\,&=\,-\int_{N_\ep}\left\langle\D\left(\frac{1}{\Psi^2-R^2}\right)\,\bigg|\,\D u\right\rangle\,\rmd\mu\,+\,\int_{\pa N_\ep}\frac{\langle \D u\,|\,\nu\rangle}{\Psi^2-R^2}\,\rmd\sigma
\\
&=\,-\int_{N_\ep}\frac{2\,\Psi^2}{(\Psi^2-R^2)^3}\,|\D u|^2\,\rmd\mu\,+\,\int_{\pa N_\ep}\frac{\langle \D u\,|\,\nu\rangle}{\Psi^2-R^2}\,\rmd\sigma
\end{align*}
where $\nu$ is the outward unit normal to $\pa N_\ep$ and $\Psi$ is the pseudo-radial function, defined as usual by either~\eqref{eq:pr_function_+} or~\eqref{eq:pr_function_-} depending on whether $\overline\tau(N)<\sqrt{2}$ or $\overline\tau(N)>\sqrt{2}$, respectively. Notice that $\pa N_\ep=\Gamma_N\cup\Sigma_{N,\ep}$, where $\Sigma_{N,\ep}=N\cap\{u=\umax-\ep\}$ is a compact hypersurface that gets closer to $\Sigma_N$ as $\ep$ goes to $0$. It is clear that $\nu=-\D u/|\D u|$ on $\Gamma_N$ and $\nu=\D u/|\D u|$ on $\Sigma_{N,\ep}$. Setting $W=|\D u|^2$ as usual, and recalling the expression~\eqref{eq:Wm} of $W_R$ in terms of $\Psi$, the identity above can be written as
\begin{equation}
\label{eq:crucial_mon}
\int_{N_\ep}\frac{2\,\Psi^2}{(\Psi^2-R^2)^3}\,(W-W_R)\,\rmd\mu\,=\,-\,\int_{\Gamma_N}\frac{|\D u|}{\Psi^2-R^2}\,\rmd\sigma\,+\,\int_{\Sigma_{N,\ep}}\frac{|\D u|}{\Psi^2-R^2}\,\rmd\sigma\,.
\end{equation}
Theorem~\ref{thm:min_pr_SD} tells us that the factor $(W-W_R)$ appearing in the right hand side of~\eqref{eq:crucial_mon} is nonpositive. Since $\Psi^2-R^2>0$ if $\overline\tau(\Gamma_N)<\sqrt{2}$ and $\Psi^2-R^2<0$ if $\overline\tau(\Gamma_N)>\sqrt{2}$, the left hand side of~\eqref{eq:crucial_mon} is nonpositive if $\overline\tau(\Gamma_N)<\sqrt{2}$ and it is nonnegative if $\overline\tau(\Gamma_N)>\sqrt{2}$. Furthermore, if the left hand side vanishes, then $W=W_R$ in the whole $N_\ep$, and the rigidity statement of Theorem~\ref{thm:min_pr_SD} forces $(\Omega,u)$ to be the ring-shaped model solution with core radius $R$. 

We now proceed with the analysis of the right hand side of~\eqref{eq:crucial_mon}. If $\overline\tau(\Gamma_N)<\sqrt{2}$, we have that $\Psi \equiv 1$ on $\Gamma_N$ and $\max_{\Gamma_N}|\D u|={\sqrt{W_R}}_{|_{\Gamma_N}}=1- R^2$, hence
$$
\int_{\Gamma_N} \frac{|\D u|}{\Psi^2-R^2}\,\rmd\sigma\,=\,\int_{\Gamma_N} \frac{|\D u|}{\max_{\Gamma_N}|\D u|}\,\rmd\sigma\,\leq\,|\Gamma_N|\,.
$$
If instead $\overline\tau(\Gamma_N)>\sqrt{2}$, we have that $\Psi=r_i(R)$ on $\Gamma_N$ and $\max_{\Gamma_N}|\D u|={\sqrt{W_R}}_{|_{\Gamma_N}}=(R^2-r_i^2)/r_i$, hence
$$
\int_{\Gamma_N} \left|\frac{\D u}{\Psi^2-R^2}\right|\,\rmd\sigma\,=\,\frac{1}{r_i}\int_{\Gamma_N} \frac{|\D u|}{\max_{\Gamma_N}|\D u|}\,\rmd\sigma\leq\frac{|\Gamma_N|}{r_i}
$$
Finally, we compute the limit of the integral on $\Sigma_{N,\ep}$, when $\ep \to 0$. As specified above, we understand that such a limit is taken along a sequence of regular values of $u$.
Recalling again the definitions of $W$ and $W_R$ and the fact that $\Psi^2-R^2$ is positive (respectively, negative) in $N$ if $\overline\tau(\Gamma_N)<\sqrt{2}$ (respectively, $\tau(\Gamma_N)>\sqrt{2}$), we can rewrite this limit as follows
$$
\lim_{\ep\to 0^+}\int_{\Sigma_{N,\ep}}\frac{|\D u|}{\Psi^2-R^2}\,\rmd\sigma\,=\,\lim_{\ep\to 0^+}\int_{\Sigma_{N,\ep}}\pm\frac{1}{\Psi}\sqrt{\frac{W}{W_R}}\,\rmd\sigma\,,\,=\,\pm\frac{1}{R}\lim_{\ep\to 0^+}\int_{\Sigma_\ep}\sqrt{\frac{W}{W_R}}\,\rmd\sigma\,,
$$
where the sign $\pm$ is $+$ if $\overline\tau(\Gamma_N)<\sqrt{2}$ and it is $-$ if $\overline\tau(\Gamma_N)>\sqrt{2}$.
We can then apply Lemma~\ref{le:W_Wm}, which tells us that the limit of this integral over $\Sigma_{N,\ep}$ is greater than or equal to the area of $\Sigma_N$.
The wished result is now easily obtained combining the latter observations with the integral identity~\eqref{eq:crucial_mon}.
\end{proof}

%%%%%%%%%%%%%%%%%%%%%%%%%%%%%%%%%%%%%%%%%%%%%%%%%%%%%%%%%%%

\subsection{Proof of Theorem~\ref{thm:const_grad}}
\label{sub:thmB}

We now have all the ingredients to prove Theorem~\ref{thm:const_grad}. We rewrite hereafter the statement, for the ease of the reader

\begin{theorem}
Let $(\Om,u)$ be a solution to problem~\eqref{eq:prob_SD} such that $\Om$ is a ring-shaped domain and $u$ has infinitely many maximum points. Assume that $|\nabla u|$ is constant on either the inner or the outer boundary component. Then, up to translations and rescaling, $(\Om, u)$ corresponds to a ring-shaped model solution~\eqref{eq:ST}.
%
%Let $(\Omega,u)$ be a solution to problem~\eqref{eq:prob_SD}, let $N$ be a connected component of $\Om\setminus{\rm MAX}(u)$. If $N$ is an {\color{blue}inner} region, $\Gamma_N$ is connected and $|\D u|$ is constant on $\Gamma_N$, then $(\Om,u)$ is rotationally symmetric.
\end{theorem}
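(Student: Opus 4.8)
The plan is to reduce to two cases according to whether $|\na u|$ is constant on the inner component $\Gamma_i$ or on the outer component $\Gamma_o$, feeding into the argument the Pohozaev dichotomy (Proposition~\ref{pro:poho_ST}) and the curvature/length bounds established in the previous sections. The common preliminary step is exactly the one carried out in the proof of Theorem~\ref{thm:B}: since $u$ has infinitely many maximum points, the {\L}ojasiewicz Structure Theorem, the regularity of the top stratum, and an elementary Strong Maximum Principle argument force ${\rm MAX}(u)$ to reduce to a single real analytic simple closed curve $\Sigma$ splitting the ring-shaped domain $\Om$ into an inner region $\Om_i$ and an outer region $\Om_o$, with $\pa\Om\cap\overline{\Om}_i=\Gamma_i$ and $\pa\Om\cap\overline{\Om}_o=\Gamma_o$. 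In particular $\Gamma_{\Om_i}=\Gamma_i$, $\Gamma_{\Om_o}=\Gamma_o$ and $\Sigma_{\Om_i}=\Sigma=\Sigma_{\Om_o}$, and all three length bounds of Subsection~\ref{sub:length} are available on each region.

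Suppose first that $|\na u|$ is constant on $\Gamma_i$. Since $\Om_i$ is an inner region with $\Gamma_{\Om_i}=\Gamma_i$, Proposition~\ref{pro:poho_ST} gives $\overline\tau(\Om_i)=\overline\tau(\Gamma_i)>\sqrt2$, so the expected core radius $R_i=R(\Om_i)$ lies in $(0,1)$. I would put $(\Om,u)$ in Normalization~\ref{norm} relative to $N=\Om_i$ and then simply concatenate the three estimates: Proposition~\ref{pro:area_bound_Sigma} (applicable, as $\overline\tau(\Om_i)\geq\sqrt2$ and $\Om_i$ is inner) gives $2\pi R_i\leq|\Sigma|$; Proposition~\ref{pro:area_lower_bound_mon} (applicable, as $\overline\tau(\Om_i)>\sqrt2$) gives $|\Sigma|/R_i\leq|\Gamma_i|/r_i(R_i)$; and Proposition~\ref{pro:area_bound_Gamma} (applicable, as $|\na u|$ is constant on $\Gamma_i$ and $\Om_i$ is inner) gives $|\Gamma_i|\leq2\pi r_i(R_i)$. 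Hence
\[
2\pi R_i\,\leq\,|\Sigma|\,\leq\,\frac{R_i}{r_i(R_i)}\,|\Gamma_i|\,\leq\,\frac{R_i}{r_i(R_i)}\,\cdot\,2\pi r_i(R_i)\,=\,2\pi R_i\,,
\]
so every inequality is an equality; in particular equality holds in Proposition~\ref{pro:area_lower_bound_mon}, whose rigidity clause identifies $(\Om,u)$, up to translations and rescaling, with the ring-shaped model solution~\eqref{eq:ST} of core radius $R_i$.

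Suppose now that $|\na u|$ is constant on $\Gamma_o$. Here the length chain degenerates for the outer region, so instead I would regard $\Om_o$ as a ring-shaped domain with outer boundary $\Gamma_o$ and inner boundary $\Sigma$ and invoke Theorem~\ref{thm:Syr}: on $\Om_o$ one has $\De u=-2$, $u=0$ on $\Gamma_o$, $u=\umax$ on $\Sigma$, $|\na u|$ is locally constant on $\pa\Om_o$ (constant on $\Gamma_o$ by hypothesis, identically zero on $\Sigma\subset{\rm MAX}(u)$), and $\pa u/\pa\nu\equiv0\geq0$ on $\Sigma$ because $\na u$ vanishes there. Thus Theorem~\ref{thm:Syr} yields that, up to translation and rescaling, $(\Om_o,u_{|\Om_o})$ is a portion of $(\Om_{R,o},{u_R}_{|\Om_{R,o}})$ for some $0<R<1$. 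Since $u$ vanishes only on $\Gamma_o$ and $\na u$ vanishes on $\Sigma$, matching these two conditions with the model forces that portion to be all of $\Om_{R,o}$, with $\Gamma_o=\{|x|=1\}$, $\Sigma=\{|x|=R\}$ a round circle, and $\umax=(u_R)_{\max}$. Finally, near $\Sigma$ the potential $u$ solves the analytic Cauchy problem~\eqref{eq:prob_on_Sigma}, which is also solved by the model solution $u_R$; as $\Sigma$ is noncharacteristic for the elliptic operator $\De$, the Cauchy-Kovalevskaja Theorem gives $u=u_R$ near $\Sigma$, and unique continuation (real analyticity of $u$ and $u_R$, connectedness of $\Om_i$) propagates the identity to all of $\Om$, so $(\Om,u)=(\Om_R,u_R)$.

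The hard part is the second case. The constancy hypothesis on $\Gamma_o$ gives information only about the outer region, where — unlike for the inner region — the three length bounds do not close up, and Proposition~\ref{pro:poho_ST} cannot be run on $\Om_i$ for lack of a Neumann datum there; this is precisely where the moving plane method must be borrowed through Theorem~\ref{thm:Syr}, the key point being that $\Sigma$, carrying $\na u\equiv0$, is an admissible inner boundary for it (the sign condition $\pa u/\pa\nu\geq0$ holds trivially). The remaining subtlety there is to upgrade Theorem~\ref{thm:Syr}'s conclusion from ``a portion of $(\Om_{R,o},{u_R}_{|\Om_{R,o}})$'' to the full outer model region, which is what legitimises the analytic continuation across $\Sigma$ and completes the identification of $(\Om,u)$ with a ring-shaped model solution.
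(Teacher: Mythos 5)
Your proposal is correct and follows essentially the same route as the paper: the same reduction of ${\rm MAX}(u)$ to a single simple closed curve $\Sigma$, the same chain $2\pi R_i\leq|\Sigma|\leq (R_i/r_i)|\Gamma_i|\leq 2\pi R_i$ via Propositions~\ref{pro:area_bound_Sigma}, \ref{pro:area_lower_bound_mon}, \ref{pro:area_bound_Gamma} with rigidity from Proposition~\ref{pro:area_lower_bound_mon} in the inner case, and the same appeal to Theorem~\ref{thm:Syr} on $\Om_o$ in the outer case. The only difference is that you spell out the final analytic-continuation step (upgrading ``a portion of $\Om_{R,o}$'' and propagating across $\Sigma$ via Cauchy--Kovalevskaja), which the paper compresses into ``by analyticity''; this is a welcome elaboration, not a different argument.
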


\begin{proof}
First observe that, arguing as in the proof of Theorem~\ref{thm:B}, one gets that the $1$-dimensional top stratum of ${\rm MAX}(u)$ is necessarily nonempty and given by a smooth simple closed curve, that we denote by $\Sigma$.
As in the statement of Theorem~\ref{thm:curv_bound_Sigma}, 
$\Sigma \subseteq{\rm MAX}(u)$ is separating $\Omega$ into two regions: one inner region $\Om_i$ and one outer region $\Om_o$, with $\pa \Om \cap \overline{\Om}_i = \Gamma_i$ and $\pa \Om \cap \overline{\Om}_o = \Gamma_o$.

Let us first assume that $|\na u|$ is constant on the outer boundary component $\Gamma_o$. In this case, we have that $(\Om_o,u)$ obeys 
\begin{equation*}
\begin{dcases}
\Delta u=-2 & \quad\mbox{in } \Om_o \, ,\\
\,\,\,\,\, u=0 & \quad\mbox{on } \Gamma_o \, ,\\
\,\,\,\,\, u=\umax & \quad\mbox{on } \Sigma\, .\\
\end{dcases}
\end{equation*}
Moreover, $|\na u|$ is locally constant on $\pa \Om_o= \Gamma_o \sqcup \Sigma$, and in particular 
\begin{equation*}
\frac{\pa u}{\pa\nu} \, = \, 0 \, \qquad \hbox{on
$\Sigma$} \, , 
\end{equation*}
where $\nu$ is the outer unit normal to $\Sigma$. In other words, the assumptions of Theorem~\ref{thm:Syr} are sharply satisfied. It follows that, up to translations and rescalings, the solution $(\Om_o, u)$ corresponds to (a portion of) a ring-shaped model solution, and so does $(\Om, u)$, by analyticity.

Let us now assume that $|\na u|$ is constant on the inner boundary component $\Gamma_i$. In this case, we necessarily have that $\overline\tau(\Gamma_i)>\sqrt{2}$, in virtue of Proposition~\ref{pro:poho_ST}. We can then employ the length bounds of the previous subsection, and more precisely Propositions~\ref{pro:area_bound_Gamma},~\ref{pro:area_bound_Sigma} and~\ref{pro:area_lower_bound_mon} with $N= \Om_i$ and $\Gamma_N = \Gamma_i$, to obtain the following chain of inequalities:
\begin{equation*}
2\pi \, \leq \, \frac{|\Sigma|}{R} \, \leq \, \frac{|\Gamma_i|}{r_i} \, \leq \, 2\pi \,,
\end{equation*}
where $R$ is the expected core radius of $\Om_i$, and $r_i=r_i(R)$.
It follows that all the above inequalities are saturated. In particular it holds that 
$$
\frac{|\Sigma|}{R} \, = \, \frac{|\Gamma_i|}{r_i} \, , 
$$
so that we can invoke the rigidity statement of Proposition~\ref{pro:area_lower_bound_mon} to conclude.
\end{proof}

%%%%%%%%%%%%%%%%%%%%%%%%
%%%%%%%%%%%%%%%%%%%%%%%%
%%%%%%%%%%%%%%%%%%%%%%%%
\section{Proof of Theorem~\ref{thm:bifurcation}}
\label{Sec:bif}
%%%%%%%%%%%%%%%%%%%%%%%%
%%%%%%%%%%%%%%%%%%%%%%%%
%%%%%%%%%%%%%%%%%%%%%%%%

This section is dedicated to the proof of Theorem~\ref{thm:bifurcation}, that, we recall, tells us that there are infinite solutions $(\Omega,u)$ to problem~\eqref{eq:prob_SD} that are not rotationally symmetric and such that $\Omega$ is a ring-shaped domain and $|\D u|$ is locally constant on $\pa \Omega$. 
In fact, we will be able to prove a far more precise statement, namely Theorem~\ref{thm:A_general}, that will give us a better picture of such exotic solutions. 
The proof is a modification of the arguments given in~\cite{Kam_Sci}, where the same result is proven for a similar problem. It will however be quite clear from our proof that there are several technical complications that make the computations much more delicate in our case. In an attempt to keep the presentation as clear-cut as possible, we will try to refer to~\cite{Kam_Sci} whenever possible, stressing only the main differences. To this aim, in order to have a notation as similar as possible to the one in~\cite{Kam_Sci}, we will use the notation $\lambda=r_i(R)\in(0,1)$ and we will consider $R$ as a function of $\lambda$. Notice in fact that $\lambda$, $R$ are related by
\begin{equation}
\label{eq:rel_lambda_m}
1-\lambda^2+2R^2\log\lambda \,=\,0\,,\quad \hbox{ or equivalently }\quad R^2\,=\,\frac{1-\lambda^2}{-2\,\log\lambda}\,.
\end{equation}
For any $\lambda\in(0,1)$, we denote by
$$
\Om_\lambda \,=\, \{\lambda\,<\,|x|\,<\,1\}\,,\quad u_\lambda\,=\,\frac{1-|x|^2}{2}\,+\,R^2\,\log |x|
$$
the rotationally symmetric solution with core radius $R$, by $\Gamma_{\lambda}=\{|x|=\lambda\}$, $\Gamma_{1}=\{|x|=1\}$ the two connected components of $\pa\Omega$, and by $c_\lambda^i=R^2/\lambda - \lambda$, $c_\lambda^o=1-R^2$ the constant value of $|\D u_\lambda|$ on $\Gamma_{\lambda}$ and $\Gamma_1$, respectively.

Let now $\alpha\in(0,1)$ and ${\bf v}=(v_1,v_2)\in\left(\mathscr{C}^{2,\alpha}(\Sph^1)\right)^2$, consider the domain
$$
\Om_\lambda^{\bf v} \,=\, \left\{\lambda\,+\,v_1(x/|x|)\,<\,|x|\,<\,1 \,-\, v_2(x/|x|) \right\}
$$
and let $u_\lambda^{\bf v}:\Om_\lambda^{\bf v}\to\R$ be the solution to the problem

\begin{equation}
\label{eq:prob_v}
\begin{dcases}
\Delta u=-2, & \mbox{in } \Om_\lambda^{\bf v}\,,\\
u=0, & \mbox{on } \partial \Om_\lambda^{\bf v}\,.
\end{dcases}
\end{equation}

Let $\Gamma_{\lambda}^{\bf v}$ and $\Gamma_{1}^{\bf v}$ be the interior and exterior connected component of $\pa\Om_\lambda^{\bf v}$, respectively.
We can now state the main result of this section.

\begin{theorem}
\label{thm:A_general}
Let $\alpha\in (0,1)$. There is a strictly increasing sequence $\{\lambda_\sigma\}_{\sigma=1}^\infty$ of positive real numbers with $\lim_{\sigma\to+\infty}\lambda_\sigma=1$, such that, for every $\sigma\in\N$, there exists $\ep>0$ and a smooth curve
\begin{align*}
(-\ep,\ep)&\longrightarrow\left(\mathscr{C}^{2,\alpha}(\Sph^1)\right)^2\times (0,1)
\\
s\ \ \ &\longmapsto\ \ \ \ \ \ \ \ \ ({\bf v}(s)\,,\,\lambda(s))
\end{align*}
with ${\bf v}(0)=(0,0)$, $\lambda(0)=\lambda_{\sigma}$, such that, for every $s\in(-\ep,\ep)$, in the notations introduced above, the solution $u_{\lambda(s)}^{{\bf v}(s)}$ to~\eqref{eq:prob_v} in $\Omega_{\lambda(s)}^{{\bf v}(s)}$ satisfies
\begin{equation*}
\,\big|\D u_{\lambda(s)}^{{\bf v}(s)}\big|=c_{\lambda(s)}^i \quad\mbox{on }\Gamma_{\lambda(s)}^{{\bf v}(s)},\qquad\quad
\,\big|\D u_{\lambda(s)}^{{\bf v}(s)}\big|=c_{\lambda(s)}^o \quad\mbox{on }\Gamma_1^{{\bf v}(s)}\,,
\end{equation*}
and $\Omega_{\lambda(s)}^{{\bf v}(s)}$ is not rotationally symmetric for any $s\neq 0$.
\end{theorem}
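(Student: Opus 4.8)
The proof follows the by-now classical bifurcation scheme \aaa la Crandall--Rabinowitz~\cite{Cra_Rab}, implemented as in~\cite{Kam_Sci}, with the rotationally symmetric family $(\Omega_\lambda,u_\lambda)$ playing the role of the trivial branch. First I would fix an integer $k\geq 2$ and set up the functional-analytic framework inside the Banach spaces of boundary perturbations $\mathbf{v}=(v_1,v_2)\in(\mathscr{C}^{2,\alpha}(\Sph^1))^2$ that are mean-zero and invariant under the group generated by the rotation of angle $2\pi/k$ and by a reflection, together with their images in $(\mathscr{C}^{1,\alpha}(\Sph^1))^2$; this symmetry restriction removes the trivial kernel directions coming from translations (the mode-$1$ part) and dilations, and it forces the kernel produced below to be one-dimensional. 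For $\mathbf{v}$ small, $\Omega_\lambda^{\bf v}$ is a ring-shaped domain, problem~\eqref{eq:prob_v} has a unique solution $u_\lambda^{\bf v}$, and, after pulling everything back to the fixed annulus $\Omega_\lambda$ by a $\mathbf{v}$-dependent diffeomorphism and invoking Schauder estimates, the map
\[
\mathcal{F}(\mathbf{v},\lambda)\,=\,\Big(\,\big|\nabla u_\lambda^{\bf v}\big|\big((\lambda+v_1(\theta))\,e^{i\theta}\big)-c_\lambda^i\,,\ \big|\nabla u_\lambda^{\bf v}\big|\big((1-v_2(\theta))\,e^{i\theta}\big)-c_\lambda^o\,\Big)
\]
is real-analytic from a neighbourhood of $0$ in the domain space, times $(0,1)$, into the target space, and satisfies $\mathcal{F}(0,\lambda)\equiv(0,0)$. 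Zeros of $\mathcal{F}$ with $\mathbf{v}\neq 0$ are precisely the exotic solutions we are after.

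The heart of the matter is the linearisation $\mathcal{L}_\lambda:=D_{\mathbf{v}}\mathcal{F}(0,\lambda)$. Differentiating the solution map, the shape derivative $\dot u=\dot u[\mathbf{w}]$ is harmonic in $\Omega_\lambda$ with $\dot u=-c_\lambda^i\,w_1$ on $\Gamma_\lambda$ and $\dot u=-c_\lambda^o\,w_2$ on $\Gamma_1$; decomposing each $w_j$ into Fourier modes $m\in k\mathbb{N}$, the mode-$m$ part of $\dot u$ is the explicit combination $(A_m r^m+B_m r^{-m})\cos(m\theta)$, with $A_m,B_m$ determined by the two boundary conditions. Feeding this into the Hadamard-type formula for the derivative of $|\nabla u|$ along a moving level set — which contributes $\partial_\nu\dot u$ together with curvature and $\Delta u$ correction terms of the reference boundary — yields, mode by mode, a $2\times 2$ matrix $M_m(\lambda)$ acting on the pair of mode-$m$ coefficients of $(v_1,v_2)$; here the extra $R^2\log|x|$ term in $u_\lambda$, the relation $R=R(\lambda)$ of~\eqref{eq:rel_lambda_m}, and the explicit values $c_\lambda^i=R^2/\lambda-\lambda$, $c_\lambda^o=1-R^2$ all come into play. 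A bifurcation value for the mode $k$ is then a $\lambda\in(0,1)$ with $\det M_k(\lambda)=0$.

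Next I would study the scalar function $\lambda\mapsto\det M_k(\lambda)$. By inspecting its behaviour as $\lambda\to 0^+$ and $\lambda\to1^-$, combined with the intermediate value theorem and a sign/monotonicity argument, one shows that for every sufficiently large $k$ there is a value $\lambda_\sigma\in(0,1)$ at which $\det M_k$ vanishes, at which $M_k(\lambda_\sigma)\neq 0$ (so that, checking in addition that $M_m(\lambda_\sigma)$ is invertible for all $m\in k\mathbb{N}$ with $m>k$, the space $\ker\mathcal{L}_{\lambda_\sigma}$ within the chosen symmetric spaces is one-dimensional, spanned by some $\mathbf{w}_0=(a_1\cos k\theta,a_2\cos k\theta)$ with $(a_1,a_2)\neq(0,0)$), and at which the Crandall--Rabinowitz transversality condition holds, namely $\partial_\lambda\mathcal{L}_\lambda|_{\lambda_\sigma}[\mathbf{w}_0]$ does not lie in the range of $\mathcal{L}_{\lambda_\sigma}$ — in the $2\times 2$ picture this is simply $\tfrac{d}{d\lambda}\det M_k(\lambda_\sigma)\neq 0$. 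One also shows $\lambda_\sigma\to 1$ as $k\to\infty$, so that after reindexing these values form a strictly increasing sequence converging to $1$. Since $\mathcal{L}_{\lambda_\sigma}$ is Fredholm of index $0$ (a compact perturbation of an isomorphism) with one-dimensional kernel and cokernel, the Crandall--Rabinowitz theorem applies and produces, near $(0,\lambda_\sigma)$, a smooth (indeed analytic) curve $s\mapsto(\mathbf{v}(s),\lambda(s))$ of zeros of $\mathcal{F}$ with $(\mathbf{v}(0),\lambda(0))=(0,\lambda_\sigma)$, $\mathbf{v}'(0)=\mathbf{w}_0$, and $\mathbf{v}(s)\neq 0$ for $0<|s|<\ep$. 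Finally, since $\mathbf{v}(s)=s\,\mathbf{w}_0+o(s)$ with $\mathbf{w}_0$ a nonzero pure mode-$k$ perturbation and $k\geq 2$, at least one boundary component of $\Omega_{\lambda(s)}^{{\bf v}(s)}$ fails to be a circle for small $s\neq 0$, hence the domain is not rotationally symmetric; this completes the proof.

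The step I expect to be the main obstacle is precisely the explicit determination of $M_k(\lambda)$ and the verification of the Crandall--Rabinowitz hypotheses for it. In contrast with~\cite{Kam_Sci}, where the reference solution and its normal derivative are elementary, here the logarithmic term in $u_\lambda$, the transcendental dependence $R=R(\lambda)$, and the fact that one controls the full gradient magnitude on a moving boundary (hence genuine curvature corrections) conspire to make the entries of $M_k(\lambda)$ — and thus the existence, simplicity, and transversality of the zeros of $\det M_k$ — a delicate computation; in particular, establishing the accumulation $\lambda_\sigma\to 1$ requires a careful asymptotic expansion of those entries as $k\to\infty$.
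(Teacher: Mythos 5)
Your outline follows essentially the same route as the paper: a Crandall--Rabinowitz bifurcation analysis along the trivial annular branch $(\Om_\lambda,u_\lambda)$, reduction of the linearised Neumann-defect operator to a $2\times 2$ matrix on each Fourier mode, restriction to a symmetry-invariant subspace to make the kernel simple, transversality read off from the $\lambda$-derivative at the degenerate value, and non-symmetry of the bifurcating domains from the nontrivial mode-$k$ leading term of ${\bf v}(s)$. However, the proposal stops exactly where the paper's actual work begins, and you flag this yourself. Concretely: (1) the existence of a zero of $\mu_1(\cdot,k)$ (equivalently of $\det M_k$) does follow from an intermediate value argument once one computes $\lim_{\lambda\to 0}\mu_1(\lambda,k)=k-1>0$ and $\lim_{\lambda\to 1}\mu_1(\lambda,k)=-2$, but the \emph{uniqueness} of $\lambda_k$ and the transversality $\tfrac{d}{d\lambda}\det M_k(\lambda_k)\neq 0$ are not obtainable by a generic ``sign/monotonicity argument'': full monotonicity of $\mu_1$ in $\lambda$ is apparently true but not proven even in the paper, and one has to settle for showing $\pa\mu_1/\pa\lambda<0$ \emph{at the zeros} of $\mu_1$, which is a genuinely delicate computation (Lemma~\ref{le:monotonicity_lambda}, using the explicit trace and determinant~\eqref{eq:TD}, the discriminant bound~\eqref{eq:delta_estimate}, and a chain of inequalities culminating in~\eqref{eq:cond5} that exploits $R_k>\lambda_k$ and a lower bound on $k\coth\omega_k$). (2) Simplicity of the kernel across the higher harmonics $2k,3k,\dots$ requires knowing that $\mu_1,\mu_2$ are strictly increasing in $k$; this is Lemma~\ref{le:monotonicity_k}, proved by symmetrising $M_{\lambda,k}$ and showing $\pa_k M^S_{\lambda,k}$ is positive definite, and it is also what gives the monotonicity of $\{\lambda_k\}$ without any reindexing. (3) The Fredholm/closed-range property is not a freebie: your parenthetical ``compact perturbation of an isomorphism'' is unsubstantiated (the principal part is a Dirichlet-to-Neumann type operator, not an isomorphism off the shelf), whereas the paper constructs an explicit bounded right inverse on the orthogonal complement of ${\bf z}_{k,1}$ using the linear-in-$k$ growth of the eigenvalues, Proposition~\ref{pro:main_bifurcation}-$(v)$.

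Two smaller points. Your mean-zero restriction on the perturbations is not legitimate as stated, because the nonlinear map $\mathcal{F}$ does not send mean-zero data to mean-zero data; the paper instead keeps the mode-$0$ block in both domain and target and disposes of it by checking that its eigenvalues are strictly negative (Proposition~\ref{pro:main_bifurcation}-$(i)$), which is the verification your scheme would still need in some form. Finally, note that the mode-by-mode matrix must be expressed in a basis orthonormal for the weighted product~\eqref{eq:scal_prod} (or symmetrised as in Lemma~\ref{le:monotonicity_k}) before eigenvalue monotonicity arguments make sense; with these ingredients supplied, your skeleton coincides with the paper's proof.
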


It is clear that this result implies Theorem~\ref{thm:bifurcation} at once.
The rest of the section is therefore dedicated to the proof of Theorem~\ref{thm:A_general}.
Let $U\subseteq \left(\mathscr{C}^{2,\alpha}(\Sph^1)\right)^2$ be a small neighborhood of ${\bf 0} = (0,0)$, and define the function
\begin{equation}
\label{eq:F}
\begin{aligned}
F_\lambda:\, U &\longrightarrow \left(\mathscr{C}^{1,\alpha}(\Sph^1)\right)^2
\\
{\bf v} &\longmapsto
\left(\frac{\pa u_\lambda^{\bf v}}{\pa\nu}_{|_{\Gamma_{\lambda}^{\bf v}}}\,-\,c_\lambda^i\ ,\ \frac{\pa u_\lambda^{\bf v}}{\pa\nu}_{|_{\Gamma_{1}^{\bf v}}}\,-\,c_\lambda^o \right) \,,
\end{aligned}
\end{equation}
where $\nu$ is the inward unit normal to $\pa\Omega_\lambda^{\bf v}$.

Our aim is that of {\em finding ${\bf v}$ that is not trivial (meaning that ${\bf v}\neq {\bf 0}$ and it is not a simple translation) and such that $F_\lambda({\bf v})={\bf 0}$}.
We start by linearizing $F_\lambda$, that is, we consider the function
$$
L_\lambda({\bf w})\,=\,\lim_{t\to 0}\frac{F_\lambda(t\,{\bf w})}{t}\,.
$$
Proceeding in the same way as in~\cite[Proposition~3.1]{Kam_Sci}, we obtain the following expression for $L_\lambda$:
$$
L_\lambda({\bf w})\,=\,
\left(w_1\,\frac{\pa^2 u_\lambda}{\pa r^2}_{|_{\Gamma_{\lambda}}}\!\!\!-\,\frac{\pa \phi_\lambda^{\bf w}}{\pa\nu}_{|_{\Gamma_{\lambda}}} ,\ w_2\,\frac{\pa^2 u_\lambda}{\pa r^2}_{|_{\Gamma_1}}\!\!\!-\,\frac{\pa \phi_\lambda^{\bf w}}{\pa\nu}_{|_{\Gamma_1}} \right)\,,
$$
where $\phi_\lambda^{{\bf w}}$ is the solution to
$$
\begin{dcases}
\Delta \phi=0, & \mbox{in } \Om_\lambda\,,\\
\phi=c_\lambda^i \,w_1, & \mbox{on } \Gamma_\lambda\,,\\
\phi=c_\lambda^o\,w_2, & \mbox{on } \Gamma_1\,.
\end{dcases}
$$
In order to obtain a more explicit formula for $L_\lambda$, it is convenient to restrict the attention to spherical harmonics. For a given integer $k\in\N_0$, let then $Y\in\mathscr{C}^\infty(\Sph^1)$, be a nontrivial solution to 
$$
\De_{\Sph^1}Y \,+\, k^2\,Y = 0 \,,
$$ 
and let $W\subseteq \left(\mathscr{C}^{\infty}(\Sph^1)\right)^2$ be the subspace generated by $(Y,0)$ and $(0,Y)$. Finally, fixed $\lambda\in(0,1)$, let
$$
{\bf e}_1\,=\,\left( \frac{1}{\sqrt{\lambda}}\,Y\,,\,0\right)\,,\quad {\bf e}_2\,=\,\left(0\,,\, Y\right)
$$
be a base of $W$, orthonormal with respect to the scalar product
\begin{equation}
\label{eq:scal_prod} 
\langle {\bf w},{\bf z}\rangle_\lambda\,=\,\lambda\int_{\Sph^1} w_1 \,z_1\,d\sigma\,+\,\int_{\Sph^1} w_2\,z_2\,d\sigma\,.
\end{equation}
For any element ${\bf w}=a\, {\bf e}_1 + b\, {\bf e}_2\in W$, one can compute
$$
\phi_\lambda^{\bf w}=
\begin{dcases}
\left(a\,\frac{c_\lambda^i}{\sqrt{\lambda}}\,\frac{|x|^k - |x|^{-k}}{\lambda^k - \lambda^{-k}}\,+\,b\,c_\lambda^o\,\frac{\lambda^{k}|x|^{-k} - \lambda^{-k} |x|^{k}}{\lambda^k - \lambda^{-k}}\right)Y   &   \hbox{ if } k>0\,,
\\
\left(a\,\frac{c_\lambda^i}{\sqrt{\lambda}}\,\frac{\log|x|}{\log\lambda}\,+\,b\,c_\lambda^o\,\frac{\log\lambda-\log|x|}{\log\lambda}\right)Y   &   \hbox{ if } k=0\,,
\end{dcases}
$$
In particular, ${\rm Im}({L_\lambda}_{|_{W}})\subseteq W$ and the matrix associated to the restriction ${L_\lambda}_{|_{W}}$ with respect to the basis ${\bf e}_1$, ${\bf e}_2$ can be computed as
$$
M_{\lambda,k}\,=\,\widetilde{M}_{\lambda,k}
\,-\,2\,{\rm Id}\,,\quad \hbox{where } 
\widetilde{M}_{\lambda,k}\,=\,
\begin{dcases}
\begin{bmatrix}
\frac{R^2-\lambda^2}{\lambda^2}(k\coth\omega -1) & -\frac{k}{\sqrt{\lambda}}(1-R^2)\frac{1}{\sinh\omega} \\
-\frac{k}{\sqrt{\lambda}}\frac{R^2-\lambda^2}{\lambda}\frac{1}{\sinh\omega} & (1-R^2)(k\coth\omega +1)
\end{bmatrix}   &   \hbox{ if } k>0\,,
\\
\begin{bmatrix}
\frac{R^2-\lambda^2}{\lambda^2}(-\frac{1}{\log\lambda} -1) & \frac{1}{\sqrt{\lambda}}(1-R^2)\frac{1}{\log\lambda} \\
\frac{1}{\sqrt{\lambda}}\frac{R^2-\lambda^2}{\lambda}\frac{1}{\log\lambda} & (1-R^2)(-\frac{1}{\log\lambda} +1)
\end{bmatrix}    &   \hbox{ if } k=0\,.
\end{dcases}
$$
Here we have denoted by $\omega$ the function satisfying $e^\omega=\lambda^{-k}$. While we are of course interested only to integer values of $k$, the matrix $M_{\lambda,k}$ makes sense for any real value of $k\geq 0$. Notice that $M_{\lambda,k}$ is analytic in both variables $(\lambda,k)\in(0,1)\times(0,+\infty)$, and it can be checked easily that $\lim_{k\to 0^+} M_{\lambda,k}=M_{\lambda,0}$, which implies that $M_{\lambda,k}$ is continuous up to $(\lambda,k)\in(0,1)\times\{0\}$.

If we denote by 
\begin{equation}
\label{eq:TD}
\begin{aligned}
T_{\lambda,k}\,&=\,R^2\left(\frac{1}{\lambda^2}-1\right) \,k\, \coth\omega\,+\,2\,-\,R^2\,-\,\frac{R^2}{\lambda^2}\,.
\\
D_{\lambda,k}\,&=\,\left(\frac{R^2}{\lambda^2}-1\right)(1-R^2)(k^2-1)
\end{aligned}
\end{equation} 
the trace and determinant of $\widetilde{M}_{\lambda,k}$, following a computation that is completely analogous to the one leading to estimate~(4.17) in~\cite{Kam_Sci}, we obtain
\begin{equation}
\label{eq:delta_estimate}
T_{\lambda,k}^2-4D_{\lambda,k}\,=\,\left\{k\left(\frac{c_\lambda^i}{\lambda}-c_\lambda^o\right)\coth\omega\,+\,\left(\frac{c_\lambda^i}{\lambda}+c_\lambda^o\right)\right\}^2\,+\,4\,k^2\,\frac{c_\lambda^i c_\lambda^o}{\lambda}\frac{1}{\sinh^2\omega}>0\,.
\end{equation}

As a consequence, the eigenvalues of $M_{\lambda,k}$, given by
\begin{equation}
\label{eq:eigenv_expr}
\mu_1(\lambda,k)\,=\,\frac{T_{\lambda,k}-\sqrt{T_{\lambda,k}^2-4D_{\lambda,k}}}{2}\,-\,2\,,\quad \mu_2(\lambda,k)\,=\,\frac{T_{\lambda,k}+\sqrt{T_{\lambda,k}^2-4D_{\lambda,k}}}{2}\,-\,2\,,
\end{equation}
are distinct real numbers. Furthermore, $\mu_1(\lambda,k)$ and $\mu_2(\lambda,k)$ have the same regularity as $M_{\lambda,k}$, namely they are analytic for $(\lambda,k)\in(0,1)\times(0,+\infty)$ and continuous up to $(\lambda,k)\in(0,1)\times\{0\}$.

For $k=1$, we can easily compute them explicitly:
$$
\mu_1(\lambda,1)\,=\,-2\,,\qquad \mu_2(\lambda,1)\,=\,0\,.
$$
Another simple computation shows that for any $k> 1$ it holds
$$
\lim_{\lambda\to 1} \mu_1(\lambda,k)\,=\,-2\,,\qquad \lim_{\lambda\to 1}\mu_2(\lambda,k)\,=\,0\,.
$$
Concerning the limit when $\lambda\to 0$, we first observe that, from the relation~\eqref{eq:rel_lambda_m} between $R$ and $\lambda$, it easily follows that $R/\lambda$ diverges to $+\infty$ as $\lambda\to 0$. Recalling the explicit expressions~\eqref{eq:TD} of $T_{\lambda,k}$ and $D_{\lambda,k}$, for any $k>1$, we easily obtain the following behavior for $\lambda$ close to zero:
$$
T_{\lambda,k}\,=\,\frac{R^2}{\lambda^2}(k-1)\,+\,o(R^2/\lambda^2)\,,\qquad D_{\lambda,k}\,=\,\frac{R^2}{\lambda^2}(k^2-1)\,+\,o(R^2/\lambda^2)
$$
As a consequence, both $T_{\lambda,k}$ and $D_{\lambda,k}$ diverge to $+\infty$ as $\lambda\to 0$, so that in particular $\mu_2(\lambda,k)\to +\infty$ when $\lambda\to 0$. Concerning the first eigenvalue, with some easy computations we find:

\begin{align*}
\lim_{\lambda\to 0}\mu_1(\lambda,k)\,&=\,\frac{1}{2}\lim_{\lambda\to 0}\left\{T_{\lambda,k}\left[1-\sqrt{1-4\frac{D_{\lambda,k}}{T^2_{\lambda,k}}}\right]\right\}-2
\\
&=\,\frac{1}{2}\lim_{\lambda\to 0}\left\{T_{\lambda,k}\frac{4\frac{D_{\lambda,k}}{T^2_{\lambda,k}}}{1+\sqrt{1-4\frac{D_{\lambda,k}}{T^2_{\lambda,k}}}}\right\}-2
\\
&=\,\lim_{\lambda\to 0} \frac{D_{\lambda,k}}{T_{\lambda,k}}-2\,=\,\frac{k^2-1}{k-1}-2\,=\,k-1>0\,.
\end{align*}
In particular notice that the limits of $\mu_1(\lambda,k)$ as $\lambda\to 0$ and $\lambda\to 1$ have different sign, from which it follows that, for any positive $k\in\N$, {\em there is at least one value $\lambda_k$ such that $\mu_1(\lambda_k,k)=0$}. We will come back to this point later, in Proposition~\ref{pro:main_bifurcation}, where we will show that such points possess a number of crucial properties. Before stating that proposition and dealing with its proof, we need a couple of preparatory results, concerning the monotonicity of the eigenvalues with respect to $\lambda$ and $k$. 

\begin{lemma}
\label{le:monotonicity_k}
The eigenvalues $\mu_{1}(\lambda,k)$, $\mu_{2}(\lambda,k)$ are monotonically increasing in $k$.
\end{lemma}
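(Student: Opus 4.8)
\emph{Proof plan.} The plan is to reduce the statement to a Weyl-type eigenvalue monotonicity for an auxiliary \emph{symmetric} matrix obtained from $\widetilde{M}_{\lambda,k}$ by a conjugation that does not depend on $k$. Since $M_{\lambda,k}=\widetilde{M}_{\lambda,k}-2\,{\rm Id}$, we have $\mu_j(\lambda,k)=\nu_j(\lambda,k)-2$, where $\nu_1(\lambda,k)\le\nu_2(\lambda,k)$ denote the eigenvalues of $\widetilde{M}_{\lambda,k}$; hence it suffices to prove that each $\nu_j$ is strictly increasing in $k$. The matrix $\widetilde M_{\lambda,k}$ is not symmetric, but the crucial observation is that the ratio of its off-diagonal entries,
$$
\frac{(\widetilde M_{\lambda,k})_{21}}{(\widetilde M_{\lambda,k})_{12}}\,=\,\frac{(R^2-\lambda^2)/\lambda}{1-R^2}\,=\,\frac{c_\lambda^i}{c_\lambda^o}\,,
$$
is positive (recall $\lambda=r_i(R)<R<1$ by~\eqref{eq:ST},~\eqref{eq:rel_lambda_m}, so $R^2-\lambda^2>0$ and $1-R^2>0$) and independent of $k$. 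Thus, conjugating by the fixed diagonal matrix $S_\lambda={\rm diag}(s_1,s_2)$ with $(s_1/s_2)^2=c_\lambda^i/c_\lambda^o$, one turns $\widetilde M_{\lambda,k}$ into a symmetric matrix $\widehat M_{\lambda,k}:=S_\lambda\widetilde M_{\lambda,k}S_\lambda^{-1}$ having the same eigenvalues $\nu_1,\nu_2$. Writing $A=(R^2-\lambda^2)/\lambda^2>0$, $C=1-R^2>0$ and $\omega=-k\log\lambda>0$ as in the statement, a direct computation of the product of the two off-diagonal entries of $\widetilde M_{\lambda,k}$ gives
$$
\widehat M_{\lambda,k}\,=\,\begin{pmatrix} A\,(k\coth\omega-1) & -\sqrt{AC}\,\dfrac{k}{\sinh\omega}\\[2mm] -\sqrt{AC}\,\dfrac{k}{\sinh\omega} & C\,(k\coth\omega+1)\end{pmatrix}\,.
$$

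By the min–max characterisation of the eigenvalues of a symmetric matrix, it is enough to show that $\partial_k\widehat M_{\lambda,k}$ is positive definite for every $k>0$: indeed, for any fixed unit vector $v$ the map $k\mapsto\langle\widehat M_{\lambda,k}v,v\rangle$ is then strictly increasing, whence so are $\nu_1(\lambda,k)=\min_{|v|=1}\langle\widehat M_{\lambda,k}v,v\rangle$ and $\nu_2(\lambda,k)=\max_{|v|=1}\langle\widehat M_{\lambda,k}v,v\rangle$, and the case $k=0$ follows by continuity. Setting $p:=\partial_k(k\coth\omega)$ and $q:=\partial_k(k/\sinh\omega)$, and using $\partial_k\omega=-\log\lambda>0$, we get
$$
\partial_k\widehat M_{\lambda,k}\,=\,\begin{pmatrix} A\,p & -\sqrt{AC}\,q\\[1mm] -\sqrt{AC}\,q & C\,p\end{pmatrix}\,,
$$
whose trace is $(A+C)p$ and whose determinant is $AC\,(p^2-q^2)=AC\,(p-q)(p+q)$. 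Therefore $\partial_k\widehat M_{\lambda,k}\succ0$ will follow once we check $p>0$, $q<0$ and $p+q>0$.

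These three facts are elementary consequences of standard inequalities for hyperbolic functions. One computes
$$
p\,=\,\coth\omega-\frac{\omega}{\sinh^2\omega}\,=\,\frac{\sinh\omega\cosh\omega-\omega}{\sinh^2\omega}\,,\qquad q\,=\,\frac{\sinh\omega-\omega\cosh\omega}{\sinh^2\omega}\,,
$$
so that $p>0$ because $\sinh(2\omega)>2\omega$ for $\omega>0$, and $q<0$ because $\omega\cosh\omega>\sinh\omega$ for $\omega>0$; finally, grouping terms,
$$
p+q\,=\,\frac{\sinh\omega(\cosh\omega+1)-\omega(\cosh\omega+1)}{\sinh^2\omega}\,=\,\frac{(\cosh\omega+1)(\sinh\omega-\omega)}{\sinh^2\omega}\,>\,0\,,
$$
since $\cosh\omega+1>0$ and $\sinh\omega>\omega$ for $\omega>0$. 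This yields $\partial_k\widehat M_{\lambda,k}\succ0$, hence the strict monotonicity in $k$ of $\nu_1,\nu_2$, and therefore of $\mu_1(\lambda,k)=\nu_1-2$ and $\mu_2(\lambda,k)=\nu_2-2$.

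The only point requiring care is the structural observation that the off-diagonal ratio of $\widetilde M_{\lambda,k}$ does not depend on $k$, which is exactly what makes the symmetrising conjugation $S_\lambda$ uniform in $k$ and thus lets us apply Weyl monotonicity; once this is in place, the proof reduces to the three scalar inequalities $p>0$, $q<0$, $p+q>0$ recorded above.
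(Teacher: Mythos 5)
Your proof is correct and follows essentially the same route as the paper: your conjugated matrix $\widehat M_{\lambda,k}$ is exactly the symmetric matrix $\widetilde{M}^S_{\lambda,k}$ used there, and the criterion you verify (positive definiteness of $\pa_k \widehat M_{\lambda,k}$, then Weyl/min--max monotonicity) is precisely the paper's argument. The only difference is that you spell out the hyperbolic inequalities $p>0$, $q<0$, $p+q>0$, which the paper omits by referring to~\cite{Kam_Sci}.
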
 

\begin{proof}
We want to follow the same strategy used in~\cite[Lemma~4.4]{Kam_Sci}. In order to do that, we need to work with a symmetric matrix. While it is true that $M_{\lambda,k}$ is not symmetric, we can easily find a symmetric matrix with the same eigenvalues, namely:
$$
M_{\lambda,k}^S\,=\,\widetilde{M}_{\lambda,k}^S
\,-\,2\,{\rm Id}\,,\quad \hbox{where } 
\widetilde{M}_{\lambda,k}^S\,=\,
\begin{bmatrix}
\frac{R^2-\lambda^2}{\lambda^2}(k\coth\omega -1) & -\frac{k}{\lambda}\frac{\sqrt{1-R^2}\sqrt{R^2-\lambda^2}}{\sinh\omega} \\
-\frac{k}{\lambda}\frac{\sqrt{1-R^2}\sqrt{R^2-\lambda^2}}{\sinh\omega} & (1-R^2)(k\coth\omega +1)
\end{bmatrix}\,.
$$
It is clear that $M_{\lambda,k}^S$ has the same trace and determinant, and thus the same eigenvalues, of $M_{\lambda,k}$. Arguing as in~\cite[Lemma~4.4]{Kam_Sci}, in order to prove that $\mu_1(\lambda,k)$ and $\mu_2(\lambda,k)$ are monotonically increasing in $k$, it is sufficient to show that the matrix
$$
\pa_k M_{\lambda,k}^S\,=\,\begin{bmatrix}
\frac{R^2-\lambda^2}{\lambda^2}\left(\coth\omega -\frac{\omega}{\sinh^2\omega}\right) & \frac{\sqrt{1-R^2}\sqrt{R^2-\lambda^2}}{\lambda}\left(\frac{\omega\cosh\omega}{\sinh^2\omega}-\frac{1}{\sinh\omega}\right) \\
\frac{\sqrt{1-R^2}\sqrt{R^2-\lambda^2}}{\lambda}\left(\frac{\omega\cosh\omega}{\sinh^2\omega}-\frac{1}{\sinh\omega}\right) & (1-R^2)\left(\coth\omega -\frac{\omega}{\sinh^2\omega}\right)
\end{bmatrix}
$$
is positive definite. This is done exactly as in~\cite{Kam_Sci}, so we avoid to give the details.
\end{proof}

\begin{figure}
\centering
\includegraphics[scale=0.5]{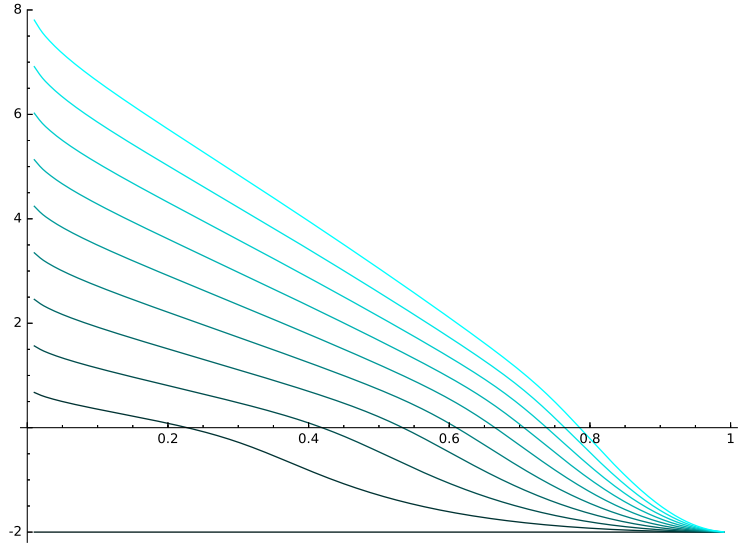}
\caption{Plot of the eigenvalue $\mu_1(\lambda,k)$ as a function of $\lambda$ for the integers $k$ ranging from $1$ (dark blue) to $10$ (light blue).}
\label{fig:mu1}
\end{figure}

In~\cite{Kam_Sci} it is also proven that the first eigenvalue $\mu_1(\lambda,k)$ is monotonically decreasing in $\lambda$. 
This seems to hold true in our framework as well (see Figure~\ref{fig:mu1}), but it appears to be harder to prove, as the explicit expression for $\pa\mu_1/\pa\lambda$ is more complicated. Luckily we do not need such a strong result, but it will be enough to show that the derivative $\pa\mu_1/\pa\lambda$ is negative at the points where $\mu_1$ vanishes. Namely, we will need the following:

\begin{lemma}
\label{le:monotonicity_lambda}
For any $k>1$, if $\lambda_k$ is such that $\mu_1(\lambda_k,k)=0$, then
$$
{\frac{\pa \mu_1}{\pa \lambda}}(\lambda_k,k)< 0\,.
$$
\end{lemma}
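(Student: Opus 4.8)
The plan is to compute $\mu_1(\lambda,k)$ along the zero locus and differentiate implicitly. Recall from~\eqref{eq:eigenv_expr} that $\mu_1=0$ is equivalent to
\begin{equation*}
T_{\lambda,k}-\sqrt{T_{\lambda,k}^2-4D_{\lambda,k}}\,=\,4\,,
\end{equation*}
which, after isolating the radical and squaring, simplifies to the clean relation $D_{\lambda,k}=2\,T_{\lambda,k}-4$ (one must check the sign condition $T_{\lambda,k}\geq 4$, which follows from~\eqref{eq:delta_estimate} together with $D_{\lambda,k}>0$ for $k>1$, the latter being immediate from~\eqref{eq:TD} since $R^2/\lambda^2>1$). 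So the first step is to establish that on the curve $\{\mu_1(\cdot,k)=0\}$ one has the identity $D_{\lambda,k}-2T_{\lambda,k}+4=0$, and conversely that this algebraic curve coincides with the zero set of $\mu_1$. This turns the problem into a statement purely about the explicit rational/transcendental expressions~\eqref{eq:TD}.

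Next I would differentiate. Writing $\mu_1=\tfrac12(T-\sqrt{T^2-4D})-2$ with $T=T_{\lambda,k}$, $D=D_{\lambda,k}$, one gets
\begin{equation*}
\frac{\pa\mu_1}{\pa\lambda}\,=\,\frac12\left(\pa_\lambda T-\frac{T\,\pa_\lambda T-2\,\pa_\lambda D}{\sqrt{T^2-4D}}\right).
\end{equation*}
At a zero of $\mu_1$ we have $\sqrt{T^2-4D}=T-4$, so after clearing denominators the sign of $\pa\mu_1/\pa\lambda$ is the sign of $(T-4)\pa_\lambda T-(T\,\pa_\lambda T-2\,\pa_\lambda D)=\pa_\lambda D-2\,\pa_\lambda T$ (using also $D=2T-4$ to simplify). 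Thus the lemma reduces to showing that $\pa_\lambda(D_{\lambda,k}-2T_{\lambda,k})<0$ at the points where $D_{\lambda,k}-2T_{\lambda,k}+4=0$. The quantity $D-2T$ is, from~\eqref{eq:TD}, an explicit function of $\lambda$ and $k$ (with $R=R(\lambda)$ determined by~\eqref{eq:rel_lambda_m} and $\omega=-k\log\lambda$), so this is now a concrete one-variable calculus inequality.

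To finish, I would compute $\pa_\lambda(D_{\lambda,k}-2T_{\lambda,k})$ explicitly, substituting the constraint $D-2T=-4$ to eliminate the more awkward terms (in particular using it to express $k\coth\omega$ or $R^2/\lambda^2$ in terms of the remaining quantities), and reduce to checking positivity/negativity of an elementary expression in $\lambda\in(0,1)$ and $k>1$. A useful auxiliary fact here is the monotonicity in $k$ from Lemma~\ref{le:monotonicity_k}: since $\mu_1(\lambda,k)$ is increasing in $k$ and $\mu_1(\lambda,1)\equiv-2$, any zero $\lambda_k$ with $k>1$ must sit in a region where the relevant derivative $\pa_k\mu_1>0$, which pins down signs of several of the hyperbolic terms. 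Differentiating $R^2=(1-\lambda^2)/(-2\log\lambda)$ gives $\pa_\lambda(R^2)=\big(R^2/\lambda-\lambda\big)/(-\log\lambda)\cdot(\text{something positive})$, i.e. $\pa_\lambda(R^2)>0$ precisely because $R^2>\lambda^2$, and this sign feeds directly into the estimate.

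The main obstacle I expect is the last step: the explicit derivative $\pa_\lambda(D_{\lambda,k}-2T_{\lambda,k})$ involves $\pa_\lambda R^2$, the $\lambda$-derivative of $\coth\omega$ and $\omega/\sinh^2\omega$, and the coupling through $\omega=-k\log\lambda$, so the resulting expression is bulky. The key will be to use the constraint equation $D_{\lambda,k}-2T_{\lambda,k}+4=0$ aggressively to collapse terms, rather than trying to prove $\pa_\lambda(D-2T)<0$ globally (which need not even be true off the zero locus). One should also keep the comparison with~\cite[Lemma~4.5]{Kam_Sci} in mind, since the analogous statement there is proven by exactly this implicit-differentiation-on-the-zero-set device; our $T$ and $D$ differ from theirs only by the presence of $R=R(\lambda)$ in place of a constant, so the same bookkeeping should go through with the extra $\pa_\lambda R^2$ terms controlled by the sign $R^2>\lambda^2$.
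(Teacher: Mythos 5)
Your reduction is exactly the paper's first step: writing $\mu_1=\tfrac12\big(T-\sqrt{T^2-4D}\big)-2$, differentiating in $\lambda$, and using that at a zero one has $\sqrt{T^2-4D}=T-4$ (equivalently $D=2T-4$), so that the sign of $\pa\mu_1/\pa\lambda$ at $\lambda_k$ is the sign of $\pa_\lambda D-2\,\pa_\lambda T$ (your omission of the harmless positive factor $2$ aside). Up to this point the proposal is correct and coincides with the paper's argument, which records the same reduction as the condition $2T'_{\lambda_k,k}-D'_{\lambda_k,k}>0$.

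The gap is that you stop exactly where the real work begins: you never prove the reduced inequality, you only announce that one should ``compute $\pa_\lambda(D-2T)$ explicitly, substitute the constraint aggressively'' and hope the bookkeeping of the reference carries over. In the paper this step is not routine. One must compute $T'$ and $D'$ explicitly (they pick up contributions from $\pa_\lambda R^2$, which you correctly observe is positive, but that sign alone does not settle anything), use the zero-locus identity in the form $2R_k^2(1-\lambda_k^2)k\coth\omega_k=(R_k^2-\lambda_k^2)(1-R_k^2)k^2+(R_k^2+\lambda_k^2)(1+R_k^2)$ to eliminate $k\coth\omega_k$ where possible, and reduce to an inequality whose zero-order term is handled by $R_k>\lambda_k$; even then the remaining terms are not obviously positive, and the paper needs an additional \emph{strict lower bound} $k\coth\omega_k>(R_k^2+\lambda_k^2)/\big(\lambda_k^2(1-R_k^2)\big)$, extracted from the discriminant estimate~\eqref{eq:delta_estimate} combined with $T=4+\sqrt{T^2-4D}$ at the zero, followed by a final algebraic positivity check resting on $R_k^2+R_k^2\lambda_k^2-2\lambda_k^2>0$. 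None of these ingredients appears in your outline, and the auxiliary fact you propose instead (monotonicity in $k$ from Lemma~\ref{le:monotonicity_k} ``pinning down signs of the hyperbolic terms'') plays no role in the actual proof and would not substitute for the quantitative estimate on $k\coth\omega_k$. So the proposal correctly identifies the implicit-differentiation framework but leaves the essential estimate unproved.
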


\begin{proof}
Denote by $D'_{\lambda,k}$, $T'_{\lambda,k}$ the derivatives of $D_{\lambda,k}$, $T_{\lambda,k}$ with respect to $\lambda$.
Recalling the explicit expression~\eqref{eq:eigenv_expr} of $\mu_1(\lambda,k)$ and differentiating, we find that $\pa\mu_1/\pa\lambda< 0$ is equivalent to
$$
T'_{\lambda,k}-\frac{T_{\lambda,k}\,T'_{\lambda,k}-2\,D'_{\lambda,k}}{\sqrt{T_{\lambda,k}^2-4D_{\lambda,k}}}\,<\,0\,,
$$
which can be rewritten as
\begin{equation}
\label{eq:cond1}
\left(\sqrt{T_{\lambda,k}^2-4D_{\lambda,k}}-T_{\lambda,k}\right)T'_{\lambda,k}+2D'_{\lambda,k}\,<\,0\,.
\end{equation} 
Using the expression~\eqref{eq:eigenv_expr} of $\mu_1$ in terms of $T_{\lambda,k}$, $ D_{\lambda,k}$, we find out that at the points where $\mu_1(\lambda,k)$ vanishes it holds
\begin{equation}
\label{eq:cond_mu=0}
\sqrt{T_{\lambda,k}^2-4\,D_{\lambda,k}}\,=\,T_{\lambda,k}-4\,, \quad\hbox{ or equivalently } \quad \frac{D_{\lambda,k}}{2}\,=\,T_{\lambda,k}-2\,.
\end{equation}
Therefore, when $\lambda=\lambda_k$, condition~\eqref{eq:cond1} can be rewritten as 
\begin{equation}
\label{eq:cond2}
2\,T'_{\lambda_k,k}\,-\,D'_{\lambda_k,k}\,>\,0\,.
\end{equation}
In order to write condition~\eqref{eq:cond2} more explicitly, we compute $T'_{\lambda,k}$ and $D'_{\lambda,k}$: 
\begin{align*}
T'_{\lambda,k}\,&=\,R^2\frac{1-\lambda^2}{\lambda^3}\frac{k^2}{\sinh^2\omega}\,-\,2\,\frac{k}{\lambda}\,\coth\omega\,+\,2\,\frac{1-2R^2+\lambda^2}{\lambda(1-\lambda^2)}
\\
&=\,\frac{R^2}{\lambda^3(1-\lambda^2)}\left[(1-\lambda^2)^2 \frac{k^2}{\sinh^2\omega}\,+\,2(R^2-\lambda^2-1)(1-\lambda^2)k\coth\omega\,+\,2(1-R^2-R^2\lambda^2+\lambda^4)\right],
\\
D'_{\lambda,k}\,&=\,-2\,R^2\,\frac{k^2-1}{\lambda^3(1-\lambda^2)}\left[(R^2-\lambda^2)^2\,+\,(1-R^2)^2\right]\,.
\end{align*}

Now we specialize these formulas at the points where $\mu_1$ vanishes. To this end, from~\eqref{eq:cond_mu=0} and the explicit expressions~\eqref{eq:TD} for $T_{\lambda,k}$ and $D_{\lambda,k}$, we get
\begin{equation}
\label{eq:cond3}
2R^2_k(1-\lambda_k^2)k\coth\omega_k\,=\,(R^2_k-\lambda_k^2)(1-R^2_k)k^2\,+\,(R^2_k+\lambda_k^2)(1+R^2_k)\,,
\end{equation}
where we have used the notation $R_k=R(\lambda_k)$ and $\omega_k=\omega(\lambda_k)$ (recall that both $R$ and $\omega$ are functions of $\lambda$).
Starting from the above formulas for $T'_{\lambda,k}$ and $D'_{\lambda,k}$, recalling that $1/\sinh^2\omega=\coth^2\omega-1$ and plugging in identity~\eqref{eq:cond3}, with some computations we can rewrite condition~\eqref{eq:cond2} as
\begin{equation}
\label{eq:cond4}
4R^2_k(1-\lambda_k^2)k^3\coth\omega_k\,+\,2(\lambda_k^2-3 R_k^4-3R^2_k\lambda_k^2-3R^2_k)k^2\,
+\,3R_k^4+R^2_k\lambda_k^2+R^2_k-\lambda_k^2 \,>\, 0\,.
\end{equation}
Since $R_k>\lambda_k$, the $0$-th order term of~\eqref{eq:cond4} is positive.
 It follows that, in order for~\eqref{eq:cond4} to hold, it is sufficient to prove
\begin{equation}
\label{eq:cond5}
2R^2_k\left(1-\lambda_k^2\right)k\coth\omega_k\,+\,\lambda_k^2-3R_k^4-3R^2_k\lambda_k^2-3R^2_k\,>\, 0\,.
\end{equation}
In order to prove this inequality, we first need an estimate for $k\coth\omega_k$. To obtain it, we start from~\eqref{eq:cond_mu=0} and recall~\eqref{eq:delta_estimate} to get
\begin{align*}
T_{\lambda_k,k}\,&=\,4+\sqrt{T_{\lambda_k,k}^2-4D_{\lambda_k,k}}
\\
&>\,4+\left(\frac{c_{\lambda_k}^i}{\lambda_k}-c_{\lambda_k}^o\right)k\coth\omega_k\,+\,\left(\frac{c_{\lambda_k}^i}{\lambda_k}+c_{\lambda_k}^o\right)\,.
\end{align*}
We can then exploit the explicit expression~\eqref{eq:TD} for $T_{\lambda,k}$ to obtain, with some computations, the following estimate for $k\coth\omega_k$:
$$
k\coth\omega_k\,>\,\frac{R^2_k+\lambda_k^2}{\lambda_k^2(1-R^2_k)}\,.
$$
We are now ready to prove~\eqref{eq:cond5}. Using the inequality for $k\coth\omega_k$ that we just found, we can estimate the left hand side of~\eqref{eq:cond5} as follows:
\begin{align*}
2R^2_k(1-\lambda_k^2)k\coth\omega_k\,+\,\lambda_k^2-3R_k^4&-3R^2_k\lambda_k^2-3R^2_k\,>
\\
&>\,\frac{2R^2_k(1-\lambda_k^2)(R^2_k+\lambda_k^2)}{\lambda_k^2(1-R^2_k)}\,+\,\lambda_k^2-3R_k^4-3R^2_k\lambda_k^2-3R^2_k
\\
&=\,\frac{2R_k^4-R^2_k\lambda_k^2+\lambda_k^4-2R_k^4\lambda_k^2-6R^2_k\lambda_k^4+
3R_k^6\lambda_k^2+3R_k^4\lambda_k^4}{\lambda_k^2(1-R^2_k)}\,.
\end{align*}
Notice that
\begin{multline*}
2R_k^4-R^2_k\lambda_k^2+\lambda_k^4-2R_k^4\lambda_k^2-6R^2_k\lambda_k^4
+3R_k^6\lambda_k^2+3R_k^4\lambda_k^4
\,=\,
\\
\,=\,R^2_k(1+3\lambda_k^2)(R^2_k+R^2_k\lambda_k^2-2\lambda_k^2)+3R^2_k\lambda_k^2(1-R^2_k)^2+(R^2_k-\lambda_k^2)^2,
\end{multline*}
and since
\begin{equation*}
R^2_k+R^2_k\lambda_k^2-2\lambda_k^2\,=\,\lambda_k^2 \left(\frac{1}{\lambda_k}c_{\lambda_k}^i-c^o_{\lambda_k}\right)>\lambda_k^2 \left(c_{\lambda_k}^i-c^o_{\lambda_k}\right)>0\,,
\end{equation*}
it follows that inequality~\eqref{eq:cond5} is in force, as wished.
\end{proof}

We are finally ready to state the main proposition, that collects all the properties of the eigenvalues $\mu_1(\lambda,k)$, $\mu_2(\lambda,k)$ that we need.

\begin{proposition}
\label{pro:main_bifurcation}
The following properties hold.
\begin{enumerate}
\item[$(i)$] For any $\lambda\in(0,1)$, we have $\mu_1(\lambda,0)<\mu_2(\lambda,0)<0$, $\mu_1(\lambda,1)=-2$ and $\mu_2(\lambda,1)=0$.
\item[$(ii)$] $\mu_2(\lambda,k)>0$ for any $\lambda\in(0,1)$ and any integer $k\geq 2$.
\item[$(iii)$] For any integer $k\geq 2$, there exists a unique value $\lambda_k\in(0,1)$ such that $\mu_1(\lambda_k,k)=0$. Furthermore $\pa\mu_1/\pa\lambda(\lambda_k,k)<0$.
\item[$(iv)$] The sequence $\{\lambda_k\}_{k\geq 2}$ is monotonically increasing with $\lim_{k\to+\infty}\lambda_k=1$.
\item[$(v)$] $\lim_{k\to+\infty}\mu_i(\lambda,k)/k$ is finite and positive for all $\lambda\in(0,1)$, for $i=1,2$.
\end{enumerate}
\end{proposition}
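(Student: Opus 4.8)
\medskip
\noindent\emph{Proof strategy.} The plan is to extract all five items with minimal effort from the two monotonicity Lemmas~\ref{le:monotonicity_k} and~\ref{le:monotonicity_lambda}, the explicit formul\ae~\eqref{eq:TD}--\eqref{eq:eigenv_expr} together with~\eqref{eq:delta_estimate}, the constraint~\eqref{eq:rel_lambda_m} (which yields $\lambda<R<1$ and hence $c_\lambda^i>c_\lambda^o$, since $\overline\tau_i(R)>\sqrt2>\overline\tau_o(R)$ by~\eqref{eq:k+}--\eqref{eq:k-}), and the boundary limits already recorded before the statement. The only genuine computation sits in item~$(v)$, which I would establish first, as item~$(iv)$ relies on it.

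For~$(v)$, fix $\lambda\in(0,1)$ and let $k\to+\infty$. Then $\omega=-k\log\lambda\to+\infty$, so $\coth\omega\to1$ and $k/\sinh\omega\to0$; reading off the leading behaviour in~\eqref{eq:TD} gives $T_{\lambda,k}/k\to t:=R^2(\lambda^{-2}-1)>0$ and $D_{\lambda,k}/k^2\to d:=(R^2\lambda^{-2}-1)(1-R^2)>0$, both strict by $\lambda<R<1$. Dividing~\eqref{eq:delta_estimate} by $k^2$ and passing to the limit yields $(T_{\lambda,k}^2-4D_{\lambda,k})/k^2\to(c_\lambda^i/\lambda-c_\lambda^o)^2=t^2-4d\geq0$, so $\sqrt{T_{\lambda,k}^2-4D_{\lambda,k}}/k$ converges to $\sqrt{t^2-4d}<t$. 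From~\eqref{eq:eigenv_expr} it then follows that $\mu_1(\lambda,k)/k\to\tfrac12\bigl(t-\sqrt{t^2-4d}\bigr)$ and $\mu_2(\lambda,k)/k\to\tfrac12\bigl(t+\sqrt{t^2-4d}\bigr)$, both finite; positivity is immediate, since $t>0$ makes the second limit positive and $d>0$ forces $\sqrt{t^2-4d}<t$, hence the first limit positive as well. (A short algebraic simplification, using $c_\lambda^i/\lambda-c_\lambda^o=R^2\lambda^{-2}-1-(1-R^2)$ and~\eqref{eq:rel_lambda_m}, even identifies the two limits as $1-R^2$ and $R^2\lambda^{-2}-1$.)

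For~$(i)$ and~$(ii)$: the values $\mu_1(\lambda,1)=-2$, $\mu_2(\lambda,1)=0$ are the explicit ones already computed before the statement, and since $D_{\lambda,0}=-(R^2\lambda^{-2}-1)(1-R^2)<0$ the discriminant $T_{\lambda,0}^2-4D_{\lambda,0}$ is strictly positive, so $\mu_1(\lambda,0)<\mu_2(\lambda,0)$. By Lemma~\ref{le:monotonicity_k} the map $k\mapsto\mu_2(\lambda,k)$ is strictly increasing on $(0,+\infty)$ and continuous up to $k=0$, whence $\mu_2(\lambda,0)<\mu_2(\lambda,1)=0$ (closing~$(i)$) and $\mu_2(\lambda,k)>\mu_2(\lambda,1)=0$ for every real $k>1$, in particular for every integer $k\geq2$ (closing~$(ii)$). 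For~$(iii)$: fix an integer $k\geq2$; the analytic function $\lambda\mapsto\mu_1(\lambda,k)$ tends to $k-1>0$ as $\lambda\to0^+$ and to $-2<0$ as $\lambda\to1^-$, so the intermediate value theorem produces a zero $\lambda_k\in(0,1)$, at which $\tfrac{\pa\mu_1}{\pa\lambda}<0$ by Lemma~\ref{le:monotonicity_lambda} (indeed the same lemma gives $\tfrac{\pa\mu_1}{\pa\lambda}<0$ at \emph{every} zero). Uniqueness then follows by the standard argument: two consecutive zeros $a<b$ would force $\mu_1(\cdot,k)$ to be negative just to the right of $a$ and positive just to the left of $b$, hence to vanish at some $c\in(a,b)$ with $\tfrac{\pa\mu_1}{\pa\lambda}(c,k)\geq0$, contradicting Lemma~\ref{le:monotonicity_lambda}. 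Consequently $\lambda_k$ is the unique zero, $\mu_1(\cdot,k)>0$ on $(0,\lambda_k)$ and $\mu_1(\cdot,k)<0$ on $(\lambda_k,1)$.

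Finally, for~$(iv)$: given integers $2\leq k<k'$, Lemma~\ref{le:monotonicity_k} gives $\mu_1(\lambda_k,k')>\mu_1(\lambda_k,k)=0$, so $\lambda_k$ lies in the positivity interval $(0,\lambda_{k'})$ of $\mu_1(\cdot,k')$, that is $\lambda_k<\lambda_{k'}$; hence $\{\lambda_k\}_{k\geq2}$ is strictly increasing and converges to some $\bar\lambda\in(0,1]$. If $\bar\lambda<1$, then by~$(v)$ we would have $\mu_1(\bar\lambda,m)\to+\infty$, hence $\mu_1(\bar\lambda,m)>0$ for $m$ large; but $\bar\lambda>\lambda_m$ for all $m$, so $\bar\lambda$ belongs to the negativity interval $(\lambda_m,1)$ of $\mu_1(\cdot,m)$, forcing $\mu_1(\bar\lambda,m)<0$, a contradiction. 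Thus $\bar\lambda=1$. The only point demanding real care, beyond bookkeeping, is precisely this last step of~$(iv)$, where the $k$-monotonicity of Lemma~\ref{le:monotonicity_k}, the sign pattern of $\mu_1(\cdot,k)$ extracted from Lemma~\ref{le:monotonicity_lambda}, and the asymptotics of~$(v)$ must all be combined; everything else reduces to the explicit formul\ae\ and results already at hand.
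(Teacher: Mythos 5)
Your proposal is correct, and for items $(i)$--$(iii)$ and $(v)$ it essentially coincides with the paper's argument: $(i)$--$(ii)$ from the explicit values at $k=1$ plus the $k$-monotonicity of Lemma~\ref{le:monotonicity_k} (your extra remark that $D_{\lambda,0}<0$ forces $\mu_1(\lambda,0)<\mu_2(\lambda,0)$ is a welcome detail the paper leaves implicit), $(iii)$ from the boundary limits $k-1$ and $-2$ plus Lemma~\ref{le:monotonicity_lambda} exactly as in the text, and $(v)$ by the same asymptotics of $T_{\lambda,k}$ and $D_{\lambda,k}$ — you normalize by $k^2$ and use~\eqref{eq:delta_estimate} where the paper computes $D_{\lambda,k}/T_{\lambda,k}^2$, which is the same computation, and your identification of the limits as $1-R^2$ and $R^2\lambda^{-2}-1$ is a nice bonus. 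The genuine divergence is in the second half of $(iv)$: the paper rules out $\lim\lambda_k=\lambda_\infty<1$ by evaluating the vanishing condition~\eqref{eq:cond_mu=0} at $\lambda_k$, i.e.\ identity~\eqref{eq:lambda_k_cond}, and noting that its left-hand side stays bounded while the right-hand side grows linearly in $k$; you instead prove $(v)$ first and combine it with the sign pattern of $\mu_1(\cdot,m)$ from $(iii)$ (negative on $(\lambda_m,1)$) to get $\mu_1(\bar\lambda,m)<0$ for all $m$ against $\mu_1(\bar\lambda,m)\to+\infty$. This is a legitimate, arguably softer route: it avoids any further explicit manipulation of~\eqref{eq:cond3} at the cost of reordering the items (no circularity arises, since your proof of $(v)$ uses only the asymptotics of $T$ and $D$ for fixed $\lambda$), whereas the paper's version is self-contained within $(iv)$ and does not need $(v)$ at all. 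One cosmetic wrinkle: in the uniqueness step of $(iii)$ the phrase ``two \emph{consecutive} zeros $a<b$ ... hence to vanish at some $c\in(a,b)$'' is internally awkward (consecutiveness excludes such a $c$); simply drop ``consecutive'' and argue that a sign change from negative (just right of $a$) to positive (just left of $b$) produces a zero with nonnegative $\lambda$-derivative, contradicting Lemma~\ref{le:monotonicity_lambda} — which is exactly the paper's ``once negative, it cannot become positive again''.
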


\begin{proof}
We have already noticed that $\mu_1(\lambda,1)=-2$ and $\mu_2(\lambda,1)=0$ for all $\lambda\in(0,1)$. Since we also know from Lemma~\ref{le:monotonicity_k} that $\mu_1$ and $\mu_2$ are strictly increasing in $k$, points $(i)$ and $(ii)$ follow at once. 

Concerning the first eigenvalue $\mu_1$, we have already shown that for any $k\geq 2$ it holds 
$$
\lim_{\lambda\to 0}\mu_1(\lambda,k)\,=\,k-1\,>\,0\,,\quad \hbox{ and }\quad\lim_{\lambda\to 1}\mu_1(\lambda,k)\,=\,-2\,<\,0\,.
$$
Since $\mu_1$ is a continuous function of $\lambda$ for any fixed $k\geq 2$, it follows that there exists at least one value $\lambda_k\in(0,1)$ such that $\mu_1(\lambda_k,k)=0$. On the other hand, we know from Lemma~\ref{le:monotonicity_lambda} that the derivative of $\mu_1$ with respect to $\lambda$ has to be strictly negative at each point where $\mu_1$ vanishes. It follows that, once the function $\mu_1$ becomes negative, it cannot become positive again for larger values of $\lambda$. In other words, there can only be a single value $\lambda=\lambda_k$ at which $\mu_1$ vanishes. This proves point $(iii)$ of the proposition. 

The fact that $\{\lambda_k\}_{k\geq 2}$ is monotonically increasing follows immediately from Lemma~\ref{le:monotonicity_k}, thus, in order to prove point $(iv)$, it is enough to show that $\lambda_k$ goes to $1$ as $k\to\infty$. To do this, we argue by contradiction. Suppose that $\lim_{k\to\infty}\lambda_k=\lambda_\infty<1$ (notice that the limit exists because $\{\lambda_k\}$ is a monotone sequence). Recalling that formula~\eqref{eq:cond3} is in force at the point $\lambda=\lambda_k$, recalling also that $R$ and $\omega$ are both functions of $\lambda$ and setting $\omega_k=\omega(\lambda_k)$ and $R_k=R(\lambda_k)$, we have that it must hold
\begin{equation}
\label{eq:lambda_k_cond}
2R^2_k(1-\lambda_k^2)\coth\omega_k\,=\,(R^2_k-\lambda_k^2)(1-R^2_k)k\,+\,(R^2_k+\lambda_k^2)(1+R^2_k)\frac{1}{k}\,.
\end{equation}
Since $\lambda_k$ converges to a value $\lambda_\infty<1$,  it follows that $R_k$ also converges to a value $R_\infty$ such that $\lambda_\infty<R_\infty<1$, whereas $\coth\omega_k=(1+\lambda_k^{2k})/(1-\lambda_k^{2k})\to 1$ as $k\to +\infty$. As a consequence, the left hand side of~\eqref{eq:lambda_k_cond} converges to a finite value as $k\to\infty$, whereas the right hand side goes to infinity. This is a contradiction, as wished.

Finally, it remains to prove point $(v)$.
Recalling~\eqref{eq:eigenv_expr}, we have
\begin{align*}
\lim_{k\to +\infty}\frac{\mu_1(\lambda,k)}{k}\,&=\,\frac{1}{2}\lim_{k\to +\infty}\left\{\frac{T_{\lambda,k}}{k}\left[1-\sqrt{1-4\frac{D_{\lambda,k}}{T^2_{\lambda,k}}}\right]\right\}\,,
\\
\lim_{k\to +\infty}\frac{\mu_2(\lambda,k)}{k}\,&=\,\frac{1}{2}\lim_{k\to +\infty}\left\{\frac{T_{\lambda,k}}{k}\left[1+\sqrt{1-4\frac{D_{\lambda,k}}{T^2_{\lambda,k}}}\right]\right\}\,,
\end{align*} 
We now notice that for any fixed $\lambda$ it holds $\coth\omega=(1+\lambda^{2k})/(1-\lambda^{2k})\to 1$ as $k\to+\infty$. As a consequence, from the explicit expressions~\eqref{eq:TD} for $T_{\lambda,k}$ and $D_{\lambda,k}$, we easily compute
$$
\lim_{k\to+\infty}\frac{T_{\lambda,k}}{k}\,=\,\frac{R^2(1-\lambda^2)}{\lambda^2}\,, \quad\hbox{ and }\quad
\lim_{k\to+\infty}\frac{D_{\lambda,k}}{T^2_{\lambda,k}}\,=\,\frac{\lambda^2(R^2-\lambda^2)(1-R^2)}{R^4(1-\lambda^2)^2}\,.
$$
The desired result follows easily.
\end{proof}

The properties described in Proposition~\ref{pro:main_bifurcation} are the ones needed in order to be able to invoke the Crandall-Rabinowitz Bifurcation Theorem~\cite{Cra_Rab} to prove Theorem~\ref{thm:bifurcation}. Now the proof follows exactly the same strategy highlighed in~\cite[Section~5]{Kam_Sci}. Let us just recall briefly the main steps to show how the properties $(i)$-$(v)$ of the eigenvalues come into play. 

The first step is to restrict the functional $F_\lambda$ to functions invariant under a suitable group $G$ of isometries of $\R^2$. We can choose as $G$ any subgroup of the orthogonal group $O(2)$ such that the eigenvalues $\{\sigma_i\}_{i\in\N_0}$ of $-\Delta_{\Sph^1}$ have multiplicity $1$ and satisfy $\sigma_0=0$, $\sigma_1>1$. For instance, one possible choice is the group $G\cong\mathbb{Z}^2\times\mathbb{Z}^2$ acting on $\R^2$ by reflections along the two coordinate axes, in which case the eigenvalues form the sequence $\{(2i)^2\}_{i\in\N_0}$, corresponding to the eigenfunctions $\cos(2i\theta)$.

Let us fix such a $G$ once and for all, and let $\{\sigma_i\}_{i\in\N_0}$ be the corresponding sequence of eigenvalues. For every $i\in\N_0$ let $Y_i$ be the unique $G$-invariant unit $L^2(\Sph^1)$-norm  eigenfunction of $-\Delta_{\Sph^1}$ corresponding to the eigenvalue $\sigma_i$ and let $W_i={\rm Span}\{(Y_i,0),(0,Y_i)\}$. 

Fix now $k\in\N$. 
Recall from Proposition~\ref{pro:main_bifurcation}-($iii$) that there exists one value $\lambda_{\sigma_k}$ such that $\mu_1(\lambda_{\sigma_k},\sigma_k)=0$, let $F_k:=F_{\lambda_{\sigma_k}}$ be the operator defined as in~\eqref{eq:F} and let $L_k:=L_{\lambda_{\sigma_k}}$ be its linearization. Let us also denote by $\mathscr{C}_G^{\kappa,\alpha}(\Sph^1)$ the H\"older space of $\mathscr{C}^{\kappa,\alpha}(\Sph^1)$-functions that are $G$-invariant.
It is easily seen that the image of $G$-invariant functions via $F_k$ and $L_k$ is still $G$-invariant. We can then consider the restrictions
\begin{align*}
F_k&:\ \ \ \ \ \ \ U\ \ \ \ \ \ \longrightarrow \ \big(\mathscr{C}_G^{1,\alpha}(\Sph^1)\big)^2\,,
\\
L_k&:\big(\mathscr{C}_G^{2,\alpha}(\Sph^1)\big)^2\longrightarrow\  \big(\mathscr{C}_G^{1,\alpha}(\Sph^1)\big)^2\,,
\end{align*}
where $U\subseteq \big(\mathscr{C}_G^{2,\alpha}(\Sph^1)\big)^2$ is a small neighborhood of ${\bf 0}$.
In order to apply the Bifurcation Theorem to $F_k$, it is sufficient to show that ${\rm Ker}(L_k)$ has dimension $1$, that ${\rm Im}(L_k)$ is closed with codimension $1$ and that $\pa L_\lambda/\pa\lambda_{|_{\lambda=\lambda_{\sigma_k}}}({\bf z})\not\in{\rm Im}(L_k)$, where ${\bf z}$ is an element that spans the kernel of $L_k$. 

For $i\in\N_0$, let ${\bf z}_{i,1}$, ${\bf z}_{i,2}$ be eigenvectors of ${L_k}_{|_{W_i}}$ relative to the eigenvalues $\mu_1(\lambda_{\sigma_k},\sigma_i)$, $\mu_2(\lambda_{\sigma_k},\sigma_i)$ and orthonormal with respect to the scalar product $\langle\cdot\,,\,\cdot\rangle_{\lambda_{\sigma_k}}$ defined as in~\eqref{eq:scal_prod}. Denote $H_G^s(\Sph^1):=H^s(\Sph^1)\cap L_G^2(\Sph^1)$, where $L_G^2(\Sph^1)$ is the space of $G$-invariant $L^2(\Sph^1)$-integrable functions, and consider the map $(H_G^2(\Sph^1))^2\to(H_G^1(\Sph^1))^2$
\begin{equation}
\label{eq:map} 
\sum_{\ell=0}^\infty\big(a_{\ell,1}\, {\bf z}_{\ell,1}+a_{\ell,2}\, {\bf z}_{\ell,2}\big)\,\mapsto\,\sum_{\ell=0}^\infty\big(a_{\ell,1}\,\mu_1(\lambda_{\sigma_k},\sigma_\ell)\, {\bf z}_{\ell,1}+a_{\ell,2}\,\mu_2(\lambda_{\sigma_k},\sigma_\ell)\, {\bf z}_{\ell,2}\big)\,,
\end{equation}
This map coincides with $L_k$ on its domain, hence it defines an extension of $L_k$. Recall that the Sobolev norm on $H^s(\Sph^1)$ is equivalent to the norm
$$
\| f\|\,:=\,\sum_{j=0}^\infty(1+j^2)^s\,\|P_j(f)\|^2_{L^2}\,,
$$ 
where $P_j$ is the $L^2$-orthogonal projection on the subspace generated by spherical harmonics of degree $j$.
It follows then easily from the asymptotic behaviour of $\mu_1$ and $\mu_2$ proven in Proposition~\ref{pro:main_bifurcation}-$(v)$ that~\eqref{eq:map} is a continuous mapping. Furthermore, from Proposition~\ref{pro:main_bifurcation}-$(i),(ii),(iv)$, recalling that $\sigma_0=0$ and $\sigma_1>1$, it is clear that both  $\mu_1(\lambda_{\sigma_k},\sigma_i)$ and $\mu_2(\lambda_{\sigma_k},\sigma_i)$ are different from zero for every $i\in\N_0$ with the only exception of $\mu_1(\lambda_{\sigma_k},\sigma_k)$. As a consequence, we can write down the right inverse of $L_k$ as
$$
\sum_{\ell=0}^\infty\big(b_{\ell,1}\, {\bf z}_{\ell,1}+b_{\ell,2}\, {\bf z}_{\ell,2}\big)\,\mapsto\,\sum_{\ell=0,\ \ell\neq k}^\infty\Big(\frac{b_{\ell,1}}{\mu_1(\lambda_{\sigma_k},\sigma_\ell)}\,{\bf z}_{\ell,1}+\frac{b_{\ell,2}}{\mu_2(\lambda_{\sigma_k},\sigma_\ell)}\, {\bf z}_{\ell,2}\Big)\,+\,\frac{b_{\ell,2}}{\mu_2(\lambda_{\sigma_k},\sigma_k)}\, {\bf z}_{k,2}\,.
$$
Again from  Proposition~\ref{pro:main_bifurcation}-$(v)$ we deduce that this inverse is also continuous.
It follows that the map~\eqref{eq:map}, restricted to elements ${\bf v}$ satisfying $\langle {\bf v},{\bf z}_{k,1}\rangle_{\lambda_{\sigma_k}}=0$, is an isomorphism.
Since~\eqref{eq:map} is an extension of $L_k$, we can then expect, and indeed it can be proven with some work, that $L_k$ is an isomorphism as well when we restrict to elements orthogonal to ${\bf z}_{k,1}$. It follows immediately that ${\rm Ker}(L_k)$ has dimension $1$, generated by ${\bf z}_{k,1}$, whereas ${\rm Im}(L_k)$ is the space orthogonal to ${\bf z}_{k,1}$ with respect to the scalar product $\langle\cdot,\cdot\rangle_{\lambda_{\sigma_k}}$, and thus it is closed with codimension $1$. Finally, we have
$$
{\frac{\pa L_\lambda}{\pa\lambda}}_{|_{\lambda=\lambda_{\sigma_k}}}({\bf z}_{k,1})\,=\,\frac{\pa \mu_1}{\pa\lambda}(\lambda_{\sigma_k},\sigma_k)\,{\bf z}_{k,1}\,,
$$
which does not belong to ${\rm Im}(L_{\lambda_k})$ thanks to Proposition~\ref{pro:main_bifurcation}-$(iii)$. These are the properties that we needed in order to invoke the Crandall-Rabinowitz Bifurcation Theorem~\cite{Cra_Rab} (see also~\cite[Theorem~7.1]{Kam_Sci}), which tells us that, for sufficiently small $\ep>0$, $\delta>0$, there exists a smooth curve
\begin{align*}
(-\ep,\ep)&\longrightarrow\left(\mathscr{C}^{2,\alpha}_G(\Sph^1)\right)^2\times (\lambda_{\sigma_k}-\delta,\lambda_{\sigma_k}+\delta)
\\
s\ \ \ &\longmapsto\ \ \ \ \ \ \ \ \ ({\bf w}(s)\,,\,\lambda(s))
\end{align*}
with ${\bf w}(0)={\bf 0}$, $\lambda(0)=\lambda_{\sigma_k}$, $\langle{\bf w}(s)\,,\,{\bf z}_{k,1}\rangle_{\lambda_{\sigma_k}}= 0$ and $F_{\lambda(s)}({\bf v}(s))=0$, where ${\bf v}(s)=s({\bf w}(s)+{\bf z}_{k,1})$. It follows that the corresponding functions $u_{\lambda(s)}^{{\bf v}(s)}:\Omega_{\lambda(s)}^{{\bf v}(s)}\to\R$ form a $1$-parameter family of solutions to problem~\eqref{eq:prob_SD} having gradient constantly equal to $c_{\lambda(s)}^o$ on the outer boundary component and constantly equal to $c_{\lambda(s)}^i$ on the inner boundary component. This proves Theorem~\ref{thm:A_general}.

%\begin{theorem}
%\label{thm:A_general}
%Let $\alpha\in(0,1)$ and let $G$ be defined as above. 
%There is a strictly increasing sequence $\{\lambda_\sigma\}_{\sigma=1}^\infty$ of positive real numbers with $\lim_{\sigma\to+\infty}\lambda_\sigma=1$ such that, for sufficiently small $\ep>0$, $\delta>0$, there exists a $\mathscr{C}^1$-curve
%\begin{align*}
%(-\ep,\ep)&\longrightarrow\left(\mathscr{C}^{2,\alpha}_G(\Sph^1)\right)^2\times (\lambda_{\sigma_k}-\delta,\lambda_{\sigma_k}+\delta)
%\\
%s\ \ \ &\longmapsto\ \ \ \ \ \ \ \ \ ({\bf v}(s)\,,\,\lambda(s))
%\end{align*}
%with ${\bf v}(0)={\bf 0}$, $\lambda(0)=\lambda_{\sigma_k}$, $\langle{\bf v}(s)\,,\,{\bf z}_{k,1}\rangle_{\lambda_{\sigma_k}}\neq 0$ and $F_{\lambda(s)}({\bf v}(s))=0$. 
%\end{theorem}

%%%%%%%%%%%%%%%%%%%%%%%%%%%%%%%%%%%%%%%%%%%%%%%
%%%%%%%%%%%%%%%%%%%%%%%%%%%%%%%%%%%%%%%%%%%%%%%

\renewcommand{\theequation}{A-\arabic{equation}}
\renewcommand{\thesection}{A}
\setcounter{equation}{0}  % reset counter
\setcounter{theorem}{0}  % reset counter

\section*{Appendix: expansions of $W$ and $W_R$ about \texorpdfstring{${\rm MAX}(u)$}{MAX(u)}}  
\label{sec:expansion}

%%%%%%%%%%%%%%%%%%%%%%%%%%%%%%%%%%%%%%%%%%%%%%%
%%%%%%%%%%%%%%%%%%%%%%%%%%%%%%%%%%%%%%%%%%%%%%%

In this appendix, we collect some basic, though very important expansions of the functions $W = |\na u|^2$ and $W_R= |\na u_R|^2 \circ \Psi$ about ${\rm MAX}(u)$. They have been invoked in the proofs of Theorem~\ref{thm:min_pr_SD} and of Proposition~\ref{pro:area_lower_bound_mon}.

\begin{lemma}
\label{le:Wm_umax}
Let $(\Om,u)$ be a solution to problem~\eqref{eq:prob_SD}, let $N$ be a connected component of $\Omega\setminus{\rm MAX}(u)$,  and let $R = R(N) \in [0,1)$ be the expected core radius associated with the region $N$. Also assume that  Normalization~\ref{norm} is in force. Then for every $p\in{\rm MAX}(u)$ it holds
$$
\lim_{x\to p,\,x\in N}\,\frac{W_R}{\umax-u}\,=\,4\,,
$$
where $W_R$ be the function defined in $N$ by~\eqref{eq:Wm}.
\end{lemma}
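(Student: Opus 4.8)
The plan is to compute both quantities near a maximum point $p$ and compare their leading-order behaviour. First I would recall from the definition~\eqref{eq:Wm} that $W_R = \left(\dfrac{\Psi^2-R^2}{\Psi}\right)^2$, where $\Psi = \psi\circ u$ and $\psi = \psi_\pm$ satisfies the ODE relations~\eqref{eq:dpsideu_SD}, in particular $\dot\psi = -\dfrac{\psi}{\psi^2-R^2}$. Since as $x\to p$ we have $u(x)\to\umax$, and since $\psi_\pm((u_R)_{\max}) = R$ by the very construction of $\psi_\pm$ (both branches meet at $\psi=R$, where $\partial F_R/\partial\psi = 0$), it follows that $\Psi\to R$ along $N$. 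Thus $\Psi^2 - R^2 \to 0$ and $W_R\to 0$, consistently with the $0/0$ indeterminacy of the claimed limit.

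The key computation is a Taylor expansion of the single-variable function $v\mapsto \left(\dfrac{\psi(v)^2-R^2}{\psi(v)}\right)^2$ as $v\to (u_R)_{\max}^-$. Writing $\delta = (u_R)_{\max} - v$, I would expand $\psi(v)$ around $\psi = R$. From~\eqref{eq:def_Psi_SD}, $v = \dfrac{1-\psi^2+2R^2\log\psi}{2}$, so differentiating (or directly using the explicit relation) gives $(u_R)_{\max} - v = -\dfrac12\big(1-\psi^2+2R^2\log\psi\big) + \text{const}$; more cleanly, expanding the right-hand side about $\psi = R$: with $\psi = R + \eta$, one gets $1-\psi^2 + 2R^2\log\psi = 1 - R^2 + 2R^2\log R - \eta^2 + O(\eta^3)$ (the linear term vanishes precisely because $\partial F_R/\partial\psi = 0$ at $\psi = R$), hence $\umax - u = \delta = \tfrac12\eta^2 + O(\eta^3)$, i.e. $\psi^2 - R^2 = 2R\eta + O(\eta^2)$ with $\eta^2 = 2\delta + O(\delta^{3/2})$. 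Therefore
\begin{equation*}
W_R = \left(\frac{\psi^2-R^2}{\psi}\right)^{\!2} = \frac{(2R\eta)^2}{R^2}\big(1+O(\eta)\big) = 4\eta^2\big(1+O(\eta)\big) = 8\delta\big(1+O(\sqrt\delta)\big).
\end{equation*}
Wait — this gives $W_R/(\umax-u)\to 8$, not $4$; I must double-check the normalisation. Rechecking: $\umax - u = \tfrac12\eta^2 + O(\eta^3)$ means $\eta^2 = 2(\umax-u) + O((\umax-u)^{3/2})$, so $W_R = 4\eta^2(1+O(\eta)) = 8(\umax-u)(1+\cdots)$. The discrepancy suggests I should instead use $\eta^2 = (\umax - u) + o(\umax-u)$, which would follow if the expansion of $1-\psi^2+2R^2\log\psi$ about $\psi=R$ has second derivative giving coefficient $-2$ rather than $-1$; indeed $\frac{d^2}{d\psi^2}(1-\psi^2+2R^2\log\psi) = -2 - 2R^2/\psi^2$, which at $\psi=R$ equals $-4$, so $1-\psi^2+2R^2\log\psi = (1-R^2+2R^2\log R) - 2\eta^2 + O(\eta^3)$ and hence $\umax - u = \eta^2 + O(\eta^3)$, giving $W_R = 4\eta^2(1+O(\eta)) = 4(\umax-u)(1+o(1))$, as claimed.

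So the clean steps are: (1) record $\Psi\to R$ along $N$ as $x\to p$, using that $\psi_\pm((u_R)_{\max}) = R$; (2) Taylor-expand $g(\psi) := 1-\psi^2+2R^2\log\psi$ about $\psi = R$, noting $g'(R) = 0$ and $g''(R) = -4$, to obtain $\umax - u = \eta^2 + O(\eta^3)$ where $\eta = \Psi - R$; (3) substitute into $W_R = \big((\Psi^2-R^2)/\Psi\big)^2 = \big((2R\eta + O(\eta^2))/(R+O(\eta))\big)^2 = 4\eta^2 + O(\eta^3)$; (4) divide to conclude $W_R/(\umax-u)\to 4$. The main subtlety — and the only place where care is genuinely needed — is handling the $R=0$ case (Serrin's solution), where $\Psi = \psi_+$ with $R=0$ degenerates: there $W_R = \Psi^2$, $u = (1-\Psi^2)/2$, so $\umax - u = \Psi^2/2$ and $W_R/(\umax-u) = 2$, \emph{not} $4$. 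I would handle this either by noting that Lemma~\ref{le:Wm_umax} is only invoked when $R(N) > 0$ (Serrin's solution having no interior maximum stratum, so ${\rm MAX}(u)$ is a single point and the relevant limits are along radii where the computation is anyway direct), or by restricting the statement as written; in the generic case $0 < R < 1$ the expansion above is uniform in a neighbourhood of the compact set ${\rm MAX}(u)\cap\overline N$ because the expansion of $g$ depends only on $R$, not on $p$.
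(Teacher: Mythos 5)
Your corrected computation is right and is essentially the paper's own proof: the paper likewise writes $W_R$, $u$ and $\umax$ explicitly in terms of $\Psi$ and $R$ and Taylor-expands about $\Psi=R$, merely using the variable $z=\Psi^2-R^2$ instead of your $\eta=\Psi-R$, which yields the slightly sharper expansion $W_R/(\umax-u)=4-\tfrac{4}{3}\,z/R^2+\mathcal{O}(z^2)$. Your caveat about $R=0$ is a fair observation (for Serrin's solution the ratio is identically $2$), but the paper's expansion carries the same implicit restriction $R>0$, and in that degenerate case the solution is already classified by Theorem~\ref{thm:PMS}, so your proposed handling is adequate.
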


\begin{proof}
Let us remember that $W_R$, $\umax$ and $u$ can be rewritten explicitly in terms of $R$ and $\Psi$ via the following formul\ae:
$$
W_R\,=\,\left(\frac{\Psi^2-R^2}{\Psi}\right)^{\!\!2}\,,\quad 2\,\umax\,=\,1\,-\,R^2\,+\,R^2\,\log\,R^2\,,\quad 2\,u\,=\,1\,-\,\Psi^2\,+\,2\,R^2\,\log\,\Psi\,.
$$
Therefore, we have
$$
\lim_{x\to p,\,x\in N}\frac{W_R}{\umax-u}\,=\,
\lim_{\Psi\to R}\,\,\frac{2\,(\Psi^2\,-\,R^2)^2}{\Psi^2\left(\Psi^2-R^2+R^2\log R^2-2R^2\log\Psi\right)}\,.
$$
Setting $z=\Psi^2-R^2$, this limit can be easily computed with the following Taylor expansion:
\begin{align}
\notag
\frac{W_R}{\umax-u}\,&=\,
\frac{2\,z^2}{(R^2+z)\left[z-R^2\log(1+\frac{z}{R^2})\right]}
\\
\notag
&=\,\frac{2\,z}{(R^2+z)\left[1-\frac{R^2}{z}(\frac{z}{R^2}-\frac{1}{2}\frac{z^2}{R^4}+\frac{1}{3}\frac{z^3}{R^6}+\mathcal{O}(z^4))\right]}
\\
\notag
&=\,\frac{2\,z}{\frac{1}{2}z+\frac{1}{6}\frac{z^2}{R^2}+\mathcal{O}(z^3)}
\\
\label{eq:est_le1}
&=\,4-\frac{4}{3}\,\frac{z}{R^2}+\mathcal{O}(z^2)\,.
\end{align}
The desired statement follows at once.
\end{proof}

Notice that in the above proof we have actually shown a more precise estimate. Let us rephrase it in a more convenient way, as it will be helpful in the proof of the next lemma. Expanding $\umax-u$ in terms of $z$, we have
$$
\umax-u\,=\,\frac{1}{4}\,\frac{z^2}{R^2}\,+\mathcal{O}(z^3)\,.
$$
Inverting this relationship yields
$$
z\,=\,\pm\,2 R\sqrt{\umax-u}\,+\,\mathcal{O}(\umax-u)\,.
$$
The sign $\pm$ appears here because $z=\Psi^2-R^2$ is positive if $\overline\tau(N)<\sqrt{2}$ and it is negative if $\overline\tau(N)>\sqrt{2}$.
Therefore, the expansion~\eqref{eq:est_le1} above can be rewritten as
\begin{align}
\notag
W_R\,&=\,4(\umax-u)-\frac{4}{3}(\umax-u)\frac{z}{R^2}+\mathcal{O}\left((\umax-u)\,z^2\right)
\\
\label{eq:Wm_umax}
&=\,4(\umax-u)\mp\frac{8}{3 R}(\umax-u)^{3/2}+\mathcal{O}\left((\umax-u)^2\right) \,,
\end{align}
where the $-$ sign holds on regions where  $\overline\tau(N)<\sqrt{2}$, and the $+$ sign holds on regions where  $\overline\tau(N) > \sqrt{2}$.

In the following lemma, we provide more refined expansions for both $W$ and $W_R$ in a neighborhood of the top stratum of ${\rm MAX}(u)$.

\begin{lemma}
\label{le:WWm_expansions}
Let $(\Om,u)$ be a solution to problem~\eqref{eq:prob_SD}, let $N$ be a connected component of $\Omega\setminus{\rm MAX}(u)$,  and let $R = R(N) \in [0,1)$ be the expected core radius associated with the region $N$. Also assume that  Normalization~\ref{norm} is in force and denote by $\Sigma_N$ the $1$-dimensional top stratum of ${\rm MAX}(u) \cap \overline{N}$. Then, at any point $p \in \Sigma_N$, it holds
\begin{align*}
W\,&=\,4\,r^2\big[1\,+\, \kappa(p)\,r\big]\,+\,\mathcal{O}(r^4)\,,
\\
W_R\,&=\,4\,r^2\,\left[1\,+\,\left(\frac{\kappa(p)}{3}\,-\,\frac{2}{3 R}\right)r\right]+\mathcal{O}(r^4)\,, \quad \hbox{if $\overline\tau(N) <\sqrt{2}$}\,,
\\
W_R\,&=\,4\,r^2\,\left[1\,+\,\left(\frac{\kappa(p)}{3}\,+\,\frac{2}{3 R}\right)r\right]+\mathcal{O}(r^4)\,, \quad \hbox{if $\overline\tau(N) >\sqrt{2}$} \, ,
\end{align*}
where $\kappa (p)$ is the curvature of $\Sigma_N$ at $p$ computed with respect to the exterior unit normal, and $r(x)={\rm dist}(x,\Sigma_N)$ denotes the distance from $\Sigma_N$.
\end{lemma}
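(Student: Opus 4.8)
The plan is to work in Fermi (normal) coordinates along the simple closed curve $\Sigma_N$ and to Taylor-expand both $W = |\na u|^2$ and $W_R = |\na u_R|^2 \circ \Psi$ in powers of the distance function $r(x) = {\rm dist}(x,\Sigma_N)$. First I would fix a point $p \in \Sigma_N$ and choose coordinates $(r,t)$ in a neighbourhood of $p$, where $t$ is an arc-length parameter along $\Sigma_N$ and $r$ is the signed distance (with the convention $r>0$ inside $N$, as recalled in the footnote). In these coordinates the Euclidean metric reads $g = dr^2 + \big(1 - \kappa(p)\, r + \mathcal{O}(r^2)\big)^2\, dt^2$, where $\kappa(p)$ is the curvature of $\Sigma_N$ at $p$ with respect to the exterior unit normal. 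Since $u$ attains its maximum along $\Sigma_N$ and $\na u = 0$ there, while $\DD u = -g_{\R^2}$ is impossible in general (only in the Serrin case), one has instead that the Hessian of $u$ restricted to $\Sigma_N$ degenerates in the tangential direction: $u = \umax - r^2 + \mathcal{O}(r^3)$ along the normal, because $\De u = -2$ forces the sum of the two eigenvalues of $\DD u$ to be $-2$ and the tangential one vanishes on $\Sigma_N$. A careful bookkeeping of the $\mathcal{O}(r^3)$ term, using $\De u = -2$ together with the expansion of the Laplacian in Fermi coordinates (which brings in the factor $\kappa(p)$ through $\pa_r \log\sqrt{\det g}$), yields
\begin{equation*}
u \,=\, \umax \,-\, r^2 \,+\, \frac{\kappa(p)}{3}\, r^3 \,+\, \mathcal{O}(r^4)\,.
\end{equation*}
Differentiating and squaring then gives $W = |\na u|^2 = (\pa_r u)^2 + (1-\kappa(p) r)^{-2}(\pa_t u)^2 = 4 r^2\big[1 + \kappa(p)\, r\big] + \mathcal{O}(r^4)$, where the tangential contribution only affects the $\mathcal{O}(r^4)$ remainder because $\pa_t u = \mathcal{O}(r^2)$. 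This establishes the first expansion.

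For the expansions of $W_R$, I would combine the refined version of Lemma~\ref{le:Wm_umax} recorded just after its proof, namely
\begin{equation*}
W_R \,=\, 4(\umax - u) \,\mp\, \frac{8}{3R}(\umax-u)^{3/2} \,+\, \mathcal{O}\big((\umax-u)^2\big)\,,
\end{equation*}
with the expansion of $\umax - u$ in powers of $r$ just obtained. Writing $\umax - u = r^2 - \tfrac{\kappa(p)}{3} r^3 + \mathcal{O}(r^4) = r^2\big(1 - \tfrac{\kappa(p)}{3} r + \mathcal{O}(r^2)\big)$, one gets $(\umax-u)^{3/2} = r^3\big(1 + \mathcal{O}(r)\big)$, hence
\begin{equation*}
W_R \,=\, 4 r^2\Big(1 - \frac{\kappa(p)}{3} r\Big) \,\mp\, \frac{8}{3R}\, r^3 \,+\, \mathcal{O}(r^4)
\,=\, 4 r^2\left[1 + \Big(\frac{\kappa(p)}{3} \mp \frac{2}{3R}\Big) r\right] + \mathcal{O}(r^4)\,,
\end{equation*}
with the $-$ sign when $\overline\tau(N) < \sqrt 2$ and the $+$ sign when $\overline\tau(N) > \sqrt 2$, exactly as in the statement. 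Here I must double-check that all error terms really are $\mathcal{O}(r^4)$ and not $\mathcal{O}(r^3)$: the odd powers of $r$ at order three are precisely the ones we are tracking, so the claim is that there is no $\mathcal{O}(r^3)$ term beyond the ones written — this follows because the only odd contributions come from the $r^3$ coefficient in $u$ and from the $(\umax-u)^{3/2}$ term in $W_R$, both of which have been accounted for.

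The main obstacle I anticipate is the rigorous justification of the expansion $u = \umax - r^2 + \tfrac{\kappa(p)}{3} r^3 + \mathcal{O}(r^4)$ uniformly near $\Sigma_N$, and in particular pinning down the $r^3$ coefficient. This requires knowing that $\Sigma_N$ is a real-analytic (or at least sufficiently smooth) curve along which $\na u$ vanishes to first order but $\DD u$ is nondegenerate in the normal direction — facts that are available from the {\L}ojasiewicz structure theory and the results of~\cite{Bor_Chr_Maz} already used in the proof of Theorem~\ref{thm:B}. One then expands $u$ in Fermi coordinates, imposes $\De u = -2$ order by order, and uses $\pa_r u|_{r=0} = 0$, $u|_{r=0} = \umax$, and $\pa_r^2 u|_{r=0} = -2$ (the latter because the tangential eigenvalue of $\DD u$ vanishes on the maximum set). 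The coefficient of $r^3$ is then forced by matching the $\mathcal{O}(r)$ term in $\De u$, which involves the mean curvature $\kappa(p)$ through the Laplace–Beltrami expansion $\De = \pa_r^2 + \big(\pa_r \log\sqrt{\det g}\big)\pa_r + \De_{\Sigma_r}$, with $\pa_r\log\sqrt{\det g}\big|_{r=0} = -\kappa(p)$. Everything else is routine algebra; the delicate part is simply to be careful that the remainders are genuinely of the claimed order and that the sign conventions for $\kappa(p)$ and for $r$ are consistent with those fixed in the statements of Propositions~\ref{pro:curv_bound_Sigma} and~\ref{pro:area_lower_bound_mon}.
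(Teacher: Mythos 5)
Your overall route is the same as the paper's: expand $u$ in powers of the distance $r$ near $\Sigma_N$, square the normal derivative to get $W$, and feed the resulting expansion of $\umax-u$ into the refined estimate~\eqref{eq:Wm_umax} to get $W_R$. The paper simply quotes the expansion of $u$ from~\cite[Theorem~3.1]{Bor_Chr_Maz} instead of re-deriving it in Fermi coordinates, but that is a cosmetic difference. The problem is a concrete sign error in your key intermediate step: with the conventions fixed in the statement ($\kappa$ computed with respect to the normal \emph{exterior} to $N$, and $r>0$ inside $N$), the correct expansion is
\begin{equation*}
u \,=\, \umax \,-\, r^2 \,-\, \frac{\kappa(p)}{3}\, r^3 \,+\, \mathcal{O}(r^4)\,,
\end{equation*}
not $u=\umax-r^2+\tfrac{\kappa(p)}{3}r^3+\mathcal{O}(r^4)$ as you wrote. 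Indeed, your own order-by-order matching gives it: writing $u=\umax+a_2r^2+a_3r^3+\dots$ and using $\pa_r\log\sqrt{\det g}\,\big|_{r=0}=-\kappa(p)$ (which is correct in your convention, since $r$ increases in the direction opposite to the exterior normal), the equation $\De u=-2$ forces $a_2=-1$ at order zero and $6a_3-2\kappa(p)a_2=0$, i.e.\ $a_3=-\kappa(p)/3$, at order one. A check on the model confirms this: in the outer region of $(\Om_R,u_R)$ one has $\kappa=-1/R$, $r=|x|-R$, and a direct Taylor expansion of $u_R$ gives the cubic coefficient $+1/(3R)=-\kappa/3$.

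Because of this, the displayed chain in your proposal is internally inconsistent: from \emph{your} expansion one would get $W=(\pa_r u)^2+\mathcal{O}(r^4)=4r^2\bigl[1-\kappa(p)\,r\bigr]+\mathcal{O}(r^4)$, not the claimed $4r^2\bigl[1+\kappa(p)\,r\bigr]$, and in the $W_R$ computation your last equality silently turns $4r^2\bigl(1-\tfrac{\kappa(p)}{3}r\bigr)\mp\tfrac{8}{3R}r^3$ into $4r^2\bigl[1+\bigl(\tfrac{\kappa(p)}{3}\mp\tfrac{2}{3R}\bigr)r\bigr]$, flipping the sign of $\kappa(p)/3$. The two slips compensate, so you land on the stated formulas, but the argument as written does not prove them. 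With the corrected cubic coefficient $-\kappa(p)/3$ (equivalently, by citing~\cite[Theorem~3.1]{Bor_Chr_Maz} as the paper does), one gets $\D u=-r\bigl[2+\kappa(p)r\bigr]\pa_r+\mathcal{O}(r^3)$, hence $W=4r^2\bigl[1+\kappa(p)r\bigr]+\mathcal{O}(r^4)$, and $\umax-u=r^2+\tfrac{\kappa(p)}{3}r^3+\mathcal{O}(r^4)$, which plugged into~\eqref{eq:Wm_umax} yields exactly the stated expansions of $W_R$; the rest of your remainder bookkeeping (tangential derivatives contributing only at order $r^4$, the $(\umax-u)^{3/2}$ term carrying the $\mp\tfrac{2}{3R}$ correction) is fine.
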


\begin{proof}
From~\cite[Theorem 3.1]{Bor_Chr_Maz} we have the following expansion:
$$
u\,=\,\umax\,-\,r^2\,-\,\frac{\kappa(p)}{3}\,r^3\,+\,\mathcal{O}(r^4)\,.
$$
In particular, $\D u$ satisfies
$$
\D u\,=\,-r\,\big[2\,+\,\kappa(p)\, r\big]\frac{\pa}{\pa r}+\mathcal{O}(r^3)
$$
and we get
$$
W\,=\,|\D u|^2\,=\,4\,r^2\,+\,4\,\kappa(p)\,r^3\,+\,\mathcal{O}(r^4)\,=\,4\,r^2\big[1\,+\, \kappa(p)\,r\big]\,+\,\mathcal{O}(r^4)\,.
$$
Concerning $W_R$, let us observe that formula~\eqref{eq:Wm_umax} provides us with an expansion of $W_R$ in terms of $\umax-u$. It follows immediately that
\begin{align*}
W_R\,&=\,4(\umax-u)\mp\frac{8}{3 R}(\umax-u)^{3/2}+\mathcal{O}\left((\umax-u)^2\right)
\\
&=\,4\,r^2\,\mp\,\frac{8}{3 R}\,r^3\,+\,\frac{4}{3}\,\kappa(p)\,r^3\,+\,\mathcal{O}(r^4)
\\
&=\,4\,r^2\,\left[1\,+\,\left(\frac{\kappa(p)}{3}\, \mp\,\frac{2}{3 R}\right)r\right]+\mathcal{O}(r^4)\,,
\end{align*}
where the sign ambiguity is the one specified in the statement. This concludes the proof.
\end{proof}

In particular, the above lemma implies that the quotient $W/W_R$ tends to one as we approach $\Sigma_N$. We will also need an integral estimate of this quantity, and that is the content of the next lemma.

\begin{lemma}
\label{le:W_Wm}
Let $(\Om,u)$ be a solution to problem~\eqref{eq:prob_SD}, let $N$ be a connected component of $\Omega\setminus{\rm MAX}(u)$,  and let $R = R(N) \in [0,1)$ be the expected core radius associated with the region $N$.
Also assume that Normalization~\ref{norm} is in force. If $\Sigma_N = \pa \overline{N} \cap {\rm MAX}(u) \neq \emptyset$ is a smooth simple closed curve, then it holds
$$
\lim_{t\to \umax^-}\int_{\{u=t\}\cap N}\sqrt{\frac{W}{W_R}}\,\rmd\sigma\,\geq\,|\Sigma_N|\,,
$$
where $W=|\D u|^2$ and $W_R$ is the function defined in $N$ by~\eqref{eq:Wm}.
\end{lemma}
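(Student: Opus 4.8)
The plan is to localise the computation near $\Sigma_N$, where the level sets of $u$ close to the level $\umax$ are controlled curves shrinking onto $\Sigma_N$, and then to combine this with the Taylor expansions recorded in Lemma~\ref{le:WWm_expansions}.

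First I would introduce Fermi coordinates $(s,r)$ in a one-sided tubular neighbourhood $\mathcal{U}\subseteq\overline{N}$ of $\Sigma_N$, with $s$ the arclength parameter along $\Sigma_N$ (so $s\in\R/|\Sigma_N|\mathbb{Z}$) and $r(x)=\dist(x,\Sigma_N)\ge 0$; since $\Sigma_N$ and $\Gamma_N$ are disjoint compact sets, $\mathcal{U}$ can be chosen so that $\overline{\mathcal U}$ does not meet $\Gamma_N$. Because $\Sigma_N\subseteq{\rm MAX}(u)$ we have $\na u=0$ and $\na^2 u\le 0$ along $\Sigma_N$, which forces the tangential eigenvalue of $\na^2u$ to vanish and $\na^2u(\nu,\nu)=\Delta u=-2$; hence \cite[Theorem~3.1]{Bor_Chr_Maz} gives $u=\umax-r^2-\frac{\kappa(s)}{3}r^3+\mathcal{O}(r^4)$, and in particular $\pa_r u=-2r+\mathcal{O}(r^2)$ and $\pa_s u=\mathcal{O}(r^3)$. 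Since $\pa_r u<0$ for $0<r<r_0$ small, and since a compactness argument (using $\overline N\cap{\rm MAX}(u)=\Sigma_N$ and $\partial\overline N=\Gamma_N\sqcup\Sigma_N$) shows that $\{u=t\}\cap N\subseteq\mathcal{U}$ once $t$ is close enough to $\umax$, the Implicit Function Theorem yields: for every such $t$ the set $\{u=t\}\cap N$ is a single smooth closed curve, the graph $\{r=\rho_t(s):s\in\R/|\Sigma_N|\mathbb{Z}\}$, with
\[
\rho_t(s)=\sqrt{\umax-t}\,\big(1+\mathcal{O}(\sqrt{\umax-t})\big),\qquad \rho_t'(s)=-\frac{\pa_s u}{\pa_r u}=\mathcal{O}(\umax-t)
\]
uniformly in $s$. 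As the Fermi metric has the form $\rmd r^2+G(s,r)^2\,\rmd s^2$ with $G(s,0)=1$, the induced length element of such a graph is $\big(1+\mathcal{O}(\sqrt{\umax-t})\big)\,\rmd s$ uniformly, whence $\big|\{u=t\}\cap N\big|\to|\Sigma_N|$ as $t\to\umax^-$.

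Next, dividing the expansion of $W$ by that of $W_R$ in Lemma~\ref{le:WWm_expansions} gives, uniformly along $\Sigma_N$,
\[
\frac{W}{W_R}\,=\,\frac{1+\kappa(p)\,r+\mathcal{O}(r^2)}{1+\big(\tfrac{\kappa(p)}{3}\mp\tfrac{2}{3R}\big)r+\mathcal{O}(r^2)}\,=\,1+\mathcal{O}(r)\qquad\text{as }r\to 0,
\]
so $\sqrt{W/W_R}=1+\mathcal{O}(r)$ there. Evaluating on the level curve $\{u=t\}\cap N$, where $r=\rho_t(s)=\mathcal{O}(\sqrt{\umax-t})$, we obtain $\sqrt{W/W_R}=1+\mathcal{O}(\sqrt{\umax-t})$ uniformly on $\{u=t\}\cap N$.

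Putting the two steps together, for $t$ close to $\umax$ we get
\[
\int_{\{u=t\}\cap N}\sqrt{\frac{W}{W_R}}\,\rmd\sigma\,=\,\big(1+\mathcal{O}(\sqrt{\umax-t})\big)\,\big|\{u=t\}\cap N\big|\,,
\]
whose limit as $t\to\umax^-$ equals $|\Sigma_N|$; in particular the stated inequality holds (in fact with equality). I expect the main obstacle to be the second half of the first paragraph — controlling the level curves, and especially showing $\rho_t'\to 0$ so that their lengths converge to $|\Sigma_N|$; this is exactly where the sharp expansion of \cite[Theorem~3.1]{Bor_Chr_Maz} enters, while the convergence $\sqrt{W/W_R}\to 1$ is an immediate consequence of Lemma~\ref{le:WWm_expansions}.
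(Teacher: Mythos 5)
Your argument is correct, and it reaches the same goal by a somewhat different route than the paper. The paper first reduces the lemma to the estimate $\limsup_{t\to\umax^-}|\{u=t\}\cap N|\geq|\Sigma_N|$ (since $W/W_R\to 1$ by Lemma~\ref{le:WWm_expansions}), and then obtains this lower bound by projecting the level sets onto $\Sigma_N$ along the flow of $\nabla r$ and checking, via the definition of Hausdorff measure, that lengths do not collapse under this projection, referring to \cite[Proposition~5.4]{Bor_Maz_2-I} for the details. You instead work in Fermi coordinates and show that, for $t$ close to $\umax$, the level set $\{u=t\}\cap N$ is a normal graph $r=\rho_t(s)$ with $\rho_t=\mathcal{O}(\sqrt{\umax-t})$ and $\rho_t'=\mathcal{O}(\umax-t)$, which yields the genuine convergence $|\{u=t\}\cap N|\to|\Sigma_N|$ and hence equality in the lemma, a stronger conclusion that the paper only mentions as possible ``with some more work''. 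The price of your route is one extra justification that you should make explicit: the bound $\partial_s u=\mathcal{O}(r^3)$ does not follow formally from the cited expansion $u=\umax-r^2-\tfrac{\kappa(s)}{3}r^3+\mathcal{O}(r^4)$, because an $\mathcal{O}(r^4)$ remainder cannot be differentiated; it does follow, uniformly in $s$ by compactness of $\Sigma_N$, from the smoothness (indeed analyticity) of $u$ in Fermi coordinates together with the identities $\partial_s u=0$, $\partial_r\partial_s u=0$ and $\partial_r^2\partial_s u=\partial_s(-2)=0$ along $\{r=0\}$, which hold since $u\equiv\umax$, $\nabla u\equiv 0$ and $\nabla^2u(\nu,\nu)\equiv-2$ on $\Sigma_N$. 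Similarly, you implicitly use that the expansions of Lemma~\ref{le:WWm_expansions} are uniform over $\Sigma_N$; this is true (again by compactness and smoothness of the data entering \cite[Theorem~3.1]{Bor_Chr_Maz}) and is also tacitly needed in the paper's proof, but it is worth stating. With these two remarks added, your proof is complete and in fact sharper than what the lemma asserts.
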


\begin{proof}
Since we know from Lemma~\ref{le:WWm_expansions} that $W/W_R$ tends to $1$, it is enough to prove that 
$$
\limsup_{t\to\umax^-}|\{u=t\}\cap N|\,\geq\,|\Sigma_N|\,.
$$
The proof of this inequality follows exactly the same strategy employed in~\cite[Proposition~5.4]{Bor_Maz_2-I}, so we leave the details to the interested reader. The idea is that, since $\Sigma_N$ is a compact smooth hypersurface, the (signed) distance function $r$ is also smooth in a sufficiently small neighborhood and we can project $\{u=t\}\cap N$ onto $\Sigma_N$ using the flow of $\D r$. We can then prove the bound on $|\Sigma_N|$ directly from the definition of Hausdorff measure, showing that the length of the curves does not collapse along the projection.
\end{proof}

With some more work, one can actually prove that the equality holds in Lemma~\ref{le:W_Wm} by showing that $\D u/|\D u|$ is smooth in a neighborhood of $\Sigma_N$ and using its gradient flow to show that $|\{u=t\}\cap N|$ converges to $|\Sigma_N|$. However, the inequality is sufficient for the purposes of the present manuscript.

\subsection*{Acknowledgements}
{\em The authors would like to thank X. Cabr\'e, R. Magnanini,  A. Roncoroni, L. Sciaraffia for their interest in our work and for stimulating discussions during the preparation of the manuscript. The authors are members of the Gruppo Nazionale per l'Analisi Matematica, la Probabilit\`a e le loro Applicazioni (GNAMPA) of the Istituto Nazionale di Alta Matematica (INdAM).
% and are partially founded by the GNAMPA Project ``Principi di fattorizzazione, formule di monotonia e disuguaglianze geometriche''.
}

\bibliographystyle{abbrv}
\bibliography{biblio}

\end{document}